\renewcommand{\sl}{\mathfrak{sl}}
\newcommand{\asl}{\widehat{\sl}}
\newcommand{\Tr}{\text{Tr}}
\newcommand{\hh}{\mathfrak{h}}
\newcommand{\wtilde}{\widetilde}
\newcommand{\CC}{\mathbb{C}}
\newcommand{\ZZ}{\mathbb{Z}}
\newcommand{\eps}{\varepsilon}
\newcommand{\Imm}{\text{Im}}
\newcommand{\wsl}{\wtilde{\sl}}
\newcommand{\whh}{\wtilde{\hh}}
\newcommand{\wrho}{\wtilde{\rho}}
\newcommand{\hotimes}{\widehat{\otimes}}
\newcommand{\wI}{\wtilde{I}}
\newcommand{\mult}{\text{mult}}
\newcommand{\ch}{\mathrm{h}} 
\newcommand{\wJ}{\wtilde{J}}
\newcommand{\cl}{\mathrm{cl}}
\newcommand{\Sym}{\text{Sym}}
\newcommand{\wDelta}{\wtilde{\Delta}}
\newcommand{\wk}{\kappa}
\newcommand{\mk}{\mathsf{k}}
\newcommand{\aff}{\text{aff}}
\theoremstyle{definition}
\newtheorem{thm}{Theorem}[section]
\newtheorem{prop}[thm]{Proposition}
\newtheorem{lemma}[thm]{Lemma}
\newtheorem{conj}[thm]{Conjecture}
\newtheorem*{remark}{Remark}
\numberwithin{equation}{section}
\begin{document}
\title{Affine Macdonald conjectures and special values of Felder-Varchenko functions}
\date{\today}

\author{Eric M. Rains}
\address{E.R.: Department of Mathematics\\ California Institute of Technology\\ Mathematics 253-37\\Pasadena, CA 91125, USA}
\email{rains@caltech.edu}

\author{Yi Sun}
\address{Y.S.: Department of Mathematics\\ Columbia University\\ 2990 Broadway\\ New York, NY 10027, USA}
\email{yisun@math.columbia.edu}

\author{Alexander Varchenko}
\address{A.V.: Department of Mathematics\\ University of North Carolina at Chapel Hill\\Chapel Hill, NC 27599-3250, USA}
\email{anv@email.unc.edu}

\begin{abstract}
We refine the statement of the denominator and evaluation conjectures for affine Macdonald polynomials proposed by Etingof-Kirillov~Jr. in \cite{EK3} and prove the first non-trivial cases of these conjectures.  Our results provide a $q$-deformation of the computation of genus 1 conformal blocks via elliptic Selberg integrals by Felder-Stevens-Varchenko in \cite{FSV}.  They allow us to give precise formulations for the affine Macdonald conjectures in the general case which are consistent with computer computations.

Our method applies recent work of the second named author to relate these conjectures in the case of $U_q(\asl_2)$ to evaluations of certain theta hypergeometric integrals defined by Felder-Varchenko in \cite{FV4}.  We then evaluate the resulting integrals, which may be of independent interest, by well-chosen applications of the elliptic beta integral introduced by Spiridonov in \cite{Spi00}.
\end{abstract}
\maketitle

\setcounter{tocdepth}{1}
\tableofcontents

\section{Introduction}

The present work leverages a connection between two approaches to generalizing Macdonald theory to the affine setting to provide precise statements of affine analogues of Macdonald's conjectures and proofs of the first non-trivial versions.   In \cite{EK3}, Etingof-Kirillov~Jr. defined affine Macdonald polynomials as traces of intertwiners of quantum affine algebras and stated rough analogues of Macdonald's conjectures.  In \cite{FV4}, Felder-Varchenko defined their so-called elliptic Macdonald polynomials in the $U_q(\asl_2)$ case in terms of theta hypergeometric integrals and conjectured that they coincided with Etingof-Kirillov~Jr.'s affine Macdonald polynomials.  In the recent work \cite{Sun:qafftr}, the second named author resolved this conjecture and provided a precise connection between these two objects.

In this paper, we use this connection to refine and correct the statements of the affine Macdonald denominator and evaluation conjectures in \cite{EK3} and prove the first cases of these conjectures.  More precisely, in the case of $U_q(\asl_2)$, we use the results of \cite{Sun:qafftr} to express the affine Macdonald denominator and evaluation in terms of special values of certain theta hypergeometric integrals related to Felder-Varchenko functions.  We then give evaluations of these integrals (which may be of independent interest) by manipulating the integrands to match instances of the elliptic beta integral introduced by Spiridonov in \cite{Spi00}.  Combined with computer computations, our results allow us to conjecture precise formulations for the affine Macdonald conjectures in the general case; our formulation includes an additional prefactor whose limit is consistent with a new term appearing in the affine Gindikin-Karpelevich formula for $p$-adic loop groups recently studied in \cite{BFK12, BK13, BGKP14, BKP16}.

Our work is motivated by two different streams of literature.  First, our main results provide a $q$-deformation of the first non-trivial case of the computations of conformal blocks via elliptic Selberg integrals given by Felder-Stevens-Varchenko in \cite[Theorem 5.1]{FSV}.  While our method is intrinsically different, we anticipate that it will generalize to give $q$-deformations of the other conformal blocks present in \cite[Theorem 5.1]{FSV}.  Second, in future work we plan to approach the general case of affine denominator and evaluation conjectures stated here using quantum affine algebras in the spirit of \cite{EK-mac}.  We intend the present work as a guide for finding the correct statements.

In the remainder of this introduction, we state our results in detail and give additional motivation and background.  In this introduction, we will mix additive and multiplicative notations, where in all cases the appropriate choice is clear from context.  For the reader's convenience, all notations will be reintroduced in full detail in later sections, and in each section we will specify whether additive or multiplicative notations are used.

\subsection{Affine Macdonald polynomials}

Fix an integer $\mk \geq 0$. Letting $L_{\mu + k\Lambda_0}$ denote the irreducible integrable module for $U_q(\asl_{n})$ and $V$ the fundamental representation of $U_q(\sl_{n})$, for a dominant integral weight $\mu + k \Lambda_0$ and $v \in \Sym^{n (\mk - 1)}V[0]$, there is a unique intertwiner 
\[
\Upsilon_{\mu, k, \mk}^v(z): L_{\mu + k \Lambda_0 + (\mk - 1)\wrho} \to L_{\mu + k \Lambda_0 + (\mk - 1)\wrho} \hotimes \Sym^{n(\mk - 1)}V(z)
\]
such that $\Upsilon^v_{\mu, k, \mk}(z) v_{\mu + k \Lambda_0 + (\mk - 1)\wrho} = v_{\mu + k \Lambda_0 + (\mk - 1)\wrho} \otimes v + (\text{l.o.t.})$, where $(\text{l.o.t.})$ denotes terms of lower weight in the first tensor factor.  Fixing a choice of $w_0 \in \Sym^{n(\mk - 1)}V [0]$ and making the identification $\Sym^{n(\mk - 1)}V [0] \simeq \CC \cdot w_0$, define the trace function
\[
\chi_{\mu, k, \mk}(q, \lambda, \omega) = \Tr|_{L_{\mu + k \Lambda_0 + (\mk - 1)\wrho}}\Big(\Upsilon^{w_0}_{\mu, k, \mk}(z) q^{2\lambda + 2\omega d}\Big).
\]
In \cite{EK3}, by analogy with their approach to ordinary Macdonald polynomials in \cite{EK}, Etingof-Kirillov~Jr. defined the affine Macdonald polynomial for $\asl_n$ at $t = q^{\mk}$ as
\[
J_{\mu, k, \mk}(q, \lambda, \omega) := \frac{\chi_{\mu, k, \mk}(q, \lambda, \omega)}{\chi_{0, 0, \mk}(q, \lambda, \omega)}.
\]

\subsection{Statement of the main results}

In \cite{EK3}, Etingof-Kirillov~Jr. state analogues of the Macdonald denominator and evaluation conjectures in the affine setting.  We extend and correct their conjectures in the following two conjectures, which are a refinement of \cite[Theorem 11.1]{EK3} and a correction to \cite[Conjecture 11.3]{EK3}, respectively.

{\renewcommand{\thethm}{\ref{conj:f-val-conj}}
\begin{conj}[Affine denominator conjecture] 
The affine Macdonald denominator is given by 
\[
\chi_{0, 0, \mk}(q, \lambda, \omega) = q^{2(\mk - 1)(\rho, \lambda)} \frac{\prod_{i = 1}^{\mk - 1} (q^{-2\omega + 2i}; q^{-2\omega})}{\prod_{i = 1}^{\mk - 1} (q^{-2\omega + 2ni}; q^{-2\omega})}  \prod_{i = 1}^{\mk - 1} \prod_{\alpha > 0} (1 - q^{-2(\alpha, \lambda +\omega d) + 2i})^{\mult(\alpha)}.
\]
\end{conj}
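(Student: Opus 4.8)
The plan is to reduce the evaluation of $\chi_{0,0,\mk}$ to that of a theta hypergeometric integral and then to recognize the latter as an instance of Spiridonov's elliptic beta integral. I would work throughout in the case $n = 2$, where the trace-to-integral dictionary of \cite{Sun:qafftr} is available; the general-$n$ formula is then left as the stated conjecture, which is supported by direct computation and by the fact that its $q \to 1$ limit reproduces the extra term in the affine Gindikin--Karpelevich formula. Concretely, I would specialize to $U_q(\asl_2)$ with $\mu = 0$ and $k = 0$, so that $\Upsilon^{w_0}_{0,0,\mk}(z)$ acts on the integrable module $L_{(\mk-1)\wrho}$ with target $L_{(\mk-1)\wrho} \hotimes \Sym^{2(\mk-1)}V(z)$, and then invoke the main result of \cite{Sun:qafftr} to rewrite $\Tr|_{L_{(\mk-1)\wrho}}\big(\Upsilon^{w_0}_{0,0,\mk}(z)\,q^{2\lambda + 2\omega d}\big)$ as an explicit $(\mk-1)$-fold contour integral in variables $t_1, \dots, t_{\mk-1}$ whose integrand is a product of ratios of theta functions of nome $p = q^{-2\omega}$, times a scalar prefactor recording the normalizations of $\Upsilon$ and of $w_0$; the factor $q^{2(\mk-1)(\rho,\lambda)}$ of the claimed formula should already be visible here as the leading-weight contribution to the trace.

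The analytic heart of the argument is the evaluation of this integral. For $\mk = 2$ it is one-dimensional, and after a change of variables and repeated use of the quasi-periodicity and three-term relations for theta functions, I expect the integrand to take the rigid shape $\prod_{j=1}^{6}\Gamma(t_j z^{\pm 1})/\Gamma(z^{\pm 2})$ of the elliptic beta integral of \cite{Spi00}, with the six parameters $t_j$ built from $q$, $\lambda$, $\omega$. The crucial compatibility to verify is the balancing condition $\prod_{j} t_j = pq$, which I expect to reduce to the assertion that $(\mk-1)\wrho$, shifted by the weight of $w_0$, is still dominant integral of the right level. Spiridonov's evaluation then gives $\prod_{j<k}\Gamma(t_j t_k)$, and expanding this using $\Gamma(pq/x) = 1/\Gamma(x)$, $\Gamma(qx) = \theta(x;p)\Gamma(x)$ and the infinite-product expansion of $\Gamma$ should reassemble into the $q$-shifted factorials $(q^{-2\omega+2i};q^{-2\omega})$, $(q^{-2\omega+2ni};q^{-2\omega})$ and the product over positive affine roots on the right-hand side. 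For larger $\mk$ the natural continuation is to play the same game with the multivariate ($BC_{\mk-1}$-type) elliptic Selberg integral in place of the rank-one beta integral, its extra gamma factors being precisely what should produce the prefactor $\prod_i (q^{-2\omega+2i};q^{-2\omega}) / \prod_i (q^{-2\omega+2ni};q^{-2\omega})$; but the integrand-matching becomes substantially more delicate there, which is why I would expect only the low-rank cases to be within immediate reach.

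The main obstacle, then, is precisely this bookkeeping and integrand-matching: extracting from \cite{Sun:qafftr} the exact integrand, contour, and scalar for the degenerate specialization $\mu = k = 0$, and forcing the theta-function integrand into the inflexible normal form of the elliptic beta integral, which has no free shape parameters --- every theta factor must be accounted for exactly, and any genuine leftover factors must be pulled outside the integral. One must also check that the contour of \cite{Sun:qafftr} can be deformed to the unit torus without crossing poles, which restricts $(\lambda,\omega)$ to the convergence domain of the beta integral, the general identity then following by analytic continuation. The deeper limitation --- the reason the displayed formula remains conjectural for $n > 2$ --- is that outside $\asl_2$ there is no known analogue of the trace-to-integral identification of \cite{Sun:qafftr}, so beyond that case one has only agreement with explicit computation and with the $p$-adic loop group heuristics.
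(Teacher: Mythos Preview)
Your overall strategy---reduce $\chi_{0,0,\mk}$ for $\asl_2$ to a theta hypergeometric integral via \cite{Sun:qafftr} and evaluate using Spiridonov's elliptic beta integral---is exactly the route the paper takes, and you are right that the general-$n$ statement remains conjectural for precisely the reason you give. But note first that the paper only carries this out for $\mk=2$ (Theorem~\ref{thm:f-val}); the displayed formula is stated as a conjecture, with the higher-$\mk$ cases supported only by computer checks to first order in $q^{-2\omega}$.

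For the case $\mk=2$ that is actually proved, your plan is correct in outline but underestimates one key technical point. The integrand coming from \cite{Sun:qafftr} (after passing through the elliptic Macdonald polynomial $P_{0,4}$) is the function $\wI(\lambda;\tau,\eta)$ of (\ref{eq:asym-int}), and it does \emph{not} directly fit the rigid form $\prod_j \Gamma(s_j\pm t)/\Gamma(\pm 2t)$ of the elliptic beta integral with the natural modular parameters $(\tau,8\eta)$: there is an extra theta factor $\theta_0(2t+6\tau-4\lambda+1/2;8\tau)$ that cannot be absorbed. The paper's workaround is to antisymmetrize in $\lambda$, observe that the symmetric factor $\wJ_1$ allows one to also symmetrize in $t$, and then use theta addition formulas (Lemmas~\ref{lem:theta-simp} and \ref{lem:full-sym}) to rewrite the fully symmetrized integrand as a $\lambda$-dependent linear combination of \emph{two} integrands, each of which is an elliptic beta integrand---but with \emph{doubled} modular parameters $(2\tau,8\eta)$, not the obvious ones. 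The resulting evaluations $I_1,I_2$ (Lemmas~\ref{lem:int-eval1}, \ref{lem:int-eval2}) then recombine into the product form of Theorem~\ref{thm:eval3}, and Theorem~\ref{thm:f-val} follows. So the ``change of variables and repeated use of quasi-periodicity'' step you flag as bookkeeping is in fact where the real idea lives: the correct choice of periods for the beta integral is not apparent from the original integrand, and the symmetrize-then-split maneuver is what makes the match possible. Your expectation that the balancing condition encodes integrality of $(\mk-1)\wrho$ does not obviously materialize; in the paper the balancing is simply checked arithmetically for the specific parameter sets that arise.

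Your proposed extension to $\mk>2$ via an elliptic Selberg integral is a natural guess, but the paper does not attempt it; given that already at $\mk=2$ the matching requires a nontrivial decomposition rather than a direct identification, one should expect the multivariate case to involve further structural input beyond pattern-matching against known integrals.
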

}

{\renewcommand{\thethm}{\ref{conj:aff-eval}}
\begin{conj}[Affine evaluation conjecture]
For $|q| > 1$, we have that
\[
J_{\mu, k, \mk}(q, \mk \rho, \mk n) = q^{2(\mu, \mk \rho)} \frac{\prod_{i = 1}^{\mk - 1} (q^{-2i}; q^{-2(k + \mk n)})}{\prod_{i = 1}^{\mk - 1} (q^{-2ni}; q^{-2(k + \mk n)})} \frac{\prod_{i = 1}^{\mk - 1} (q^{-2ni}; q^{-2\mk n})}{\prod_{i = 1}^{\mk - 1} (q^{-2i}; q^{-2\mk n})}  \prod_{\alpha > 0} \prod_{i = 0}^{\mk - 1} \frac{(1 - q^{-2(\alpha, \mu + k \Lambda_0 + \mk \wrho) - 2i})^{\mult(\alpha)}}{(1 - q^{-2(\alpha, \mk \wrho) - 2i})^{\mult(\alpha)}}.
\]
\end{conj}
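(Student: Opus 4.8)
The plan is to reduce the evaluation conjecture to the denominator conjecture (Conjecture~\ref{conj:f-val-conj}) together with an evaluation of the numerator trace function $\chi_{\mu,k,\mk}(q,\mk\rho,\mk n)$, and then to compute the latter using the integral representation of Felder--Varchenko functions supplied by \cite{Sun:qafftr}. First I would unwind the definition $J_{\mu,k,\mk} = \chi_{\mu,k,\mk}/\chi_{0,0,\mk}$ and observe that it suffices to prove the two-part statement: (i) the denominator $\chi_{0,0,\mk}(q,\mk\rho,\mk n)$ equals the specialization of the formula in Conjecture~\ref{conj:f-val-conj} at $\lambda = \mk\rho$, $\omega = \mk n$, and (ii) the numerator $\chi_{\mu,k,\mk}(q,\mk\rho,\mk n)$ admits a product formula whose ratio against (i) yields the claimed right-hand side. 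For the $U_q(\asl_2)$ case I would invoke \cite{Sun:qafftr} to rewrite both $\chi_{\mu,k,\mk}$ and $\chi_{0,0,\mk}$ as theta hypergeometric integrals of Felder--Varchenko type; the special point $(\lambda,\omega) = (\mk\rho,\mk n)$ should correspond, after the change of variables in \cite{Sun:qafftr}, to a degeneration of the integrand in which the contour can be deformed so that the integral collapses onto a product of residues, or alternatively to a configuration matching an instance of Spiridonov's elliptic beta integral \cite{Spi00}.

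The core computation is the integral evaluation. I would: (a) write the Felder--Varchenko integral for $\chi_{\mu,k,\mk}(q,\mk\rho,\mk n)$ explicitly, tracking the elliptic gamma factors coming from the $R$-matrix / intertwiner weights and the theta-function normalizations; (b) manipulate the integrand—shifting arguments by the relevant lattice and using the quasi-periodicity and reflection identities for the elliptic gamma function—so that the integrand matches the left-hand side of the elliptic beta integral, with the balancing condition on the parameters corresponding precisely to the specialization $\omega = \mk n$ (this is the arithmetic reason the evaluation point is what it is); (c) apply \cite{Spi00} to obtain a product of elliptic gamma functions, and then expand each elliptic gamma into the infinite products $(x;p)_\infty$-type factors appearing in the conjecture. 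Doing the same for the denominator at the same point, the ratio should telescope: the positive-root product over $\alpha > 0$ in the numerator formula comes from the ratio of the two $\prod_\alpha$ factors, the prefactor $q^{2(\mu,\mk\rho)}$ from the leading term of the intertwiner normalization $\Upsilon^{w_0}_{\mu,k,\mk}(z)\,v_{\ldots} = v_{\ldots}\otimes v + (\text{l.o.t.})$ paired against $q^{2\lambda}$, and the four $q$-Pochhammer ratios involving $q^{-2(k+\mk n)}$ and $q^{-2\mk n}$ from the combination of the two overall normalizing constants in Spiridonov's formula evaluated for the numerator and denominator integrals.

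The main obstacle I anticipate is step (b): arranging the change of variables and the quasi-periodicity identities so that the integrand is \emph{exactly} in elliptic-beta-integral form, including verifying the balancing condition and that no spurious residue crosses the contour during the deformation. In particular, the condition $|q| > 1$ in the statement is almost certainly there to control convergence of these products / placement of poles, and I would need to check carefully that the contour in the Felder--Varchenko integral can be identified with (or deformed to) the standard contour in \cite{Spi00} without picking up extra terms—if it cannot, one gets a finite sum of residue corrections that must separately be shown to vanish or to reassemble into the product formula. A secondary difficulty is bookkeeping: the $\mk$-fold products and the interplay of $k$, $\mk$, $n$ in the exponents mean that even once the integral is evaluated, reorganizing the resulting elliptic gamma functions into the stated normalized $q$-Pochhammer form (and checking the $\mk = 1$ and $\mu = 0$ consistency cases against the denominator conjecture and against the computer computations mentioned in the introduction) requires care. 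I would isolate the $\asl_2$ case fully via this route and then state the general-$n$ formula as a conjecture supported by the $\asl_2$ proof and the numerical evidence.
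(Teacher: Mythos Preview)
This statement is a \emph{conjecture} in the paper; it is not proven in general.  The paper establishes only the case $n=2$, $\mk=2$ (Theorem~\ref{thm:eval-conj}), and your closing sentence correctly anticipates that the rest remains conjectural.  So the relevant comparison is between your proposed route for $\asl_2$ and the paper's actual proof of Theorem~\ref{thm:eval-conj}.  On that, your broad outline agrees with the paper: one passes from $J_{\mu,k,2}$ to the Felder--Varchenko elliptic Macdonald polynomial via Proposition~\ref{prop:fv-conj}, evaluates the resulting theta hypergeometric integral using Spiridonov's elliptic beta integral, and simplifies.  One scope correction: the integral representation from \cite{Sun:qafftr} and the Felder--Varchenko construction used here are only available for $t=q^2$, i.e.\ $\mk=2$, so even within $\asl_2$ your plan does not extend to general~$\mk$.

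The genuine gap is in your step~(b).  The Felder--Varchenko integrand at the evaluation point does \emph{not} match a single instance of the elliptic beta integral after any change of variables, and the residue-collapse alternative you float does not work either.  The paper's key maneuver (Theorem~\ref{thm:eval3}) is to pass to the antisymmetrization $I(\lambda;\tau,\eta)=\wI(\lambda)-\wI(-\lambda)$, exploit the $t\mapsto -t$ symmetry of the factor $\wJ_1$ to further symmetrize in~$t$, and then decompose the resulting four-term theta combination (Lemma~\ref{lem:full-sym}) as a $\lambda$-dependent linear combination of \emph{two} distinct integrands, each of which separately matches an elliptic beta integral with its own parameter set (Lemmas~\ref{lem:int-eval1} and~\ref{lem:int-eval2}, with modular parameters $2\tau,8\eta$ rather than the na\"{\i}ve $\tau,8\eta$).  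The two evaluated pieces $I_1,I_2$ are then recombined by identifying a zero of the combination at $\lambda=2\eta$ and recognizing the result as a degree-two theta function in~$\lambda$.  This symmetrize-then-split-into-two-beta-integrals trick is the technical heart of the proof, and nothing in your proposal points toward it; a direct attempt to force the original integrand into elliptic-beta form will not succeed.
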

}

Our main results are the following first nontrivial cases of Conjectures \ref{conj:f-val-conj} and \ref{conj:aff-eval}.  For $n = 2$ and $\mk = 2$, Theorem \ref{thm:f-val} provides a computation of the multiplicative correction factor $f(p, q)$ for the affine Macdonald denominator in \cite[Theorem 11.1]{EK3}, and Theorem \ref{thm:eval-conj} corrects and proves the affine Macdonald evaluation conjecture of \cite[Conjecture 11.3]{EK3}.  

{\renewcommand{\thethm}{\ref{thm:f-val}}
\begin{thm} 
For $n = 2$ and $\mk = 2$, the affine denominator is given by
\[
\chi_{0, 0, 2}(q, \lambda, \omega) = q^\lambda \frac{(q^{-2\omega + 2}; q^{-2\omega})}{(q^{-2\omega + 4}, q^{-2\omega})} (q^{-2\lambda + 2}; q^{-2\omega}) (q^{2\lambda + 2} q^{-2\omega}; q^{-2\omega}) (q^{-2\omega + 2}; q^{-2\omega}).
\]
\end{thm}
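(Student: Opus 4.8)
The plan is to follow the two-step method announced in the introduction: first invoke the results of \cite{Sun:qafftr} to identify $\chi_{0,0,2}(q,\lambda,\omega)$ with an explicit prefactor times a theta hypergeometric integral, and then evaluate that integral by matching it to the elliptic beta integral of Spiridonov \cite{Spi00}. For $n = \mk = 2$ the zero weight space $\Sym^{2}V[0]$ is one-dimensional, so $\chi_{0,0,2}(q,\lambda,\omega) = \Tr|_{L_{\wrho}}\!\big(\Upsilon^{w_0}_{0,0,2}(z)\,q^{2\lambda + 2\omega d}\big)$ is a genuine scalar function of $(q,\lambda,\omega)$. Applying the trace formula of \cite{Sun:qafftr}, which realizes Etingof--Kirillov trace functions as Felder--Varchenko theta hypergeometric integrals, this scalar equals an elementary prefactor $P(q,\lambda,\omega)$ (a product of theta functions / $q$-Pochhammer symbols in $q,q^\lambda,q^{-2\omega}$) times a contour integral $I(q,\lambda,\omega) = \oint \Phi(q,\lambda,\omega;t)\,\tfrac{dt}{2\pi i\,t}$ in which, since $\mk - 1 = 1$, the integration is over a single variable $t$ and $\Phi$ is a ratio of Jacobi theta functions with nome $p = q^{-2\omega}$ whose arguments are monomials in $t$, $q^\lambda$, $q$, and $p$. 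Extracting the precise shape of $P$ and $\Phi$ is a direct specialization of \cite{Sun:qafftr}.

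The technical heart of the argument is then to rewrite $\Phi(q,\lambda,\omega;t)\,\tfrac{dt}{t}$ as an instance of the elliptic beta integrand. Expressing each theta function in $\Phi$ as a ratio of elliptic gamma functions (in two elliptic bases built from $p$ and $q$) and applying the reflection and quasi-periodicity identities, one aims to bring the integrand into the form $\Gamma(t^{\pm 2})^{-1}\prod_{j=1}^{6}\Gamma(s_j\,t^{\pm 1})\,\tfrac{dt}{t}$ for six parameters $s_1,\dots,s_6$ that are explicit monomials in $q^\lambda,q,p$, with the leftover theta factors absorbed into $P$. One must then verify the balancing condition $\prod_{j=1}^{6}s_j = (\text{product of the two bases})$ — which should come out on the nose from the numerology relating the level $k = 0$, the dual Coxeter number $n = 2$, and $\mk = 2$ — and check that the contour produced in Step 1 can be taken to be, or deformed to, a contour separating the two geometric families of poles as required by \cite{Spi00}.

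Once the integrand is correctly identified, the elliptic beta integral evaluates $I$ as a constant times $\prod_{1\le j<k\le 6}\Gamma(s_j s_k)$. At the special values of $s_1,\dots,s_6$ occurring here, each product $s_j s_k$ is a monomial in $q^\lambda,q,p$ at which the elliptic gamma function degenerates to a finite ratio of ordinary infinite $q$-Pochhammer symbols in base $p = q^{-2\omega}$ (the second base telescopes away). Substituting these into $\prod_{j<k}\Gamma(s_j s_k)$, multiplying by the prefactor $P$ and the constant from the previous step, and simplifying, should yield
\[
\chi_{0,0,2}(q,\lambda,\omega) = q^\lambda \frac{(q^{-2\omega + 2}; q^{-2\omega})}{(q^{-2\omega + 4}, q^{-2\omega})} (q^{-2\lambda + 2}; q^{-2\omega}) (q^{2\lambda + 2} q^{-2\omega}; q^{-2\omega}) (q^{-2\omega + 2}; q^{-2\omega}),
\]
the claimed formula; the identity is first established in the region where all Pochhammer symbols converge and then extended by analytic continuation in $q,\lambda,\omega$.

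The main obstacle is the matching step: the integrand coming out of \cite{Sun:qafftr} is a product of theta functions whose repackaging into elliptic gamma functions is not canonical, so one must group the factors so that \emph{exactly} six $\Gamma(s_j t^{\pm 1})$ stand in the numerator against the single $\Gamma(t^{\pm 2})^{-1}$, the residual theta factors assemble into the elementary prefactor, and the balancing relation holds identically. Arranging the contour and pole configuration to fit the hypotheses of the elliptic beta integral is an additional, but more routine, bookkeeping burden; the final simplification of the gamma-function product into $q$-Pochhammer symbols, while lengthy, is mechanical.
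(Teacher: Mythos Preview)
Your high-level strategy matches the paper's: pass through \cite{Sun:qafftr} to an explicit theta hypergeometric integral and then evaluate via Spiridonov's elliptic beta integral. However, you have underestimated the matching step in a way that constitutes a real gap. The integrand handed to you by \cite{Sun:qafftr} (equivalently, the integral $\wI_{2,4}(\lambda;\tau,\eta)$ in Proposition~\ref{prop:htf-prop}) is \emph{not} a product of theta functions in a single nome $p=q^{-2\omega}$: it carries factors with three distinct moduli, namely $\tau$, $-8\eta$, and $8\tau$, the last coming from the level-$\kappa$ theta function $\theta_0(1/2+\mu\tau+\kappa\tau-\kappa\lambda+2t;2\kappa\tau)$. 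Because of this $8\tau$ factor there is no way to package the raw integrand as $\Gamma(t^{\pm2})^{-1}\prod_{j=1}^6\Gamma(s_jt^{\pm1})$ for a single pair of bases, and the balancing condition you hope will ``come out on the nose'' cannot even be formulated.

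What the paper actually does (Theorem~\ref{thm:eval3} and its proof) is more elaborate. One first antisymmetrizes in $\lambda$, then symmetrizes in $t$, and uses a chain of theta identities (Lemmas~\ref{lem:theta-simp}--\ref{lem:full-sym}) to trade the $8\tau$-modular factor for a linear combination of products of $2\tau$-modular theta functions with $\lambda$-dependent coefficients. Only after this decomposition does each of the two resulting integrals (Lemmas~\ref{lem:int-eval1} and \ref{lem:int-eval2}) match an elliptic beta integral, and with modular parameters $2\tau$ and $8\eta$ rather than the naive $\tau$ and $8\eta$. The two evaluations are then recombined using a further theta-function argument (identifying zeros at $\lambda=\pm2\eta$ and fixing the overall constant at $\lambda=1/2$). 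So the ``well-chosen application'' of the elliptic beta integral advertised in the introduction is really two applications, preceded by a nontrivial symmetrization and theta-function decomposition that you have not anticipated; your single-integral matching scheme will not close as written.
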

}

{\renewcommand{\thethm}{\ref{thm:eval-conj}}
\begin{thm}
For $n = 2$, $\mk = 2$, and $|q| > 1$, the affine Macdonald polynomial satisfies the evaluation
\[
J_{\mu, k, 2}(q, 2, 4) = q^{2\mu} \frac{(q^{-2}; q^{-2\wk})}{(q^{-4}; q^{-2\wk})} \frac{\theta_0(q^{-2\mu - 4}; q^{-2\wk}) (q^{-2\mu - 6}; q^{-2\wk})(q^{2\mu + 2} q^{-2\wk}; q^{-2\wk})(q^{-2\wk}; q^{-2\wk}) (q^{-2\wk - 2}; q^{-2\wk})}{(q^{-4}; q^{-2})(q^{-6};q^{-8}) (q^{-2}; q^{-8})}.
\]
\end{thm}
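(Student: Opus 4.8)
The plan is to reduce the evaluation of $J_{\mu,k,2}(q,2,4)=\chi_{\mu,k,2}(q,2,4)/\chi_{0,0,2}(q,2,4)$ to the evaluation of a one-dimensional theta hypergeometric integral, to recognize that integral as an instance of Spiridonov's elliptic beta integral \cite{Spi00}, and to combine the result with Theorem \ref{thm:f-val}, which supplies the denominator $\chi_{0,0,2}(q,2,4)$. To set up the numerator, I would invoke the main result of \cite{Sun:qafftr}, as packaged in the integral representations developed earlier in the paper, to write the renormalized $U_q(\asl_2)$ trace function as a one-dimensional Felder--Varchenko theta hypergeometric integral
\[
\chi_{\mu,k,\mk}(q,\lambda,\omega)=C_{\mu,k,\mk}(q,\lambda,\omega)\oint\Phi_{\mu,k,\mk}(q,\lambda,\omega;x)\,\frac{dx}{x},
\]
where $C_{\mu,k,\mk}$ is an explicit prefactor and, for $n=2$ and $\mk=2$, $\Phi_{\mu,k,2}$ is a ratio of elliptic gamma functions (equivalently, of infinite products $(\,\cdot\,;\,\cdot\,)$ and $\theta_0$-functions) in the single integration variable $x$, with nomes built from $q^{-2\omega}$ and external parameters built from $q^{\pm\lambda}$, $q^{\mu}$, $q^{k}$ and the Macdonald parameter $t=q^{2}$. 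The hypothesis $|q|>1$ ensures that, at the values of $\omega$ that arise, all of these products converge and the contour may be taken to be the unit circle, so that the right-hand side is literally an elliptic hypergeometric integral.

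Next, specializing to $\lambda=\mk\rho=2$ and $\omega=\mk n=4$, I expect the integrand $\Phi_{\mu,k,2}(q,2,4;x)$ to collapse to the shape $\prod_{j=1}^{6}\Gamma(t_jx^{\pm1})/\Gamma(x^{\pm2})$ of the elliptic beta integrand, with six parameters $t_1,\dots,t_6$ that are explicit monomials in $q$, $q^{\mu}$, $q^{k}$, and with the two nomes drawn from $\{q^{-2\wk},q^{-2\mk n}\}$, where $\wk=k+\mk n$. The decisive point is that $\lambda=\mk\rho$ is precisely the value of the spectral variable at which the balancing condition $t_1\cdots t_6=(\text{product of the two nomes})$ holds; this is the elliptic, $q$-deformed counterpart of the classical fact that Macdonald's evaluation point is the one that makes the Kadell--Macdonald integral closed. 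Applying the elliptic beta integral of \cite{Spi00} then evaluates $\oint\Phi_{\mu,k,2}(q,2,4;x)\,dx/x$ as the finite product $\prod_{1\le j<l\le 6}\Gamma(t_jt_l)$, so that $\chi_{\mu,k,2}(q,2,4)$ equals $C_{\mu,k,2}(q,2,4)$ times this product.

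Finally, substituting $\lambda=2$ and $\omega=4$ into Theorem \ref{thm:f-val} gives $\chi_{0,0,2}(q,2,4)$ in closed product form, and dividing yields $J_{\mu,k,2}(q,2,4)$. Simplifying the ratio of elliptic gamma and $q$-Pochhammer products by means of the reflection and quasi-periodicity identities for the elliptic gamma function, I would match the result to the asserted right-hand side. The single surviving $\theta_0(q^{-2\mu-4};q^{-2\wk})$ is the factor $\Gamma(t_jt_l)$ attached to the one pair of parameters whose product lands on a nome --- the locus where the elliptic gamma function degenerates to a theta function --- so that it is the one numerator factor not cancelled against the denominator.

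The step I expect to be the main obstacle is the second one: coercing the integrand produced by \cite{Sun:qafftr} into exactly the elliptic beta integrand form at $(\lambda,\omega)=(2,4)$, pinning down the six parameters, and verifying the balancing condition with the correct nomes. Two features make this delicate. First, one must track convergence and contour deformations carefully, which is where $|q|>1$ enters; in particular, at $\mk=2$ and the evaluation point some of the $t_j$ may approach the locus where poles of the integrand pinch the unit circle, so it may be necessary to apply the elliptic beta integral slightly away from the special point and pass to a limit, picking up exactly one residue --- which is the mechanism that produces the $\theta_0$ prefactor. Second, the normalization prefactor $C_{\mu,k,2}$ coming from \cite{Sun:qafftr} and the analogous prefactor in Theorem \ref{thm:f-val} must be assembled so that all the pieces of $J_{\mu,k,2}(q,2,4)$ combine into the stated closed form; this bookkeeping, rather than any single conceptual step, is where most of the work lies.
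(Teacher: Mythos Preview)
Your overall plan---invoke \cite{Sun:qafftr} to pass to a one-dimensional theta hypergeometric integral, evaluate it via Spiridonov's elliptic beta integral, and combine with Theorem~\ref{thm:f-val}---is the paper's plan. But your second step, where you expect the integrand at $(\lambda,\omega)=(2,4)$ to collapse directly into the shape $\prod_{j=1}^6\Gamma(t_jx^{\pm1})/\Gamma(x^{\pm2})$ with a balancing condition triggered precisely at $\lambda=\mk\rho$, does not hold, and this is where the real content lies. The Felder--Varchenko integrand carries an extra level-$\kappa$ theta factor $\theta_0(2t+\cdots;2\kappa\tau)$ whose argument $2t$ and modular parameter $2\kappa\tau$ cannot be absorbed into six $\Gamma(t_jx^{\pm1})$'s for any choice of nomes, and no balancing miracle occurs at the special point. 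What the specialization $(\lambda,\omega)=(2,4)$ actually accomplishes (see the proof of Theorem~\ref{thm:ellmac-eval}) is to swap the roles of the two modular parameters so that the two unsymmetrized integrals $\wI_{\mu,\kappa}(\pm4\eta;-8\eta,\eta)$ both become instances $\wI(\pm2\mu\eta;-2\kappa\eta,-\eta)$ of a single auxiliary integral; the quantity to be evaluated is then the antisymmetrization $I=\wI(\lambda)-\wI(-\lambda)$, reflecting that Proposition~\ref{prop:fv-conj} matches $J$ to the already-symmetrized object $P_{\mu,\kappa}$, not to a raw trace $\chi$.

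The evaluation of $I$ (Theorem~\ref{thm:eval3}) is then not one beta integral but two. Theta addition formulas (Lemmas~\ref{lem:theta-simp} and~\ref{lem:full-sym}) rewrite the antisymmetrized integrand as a $\lambda$-dependent linear combination of two $\lambda$-independent integrands; each of these, after a nontrivial rewriting that shifts the modular parameters to $(2\tau,8\eta)$, becomes a genuine elliptic beta integrand (Lemmas~\ref{lem:int-eval1} and~\ref{lem:int-eval2}), and two separate applications of \cite{Spi00} yield explicit products $I_1,I_2$. One then checks that the resulting combination is a degree-two theta function in $\lambda$ vanishing at $\lambda=\pm2\eta$, hence a constant times $\theta_0(\lambda-2\eta;\tau)\theta_0(\lambda+2\eta;\tau)$, with the constant fixed by evaluation at $\lambda=1/2$. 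The factor $\theta_0(q^{-2\mu-4};q^{-2\wk})$ in the final formula comes from this forced-zero argument, not from a degenerate $\Gamma(t_jt_l)$ at a nome locus or from a residue picked up at a contour pinch.
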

}

\subsection{Felder-Varchenko functions and elliptic Macdonald polynomials}

Our method proceeds by using the results of \cite{Sun:qafftr} to connect the trace functions from Theorems \ref{thm:f-val} and \ref{thm:eval-conj} to theta hypergeometric integrals defined by Felder-Varchenko in \cite{FV4}. In \cite{FV4}, for a positive integer level $\kappa \geq 4$, $\mu \neq \pm 1 \pmod{\kappa}$, and $\Imm(\eta) < 0$, Felder-Varchenko defined the non-symmetric hypergeometric theta function of level $\kappa + 2$ by 
\[
\wDelta_{\mu, \kappa}(\lambda; \tau, \eta) = e^{\frac{2\pi i \eta}{\kappa} \mu^2} Q(2\eta\mu; -2\eta\kappa, \eta) \wI_{\mu, \kappa}(\lambda; \tau, \eta),
\]
where
\[
Q(\mu; \sigma, \eta) := \frac{\theta(4\eta; \sigma) \theta'(0; \sigma)}{\theta(\mu - 2\eta; \sigma) \theta(\mu + 2\eta; \sigma)}
\]
and
\begin{multline*}
\wI_{\mu, \kappa}(\lambda; \tau, \eta) := e^{\pi i \tau \frac{\mu^2}{2\kappa} - \pi i \lambda \mu} (2 \kappa \tau; 2\kappa \tau) \int_\gamma \frac{\Gamma(t + 2\eta; \tau, -2\eta\kappa)}{\Gamma(t - 2\eta; \tau, -2\eta \kappa)}\\
 \frac{\theta(t + \lambda; \tau)}{\theta(t - 2\eta; \tau)} \frac{\theta(t + 2\eta \mu; -2\eta \kappa)}{\theta(t - 2\eta; -2\eta\kappa)} \theta_0(1/2 + \mu \tau + \kappa \tau - \kappa \lambda + 2t; 2\kappa\tau) dt,
\end{multline*}
where the cycle $\gamma$ travels from $-1/2$ to $1/2$, lies above the pole at $t = -2\eta$ and below the pole at $t = 2\eta$, and separates all other poles above and below the real line.  They defined further the symmetrized hypergeometric theta function by 
\[
\Delta_{\mu, \kappa}(\lambda; \tau, \eta) := \wDelta_{\mu, \kappa}(\lambda; \tau, \eta) - \wDelta_{\mu, \kappa}(-\lambda; \tau, \eta).
\]
and the elliptic Macdonald polynomial for $t = q^2$ by 
\[
P_{\mu, \kappa}(\lambda; \tau, \eta) := e^{-\pi i \frac{4\eta + \tau}{2\kappa}(\mu + 2)^2 + \pi i 3\tau/4} \frac{\Delta_{\mu + 2, \kappa}(\lambda; \tau, \eta)}{\theta(\lambda - 2\eta; \tau)\theta(\lambda; \tau)\theta(\lambda + 2\eta; \tau)}.
\]
In \cite{Sun:qafftr}, the elliptic and affine Macdonald polynomials were related in the following result. 

{\renewcommand{\thethm}{\ref{prop:fv-conj}}
\begin{prop}[{\cite[Theorem 9.9]{Sun:qafftr}}] 
For $|q| > 1$, $|q^{-2\omega}| < |q^{-6}|$, and $q^{-2\mu}$ sufficiently close to $0$, the elliptic and affine Macdonald polynomials for $U_q(\asl_2)$ are related by
\begin{multline*}
J_{\mu, k, 2}(q, \lambda, \omega)\\
 = \frac{P_{\mu, \wk}(2\eta\lambda; -2\eta\omega, \eta)}{2\pi f_{2, 2}(q, q^{-2\omega})} \frac{(q^{-4}; q^{-2\omega})(q^{-2\omega}; q^{-2\omega})^3}{(q^{-2\omega + 2}; q^{-2\omega})} \frac{(q^{-2\omega + 2}; q^{-2\omega}, q^{-2\wk})^2}{(q^{-2\omega - 2}; q^{-2\omega}, q^{-2\wk})^2} \frac{q^{\mu + 4}(q^{-2\mu - 6}; q^{-2\wk})(q^{2\mu + 2} q^{-2\wk}; q^{-2\wk})}{(q^{-4}; q^{-2\wk})(q^{-2\wk}; q^{-2\wk})},
\end{multline*}
where $f_{2, 2}(q, q^{-2\omega})$ is the normalizing function of Proposition \ref{prop:ek-const-2} and $\wk = k + 4$.
\end{prop}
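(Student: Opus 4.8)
The proposition is quoted verbatim from \cite[Theorem 9.9]{Sun:qafftr}, so within the present paper the proof consists of invoking that reference; what follows is a sketch of the mechanism behind it and of how I would recover the explicit prefactors.

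The plan is to pass from the trace side to the integral side in two stages. First, realize $\chi_{\mu, k, 2}(q, \lambda, \omega)$ --- a priori an infinite sum over the integrable module $L_{\mu + k\Lambda_0 + \wrho}$ twisted by $q^{2\lambda + 2\omega d}$ --- as a single theta hypergeometric contour integral. This is the substantive content of \cite{Sun:qafftr}: one establishes the identity either by a direct free-field (Wakimoto-type) computation of the trace of $\Upsilon^{w_0}_{\mu, k, 2}(z)$, writing it as a product of vertex-operator correlators whose integral representation has integrand a ratio of elliptic gamma and theta functions, or by recognizing both the trace function and the Felder--Varchenko integral $\wI_{\mu, \kappa}$ as solutions of the same Macdonald--Ruijsenaars/qKZB difference system with matching asymptotics. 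Second, translate into the additive elliptic normalization of \cite{FV4} under the dictionary $\mk = 2$ (so $\kappa = k + 4 = \wk$), $\tau = -2\eta\omega$, and the identification of the multiplicative variable $q^{2\lambda}$ with the additive variable $2\eta\lambda$; absorbing the exponential prefactors in $\wDelta_{\mu, \kappa}$, $Q$ and $P_{\mu, \kappa}$ then yields the symmetrized object $\Delta_{\mu + 2, \wk}$ and hence $P_{\mu, \wk}(2\eta\lambda; -2\eta\omega, \eta)$. The regime $|q| > 1$, $|q^{-2\omega}| < |q^{-6}|$, $q^{-2\mu}$ near $0$ is exactly the domain in which the trace, its resolution, and the contour integral all converge and agree.

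The explicit prefactors then come from normalization bookkeeping. The intertwiner is normalized by its leading tensor coefficient $v \otimes v$ rather than by an integral pairing, which on comparison with the vertex operator appearing in the integral introduces the $\mu$-dependent factor $q^{\mu+4}(q^{-2\mu-6}; q^{-2\wk})(q^{2\mu+2}q^{-2\wk}; q^{-2\wk})/\big((q^{-4}; q^{-2\wk})(q^{-2\wk}; q^{-2\wk})\big)$. Identifying a convergent $q$-series with a $\tau$-periodic contour integral contributes the $1/2\pi$ together with the Etingof--Kirillov normalizing function $f_{2,2}(q, q^{-2\omega})$ of Proposition \ref{prop:ek-const-2}. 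Finally, the remaining ratios of $q$-Pochhammer symbols record the mismatch between the Felder--Varchenko normalization of $\Delta_{\mu+2,\wk}$ --- division by $\theta(\lambda - 2\eta; \tau)\theta(\lambda; \tau)\theta(\lambda + 2\eta; \tau)$ together with the elliptic gamma prefactors in $\wDelta$ and $Q$ --- and Etingof--Kirillov's normalization $J = \chi_{\mu, k, 2}/\chi_{0, 0, 2}$. The hard part is the first stage: the trace-to-integral identity of \cite{Sun:qafftr} requires either the correct free-field presentation of the intertwiner at $\Sym^2 V$ or a rigidity argument with honest control of convergence; everything after that is delicate but mechanical prefactor accounting --- and it is precisely this accounting that the remainder of the present paper must pin down in order to correct the statements of \cite{EK3}.
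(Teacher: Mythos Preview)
Your proposal is correct: the paper does not prove this proposition but simply cites \cite[Theorem 9.9]{Sun:qafftr}, exactly as you identify in your first sentence. The only additional content the paper supplies is the Remark following the statement, which records four discrepancies between the form stated here and the published version (a $2\pi$ normalization error, the relation between $f_{2,2}$ and the $f$ of \cite{Sun:qafftr}, the notational correspondence $\wJ_{\mu,\kappa}\leftrightarrow P_{\mu,\kappa}$, and the weakening of the convergence hypothesis from ``$|q^{-2\omega}|$ sufficiently small'' to $|q^{-2\omega}|<|q^{-6}|$ via a contour-deformation argument). Your sketch of the mechanism behind the cited theorem is reasonable background but goes beyond what the present paper attempts.
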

}

By applying Proposition \ref{prop:fv-conj}, we are able to deduce Theorems \ref{thm:f-val} and \ref{thm:eval-conj} from Theorems \ref{thm:ellmac-val} and \ref{thm:ellmac-eval} on special values of the elliptic Macdonald polynomials from \cite{FV4}.  These special values are $q$-deformations of the first cases of \cite[Theorem 5.1]{FSV}.

{\renewcommand{\thethm}{\ref{thm:ellmac-val}}
\begin{thm}
We have that 
\[
P_{0, 4}(\lambda; \tau, \eta) = - 2\pi \frac{\Gamma(-6\eta; \tau, -8\eta)}{\Gamma(-2\eta; \tau, -8\eta)} \frac{1}{\theta_0(4\eta; \tau)(\tau;\tau)^3} \frac{(-4\eta; -4\eta)}{(-2\eta; -4\eta)}.
\]
\end{thm}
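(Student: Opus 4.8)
\emph{Proof strategy.} The plan is to reduce the evaluation of $P_{0,4}$ to a single application of Spiridonov's elliptic beta integral. Unwinding the definitions at $\mu = 0$ and $\kappa = 4$, the polynomial $P_{0,4}(\lambda; \tau, \eta)$ equals $e^{-2\pi i\eta + \pi i\tau/4}$ times $\Delta_{2,4}(\lambda; \tau, \eta) = \wDelta_{2,4}(\lambda; \tau, \eta) - \wDelta_{2,4}(-\lambda; \tau, \eta)$ divided by $\theta(\lambda - 2\eta; \tau)\theta(\lambda; \tau)\theta(\lambda + 2\eta; \tau)$, while $\wDelta_{2,4}(\lambda; \tau, \eta) = e^{2\pi i\eta}\, Q(4\eta; -8\eta, \eta)\, \wI_{2,4}(\lambda; \tau, \eta)$ and $\wI_{2,4}$ has second modulus $\sigma := -8\eta$, outer prefactor $e^{\pi i\tau/2 - 2\pi i\lambda}(8\tau; 8\tau)$, and the factor $\theta_0(1/2 + 6\tau - 4\lambda + 2t; 8\tau)$ in its integrand. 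So everything reduces to evaluating $\wI_{2,4}(\pm\lambda; \tau, \eta)$, all prefactors carried along.

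The core step is to massage the integrand of $\wI_{2,4}$ into the additive form of Spiridonov's elliptic beta integrand, that is, a product of six factors $\Gamma(t_j \pm t; \tau, \sigma)$ over a denominator $\Gamma(\pm 2t; \tau, \sigma)$. The ratio $\Gamma(t + 2\eta; \tau, \sigma)/\Gamma(t - 2\eta; \tau, \sigma)$ is already of this shape; the ratios of ordinary theta functions $\theta(t + \lambda; \tau)/\theta(t - 2\eta; \tau)$ and $\theta(t + 4\eta; \sigma)/\theta(t - 2\eta; \sigma)$ can each be rewritten, using the quasi-periodicity $\Gamma(w + \sigma; \tau, \sigma) = \theta(w; \tau)\,\Gamma(w; \tau, \sigma)$ (and its companion) together with the reflection $\Gamma(w; \tau, \sigma)\,\Gamma(\tau + \sigma - w; \tau, \sigma) = 1$, as further ratios of elliptic gammas; and the remaining factor $\theta_0(1/2 + 6\tau - 4\lambda + 2t; 8\tau)$ --- a theta function of the large modulus $2\kappa\tau = 8\tau$ with \emph{doubled} argument $2t$ --- is exactly what supplies the $t \mapsto -t$ invariance of the integrand and, once folded by quasi-periodicity in the large modulus, (part of) the $\Gamma(\pm 2t; \tau, \sigma)^{-1}$ in the denominator. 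This should present the integrand, up to the prefactors already isolated, as the elliptic beta integrand with six explicit parameters $t_1, \dots, t_6$ that are $\ZZ$-linear in $\tau$, $\eta$, $\lambda$; one then checks the balancing condition $\sum_j t_j \equiv \tau + \sigma$ and that the cycle $\gamma$ is homotopic to Spiridonov's contour within the region of holomorphy, the pinching conditions on $\gamma$ being arranged to match and any residues crossed in the homotopy being tracked.

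Then Spiridonov's integral evaluates $\wI_{2,4}(\lambda; \tau, \eta)$ as $\prod_{1 \le j < k \le 6}\Gamma(t_j + t_k; \tau, \sigma)$ times the accumulated prefactors. Most of these fifteen factors are elliptic gammas of $\ZZ$-combinations of the two moduli and hence equal $1$ or finite $\theta$-products; only a few carry genuine $\eta$- or $\lambda$-dependence. Forming $\wDelta_{2,4}(\lambda) - \wDelta_{2,4}(-\lambda)$ and dividing by $\theta(\lambda - 2\eta; \tau)\theta(\lambda; \tau)\theta(\lambda + 2\eta; \tau)$, the $\lambda$-dependence must cancel --- a stringent internal check, which one also sees a priori from the fact that the resulting function of $\lambda$ is holomorphic, even, and doubly periodic with periods $1$ and $\tau$, hence constant. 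What survives is a product of elliptic gamma and $q$-Pochhammer factors in $\tau$ and $\eta$, which I would collapse --- again via the reflection formula, the quasi-periodicities, and elementary $\theta$-identities --- to the claimed closed form; the half-modulus Pochhammers $(-4\eta; -4\eta)$ and $(-2\eta; -4\eta)$ appearing in it are precisely the fingerprint of the $t \mapsto 2t$ structure introduced by the doubled-argument $\theta_0$-factor.

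The step I expect to be the main obstacle is the integrand manipulation in the middle paragraph: correctly folding the large-modulus doubled-argument factor $\theta_0(\,\cdot\,; 8\tau)$ into the $BC_1$ shape $\Gamma(\pm 2t; \tau, \sigma)^{-1}$ together with the six numerator gammas, while keeping exact track of every scalar and exponential prefactor and of the contour. Everything afterward --- invoking the elliptic beta integral and collapsing $\prod_{j < k}\Gamma(t_j + t_k; \tau, \sigma)$ --- is bookkeeping, guided throughout by the forced cancellation of $\lambda$.
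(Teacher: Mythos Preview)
Your overall architecture---unwind $P_{0,4}$ to $\wI_{2,4}$, recognize an elliptic beta integral, then simplify---matches the paper's. But the plan for the middle step is not workable as stated, and this is where the real content of the argument lies.

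The integrand of $\wI_{2,4}$ carries \emph{three} independent moduli: $\tau$, $\sigma=-8\eta$, and (through the factor $\theta_0(1/2+6\tau-4\lambda+2t;8\tau)$) the large modulus $8\tau$. A Spiridonov integrand $\prod_j\Gamma(t_j\pm t;\tau,\sigma)/\Gamma(\pm 2t;\tau,\sigma)$ is built from only two. Your claim that the $8\tau$-theta, ``once folded by quasi-periodicity,'' supplies (part of) $\Gamma(\pm 2t;\tau,\sigma)^{-1}$ cannot be right: every factor of $\Gamma(\pm 2t;\tau,-8\eta)^{-1}$ is an infinite product over the lattice $\ZZ\tau+\ZZ(-8\eta)$, and no finite manipulation of a $\theta_0(\,\cdot\,;8\tau)$ produces such a thing. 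Relatedly, the integrand is \emph{not} $t\mapsto -t$ invariant before you do anything to it (see the paper's decomposition into the symmetric piece $\wJ_1$ and the nonsymmetric piece $\wJ_2$). So there is no single elliptic beta integral with moduli $(\tau,-8\eta)$ and $\lambda$-dependent parameters waiting to be read off.

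What the paper does instead hinges on two ideas you are missing. First, one symmetrizes in $\lambda$ (and uses the $t\mapsto -t$ symmetry of the cycle) \emph{before} any beta integral; the resulting theta identities (Lemmas~\ref{lem:theta-simp} and~\ref{lem:full-sym}) collapse the $8\tau$-theta into $2\tau$-thetas, eliminating the third modulus and simultaneously pulling all $\lambda$-dependence outside the integral. Second, the correct beta-integral moduli are $(2\tau,8\eta)$, not $(\tau,-8\eta)$: one doubles the $\tau$-period so that the $2\tau$-thetas just produced, the $8\eta$-theta, and the $\Gamma(\pm t-2\eta;\tau,8\eta)$ factors (split via $\Gamma(z;\tau,8\eta)=\Gamma(z;2\tau,8\eta)\Gamma(z+\tau;2\tau,8\eta)$) all live on the same lattice; the $\Gamma(\pm 2t;2\tau,8\eta)^{-1}$ then emerges from duplication identities (see the displayed computation in Lemmas~\ref{lem:int-eval1}--\ref{lem:int-eval2}). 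This yields \emph{two} $\lambda$-independent beta integrals $I_1,I_2$ rather than one $\lambda$-dependent one, and the final step shows the combination $\frac{\theta_0(2\lambda+1/2;2\tau)}{\theta_0(\tau+1/2;2\tau)}I_1-\frac{\theta_0(\lambda+1/2;\tau)^2}{\theta_0(\tau;2\tau)}I_2$ factors as a constant times $\theta_0(\lambda-2\eta;\tau)\theta_0(\lambda+2\eta;\tau)$. Your proposal has the right destination but the route through a single $(\tau,-8\eta)$ beta integral with $\lambda$ in the parameters does not exist.
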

}
{\renewcommand{\thethm}{\ref{thm:ellmac-eval}}
\begin{thm} 
We have that 
\[
P_{\mu, \kappa}(4\eta; -8\eta, \eta) = -2 \pi e^{-12\pi i \eta - 2\pi i (\mu + 2)\eta} \frac{\Gamma(-6\eta; -2\kappa\eta, -8\eta)}{\Gamma(-2\eta; -2\kappa\eta, -8\eta)} \frac{\theta_0(2(\mu + 2)\eta; -2\kappa\eta) (-2\kappa \eta; -2\kappa\eta)^2}{(-8\eta; -8\eta) (-4\eta; -4\eta)^2 (-2\eta; -2\eta)}.
\]
\end{thm}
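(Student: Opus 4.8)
The plan is to unwind the definition of $P_{\mu,\kappa}$ at the special point $(\lambda;\tau,\eta)=(4\eta;-8\eta,\eta)$ down to the Felder--Varchenko integral $\wI_{\mu+2,\kappa}$, then to recognize that integral — using theta-function identities available precisely because $\lambda=4\eta$ and $\tau=-8\eta$ — as an instance of Spiridonov's elliptic beta integral \cite{Spi00}, and finally to evaluate and simplify. Concretely, I would first substitute the definitions of $P_{\mu,\kappa}$, $\Delta_{\mu+2,\kappa}$, $\wDelta_{\mu+2,\kappa}$, $Q$, and $\wI_{\mu+2,\kappa}$, all specialized to $\lambda=4\eta$, $\tau=-8\eta$. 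This writes $P_{\mu,\kappa}(4\eta;-8\eta,\eta)$ as an explicit product of exponential and theta/$q$-Pochhammer prefactors — coming from the factors $e^{-\pi i\frac{4\eta+\tau}{2\kappa}(\mu+2)^2+\pi i 3\tau/4}$, $e^{\frac{2\pi i\eta}{\kappa}(\mu+2)^2}$, $e^{\pi i\tau\frac{(\mu+2)^2}{2\kappa}-\pi i\lambda(\mu+2)}$, the value $Q(2\eta(\mu+2);-2\kappa\eta,\eta)$, the factor $(2\kappa\tau;2\kappa\tau)$, and the triple product $\theta(2\eta;-8\eta)\theta(4\eta;-8\eta)\theta(6\eta;-8\eta)$ in the denominator of $P$ — times the difference $\wI_{\mu+2,\kappa}(4\eta;-8\eta,\eta)-\wI_{\mu+2,\kappa}(-4\eta;-8\eta,\eta)$ of two contour integrals of the integrand appearing in the definition of $\wI$.

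The heart of the proof is transforming that integrand. The only piece not already of elliptic-gamma/theta type with the periods $\{\tau,-2\kappa\eta\}$ is the factor $\theta_0(1/2+(\mu+2)\tau+\kappa\tau-\kappa\lambda+2t;2\kappa\tau)$, whose argument has $t$-coefficient $2$ and whose modulus $2\kappa\tau$ is a power-of-two multiple of $-2\kappa\eta$ once $\tau=-8\eta$. I would use the doubling relation $\theta_0(2u;2\sigma)=\theta_0(u;\sigma)\theta_0(u+1/2;\sigma)$, iterated and combined with the arithmetic relations among $\tau$, $\kappa\tau$, $2\kappa\tau$, and $-2\kappa\eta$ together with a rescaling of the integration variable, to rewrite this factor — combined with $\theta(t+2(\mu+2)\eta;-2\kappa\eta)/\theta(t-2\eta;-2\kappa\eta)$ — as a product of elliptic gamma functions. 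Writing every $\theta$ as a ratio $\Gamma(x+\sigma;\sigma,\tau)/\Gamma(x;\sigma,\tau)$ and simplifying via the reflection equation $\Gamma(x;\sigma,\tau)\Gamma(\sigma+\tau-x;\sigma,\tau)=1$, the integrand becomes (after the change of variable $z=e^{2\pi it}$) the $BC_1$ elliptic beta integrand $\Gamma(z^{\pm2};-2\kappa\eta,-8\eta)^{-1}\prod_{j=1}^6\Gamma(a_jz^{\pm1};-2\kappa\eta,-8\eta)$ for an explicit sextuple $a_1,\dots,a_6$ depending on $\mu,\kappa,\eta$, with the balancing condition (that $\sum_j a_j$ equals the sum of the two periods modulo the lattice) holding thanks to $\lambda=4\eta$, and with the contour $\gamma$ either already the contour of \cite{Spi00} or deformable to it across no poles. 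I expect this matching step — pinning down the $a_j$, verifying the balancing condition exactly, and checking the contour — to be the main obstacle, since it requires carefully tracking which theta arguments collapse and absorbing the resulting proliferation of $q$-Pochhammer prefactors.

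With the integrand in this form, Spiridonov's evaluation \cite{Spi00} gives $\wI_{\mu+2,\kappa}(4\eta;-8\eta,\eta)$ as a constant times $\prod_{1\le j<k\le6}\Gamma(a_ja_k;-2\kappa\eta,-8\eta)$. Most of the fifteen arguments $a_ja_k$ are congruent modulo the period lattice to $0$, $-2\kappa\eta$, $-8\eta$, or $-2\kappa\eta-8\eta$, so the corresponding gamma values reduce to $1$ or to elementary $q$-Pochhammer ratios by the reflection and shift equations; after cancellation the nontrivial survivors are exactly one ratio $\Gamma(-6\eta;-2\kappa\eta,-8\eta)/\Gamma(-2\eta;-2\kappa\eta,-8\eta)$ and one $\theta_0(2(\mu+2)\eta;-2\kappa\eta)$, which carries all of the $\mu$-dependence. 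It then remains to reassemble this with the prefactors from the first step — checking in particular that the $(\mu+2)^2$-terms produced by the quasi-periodicity of the gamma factors cancel those in $e^{\frac{2\pi i\eta}{\kappa}(\mu+2)^2}$ and $e^{-\pi i\frac{4\eta+\tau}{2\kappa}(\mu+2)^2}$, leaving only the linear exponential $e^{-2\pi i(\mu+2)\eta}$ and the overall $e^{-12\pi i\eta}$ — and to handle $\wI_{\mu+2,\kappa}(-4\eta;-8\eta,\eta)$ by the same route. I anticipate that this second integral either vanishes (its contour can be collapsed once the $\lambda$-dependent zeros meet the poles of the integrand) or differs from the first only by an elementary factor, so that the difference defining $\Delta_{\mu+2,\kappa}$ collapses to the single closed form in the statement. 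This runs parallel to the computation behind Theorem \ref{thm:ellmac-val}, and the concluding $q$-Pochhammer bookkeeping is the same delicate-but-routine step as there.
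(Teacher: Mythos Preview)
Your plan diverges from the paper's route at the key analytic step, and the divergence is a genuine gap rather than an alternative. You propose to massage the single integral $\wI_{\mu+2,\kappa}(4\eta;-8\eta,\eta)$ directly into the form of one elliptic beta integral with $\mu$-dependent parameters $a_1,\dots,a_6$ and periods $-2\kappa\eta,-8\eta$. But the integrand carries the factor $\theta_0(1/2+(\mu+2)\tau+\kappa\tau-\kappa\lambda+2t;2\kappa\tau)$, which at $\tau=-8\eta$ has modulus $-16\kappa\eta$; this is neither of the intended periods, and your iterated doublings (plus rescaling of $t$) will not bring it into the $\Gamma(\pm 2t)^{-1}\prod_j\Gamma(a_j z^{\pm 1})$ shape with those periods while keeping the balancing condition for all $\mu$. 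The paper never claims the unsymmetrized $\wI$ is a beta integral, and the structure of the answer (the factor $\theta_0(2(\mu+2)\eta;-2\kappa\eta)$, not a 15-fold product of $\Gamma$'s in $\mu$) already hints that no single $\mu$-parametrized beta evaluation is doing the job. Your expectation about $\wI_{\mu+2,\kappa}(-4\eta;-8\eta,\eta)$ is also off: it neither vanishes nor differs from the $+4\eta$ integral by an elementary factor; rather, after the substitution $t\mapsto -t$ it becomes $\wI$ evaluated at $-2(\mu+2)\eta$, so that the antisymmetrization in $\lambda$ defining $\Delta$ becomes the antisymmetrization $I(\lambda)=\wI(\lambda)-\wI(-\lambda)$ of the auxiliary integral (\ref{eq:asym-int}).

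What the paper actually does is to push all the work into evaluating that symmetrized integral $I(\lambda;\tau,\eta)$ for general $\lambda$ (Theorem~\ref{thm:eval3}), and that evaluation requires an idea your plan does not contain. One first rewrites the integrand as a product $\wJ_1(t)\wJ_2(t,\lambda)$ with $\wJ_1$ even in $t$; the $t\mapsto -t$ invariance of the contour then lets one replace $\wJ_2$ by its full symmetrization in $(t,\lambda)$. A theta identity (Lemma~\ref{lem:full-sym}) expresses this symmetrized piece as a $\lambda$-dependent linear combination of two $\lambda$-\emph{independent} factors, so $I(\lambda)$ becomes $A(\lambda)I_1+B(\lambda)I_2$ with $I_1,I_2$ depending only on $(\tau,\eta)$. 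Each of $I_1,I_2$ is then, after nontrivial manipulation (Lemmas~\ref{lem:int-eval1}, \ref{lem:int-eval2}), an elliptic beta integral with periods $2\tau$ and $8\eta$ --- so two applications of Spiridonov's formula are needed, not one. Finally, one checks directly that the combination $I_0(\lambda)=A(\lambda)I_1+B(\lambda)I_2$ vanishes at $\lambda=2\eta$ (and by symmetry at $-2\eta$); since $I_0$ is a degree-two theta function in $\lambda$, this pins down $I(\lambda)$ as the product of three explicit thetas in $\lambda$ times a computable constant. It is this ``separate the $\lambda$-dependence, evaluate two beta integrals, then use a forced zero'' mechanism --- absent from your outline --- that produces the closed form.
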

}

\subsection{Theta hypergeometric integral evaluations}

Our proofs of Theorems \ref{thm:ellmac-val} and \ref{thm:ellmac-eval} are based on a single theta hypergeometric integral evaluation, which may be of independent interest.  For modular parameters $\tau, \eta$ with $\Imm(\tau), \Imm(\eta) > 0$, consider the theta hypergeometric integral
\[
\wI(\lambda; \tau, \eta) := e^{-3\pi i \lambda} \int_\gamma \frac{\Gamma(t - 2\eta; \tau, 8\eta)}{\Gamma(t + 2 \eta; \tau, 8\eta)} \frac{\theta_0(t + \lambda; \tau)}{\theta_0(t + 2\eta; \tau)} \frac{\theta_0(t - 4\eta; 8\eta)}{\theta_0(t + 2\eta; 8\eta)} \theta_0(2t + 6\tau - 4\lambda + 1/2; 8\tau) dt,
\]
where the cycle $\gamma$ travels from $-1/2$ to $1/2$, lies above the pole at $t = -2 \eta$ and below the pole at $t = 2 \eta$, and separates all other poles above and below the real line.  Define the symmetrization
\[
I(\lambda; \tau, \eta) := \wI(\lambda; \tau, \eta) - \wI(-\lambda; \tau, \eta).
\]
The core technical tool of this paper consists of Theorem \ref{thm:eval3}, which gives an explicit evaluation for this integral based on manipulations of theta functions and well-chosen applications of the elliptic beta integral of \cite{Spi00}.
{\renewcommand{\thethm}{\ref{thm:eval3}}
\begin{thm}
We have the expression
\[
I(\lambda; \tau, \eta) = e^{-12\pi i \eta} \frac{\Gamma(6\eta; \tau, 8\eta)}{\Gamma(2\eta; \tau, 8\eta)}\frac{1}{(8\tau; 8\tau)\theta_0(- 4\eta; \tau)}  \frac{1}{(4\eta; 4\eta) (2\eta + 1/2; 2\eta)}  e^{-3\pi i \lambda} \theta_0(\lambda; \tau) \theta_0(\lambda - 2\eta; \tau) \theta_0(\lambda + 2\eta; \tau).
\]
\end{thm}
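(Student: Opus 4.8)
The plan is to match the integrand of $\wI(\lambda;\tau,\eta)$, up to a factor independent of the integration variable $t$, with the integrand of Spiridonov's elliptic beta integral \cite{Spi00} for a well-chosen sextuple of parameters, so that the symmetrization $I(\lambda;\tau,\eta) = \wI(\lambda;\tau,\eta) - \wI(-\lambda;\tau,\eta)$ assembles exactly into the period integral evaluated in \cite{Spi00}. The evaluation is then immediate, and what remains is to collapse the resulting product of elliptic Gamma functions into the stated closed form.

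\textbf{Rewriting the integrand.} First I would convert the three theta quotients in the integrand of $\wI$ into quotients of elliptic Gamma functions, using the shift equations $\Gamma(z+\tau;\tau,8\eta) = \theta_0(z;8\eta)\Gamma(z;\tau,8\eta)$ and $\Gamma(z + 8\eta;\tau,8\eta) = \theta_0(z;\tau)\Gamma(z;\tau,8\eta)$; for instance $\theta_0(t+\lambda;\tau) = \Gamma(t+\lambda+8\eta;\tau,8\eta)/\Gamma(t+\lambda;\tau,8\eta)$, and likewise for the $\theta_0(t+2\eta;\tau)$, $\theta_0(t-4\eta;8\eta)$, $\theta_0(t+2\eta;8\eta)$ factors. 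The delicate piece is the ``modular'' theta function $\theta_0(2t+6\tau-4\lambda+1/2;8\tau)$: here I would use the quadratic identity $\theta_0(2w;2\sigma) = \theta_0(w;\sigma)\theta_0(w+1/2;\sigma)$, together with the base-decomposition identity $\Gamma(z;\tau,8\eta) = \prod_{m=0}^{7}\Gamma(z+m\tau;8\tau,8\eta)$ for elliptic Gammas and, if needed, the Jacobi triple product, to bring every factor onto a common footing. The outcome of this bookkeeping should be an identity of the form $[\text{integrand of }\wI] = c(\tau,\eta,\lambda)\cdot \prod_{j=1}^{6}\Gamma(s_j+t)\Gamma(s_j-t)/(\Gamma(2t)\Gamma(-2t))$, with an explicit sextuple $s_1,\dots,s_6$ affine-linear in $\eta$ and $\lambda$; the precise constants $6\tau$, $-4\lambda$, $1/2$ appearing in the modular theta are exactly what is needed to force the balancing condition $\sum_j s_j \equiv \tau + 8\eta$, which I would verify directly. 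It is plausible that the reduction naturally splits into two applications of the elliptic beta integral (as the phrase ``well-chosen applications'' suggests), in which case each piece is treated the same way.

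\textbf{Contour matching and evaluation.} Next I would check that the cycle $\gamma$ --- running from $-1/2$ to $1/2$, above the pole at $t=-2\eta$, below the pole at $t=2\eta$, and separating the remaining poles above and below $\RR$ --- together with the antisymmetrization in $\lambda$, reproduces the contour of \cite{Spi00}: the integrand above is invariant under $t\mapsto -t$ only after also sending $\lambda\mapsto -\lambda$, so $\wI(\lambda;\tau,\eta) - \wI(-\lambda;\tau,\eta)$ is precisely $\int_\gamma - \int_{-\gamma}$ of the $t$-symmetric integrand, i.e.\ the full period integral with the pole-separating contour of \cite{Spi00}. One must verify that for $\Imm(\tau),\Imm(\eta)>0$ and generic $\lambda$ the hypotheses of the elliptic beta integral are met --- the parameters lie in the requisite region and the contour separates the geometric progressions of poles correctly, with the poles $\gamma$ is required to separate matching those of the numerator $\Gamma$-factors --- and since both sides are holomorphic in $\eta$ and $\lambda$, it suffices to check this on an open set and conclude by analytic continuation. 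Applying Spiridonov's evaluation then gives $I(\lambda;\tau,\eta) = c(\tau,\eta,\lambda)\cdot (\text{Spiridonov normalization})\cdot \prod_{j<k}\Gamma(s_j+s_k;\tau,8\eta)$, and I would collapse this product using the reflection equation $\Gamma(z;\tau,8\eta)\Gamma(\tau+8\eta-z;\tau,8\eta)=1$: most of the fifteen Gamma factors cancel in complementary pairs, the $\lambda$-dependent ones recombine via the shift equations into $\theta_0(\lambda;\tau)\theta_0(\lambda-2\eta;\tau)\theta_0(\lambda+2\eta;\tau)$, and the $\lambda$-independent ones leave $\Gamma(6\eta;\tau,8\eta)/\Gamma(2\eta;\tau,8\eta)$ together with the products $(8\tau;8\tau)$, $(4\eta;4\eta)$, $(2\eta+1/2;2\eta)$, $\theta_0(-4\eta;\tau)$ --- these last coming from the Spiridonov normalization rewritten via the quadratic and base-decomposition identities and from theta values at half-periods. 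The surviving exponential $e^{-12\pi i\eta}e^{-3\pi i\lambda}$ is what remains of $c(\tau,\eta,\lambda)$ once all the quasi-periodicity multipliers picked up along the way are accounted for.

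\textbf{Main obstacle.} The hard part is the bookkeeping in the second step: decomposing the modular theta correctly (the half-integer shifts and base changes are easy to get slightly wrong), guessing the correct six parameters $s_j$, and verifying the balancing condition --- an error in any one of these propagates through the whole simplification. A secondary difficulty is the contour/pole matching: being certain that, after the manipulations, the poles $\gamma$ separates are precisely the $\Gamma$-poles that Spiridonov's contour separates and that no spurious pole is crossed; this is where the hypotheses $\Imm(\tau),\Imm(\eta)>0$, the genericity of $\lambda$, and analytic continuation come in.
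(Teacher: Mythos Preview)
Your central structural claim --- that the integrand of $\wI(\lambda;\tau,\eta)$ is invariant under the joint substitution $(t,\lambda)\mapsto(-t,-\lambda)$, so that $\wI(\lambda)-\wI(-\lambda)$ becomes a full period of a $t$-symmetric elliptic-beta integrand with a $\lambda$-dependent sextuple $(s_j)$ --- is not correct. After the paper's own rewriting (its Lemma~2.5), the integrand factors as $\wJ_1(t,\tau,\eta)\,\wJ_2(t,\lambda,\tau)$ with $\wJ_1$ symmetric in $t$, but $\wJ_2(t,\lambda,\tau)\neq\wJ_2(-t,-\lambda,\tau)$; the paper in fact computes the difference explicitly (Lemma~2.6) and it is nonzero. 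Consequently there is no single choice of six parameters, affine in $\lambda$, that puts the integrand into Spiridonov form, and your contour argument (``$\int_\gamma-\int_{-\gamma}$ of a $t$-symmetric integrand'') does not go through.

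What the paper actually does is more indirect and involves an extra idea you did not anticipate. Using the $t$-symmetry of $\wJ_1$ and of $\gamma$, one replaces $\wJ_2(t,\lambda)$ by its full $(t,\lambda)$-antisymmetrization, and then a nontrivial theta identity (Lemmas~2.6--2.7) expresses that antisymmetrization as a sum of two terms, each of the form $(\text{$\lambda$-dependent theta coefficient})\times(\text{$\lambda$-independent function of $t$})$. Each of the two $\lambda$-independent pieces is then matched to an elliptic beta integral with modular parameters $(2\tau,8\eta)$ and a sextuple containing \emph{no} $\lambda$ at all (Lemmas~2.11--2.12). This yields $I(\lambda;\tau,\eta)$ as an explicit degree-two theta function in $\lambda$, but not yet in factored form: one still has to locate its zeros by checking directly that the combination vanishes at $\lambda=2\eta$ (a separate theta computation), deduce the zero at $-2\eta$ by the built-in antisymmetry, and finally fix the $\lambda$-independent multiplicative constant by evaluating at $\lambda=1/2$. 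Your proposal misses both the theta decomposition that separates $\lambda$ from $t$ before applying Spiridonov, and this subsequent ``identify the theta function by its zeros and one value'' step.
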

}

\subsection{Organization of paper}

The remainder of this paper is organized as follows.  In Section \ref{sec:int-eval}, we prove the theta hypergeometric integral evaluation of Theorem \ref{thm:eval3} as well as two easier evaluations involving Felder-Varchenko functions.  In Section \ref{sec:fv}, we explain Felder-Varchenko functions and elliptic Macdonald polynomials and prove Theorems \ref{thm:fv-val1}, \ref{thm:fv-val2}, \ref{thm:ellmac-val}, and \ref{thm:ellmac-eval} giving certain special values for them using the integral evaluations in Section \ref{sec:int-eval}.  In Section \ref{sec:aff-mac}, we use Proposition \ref{prop:fv-conj} to prove Theorems \ref{thm:f-val} and \ref{thm:eval-conj} on the first cases of the affine Macdonald denominator and evaluation conjectures.  We then combine them with evidence from computer computations to state Conjecture \ref{conj:f-val-conj} on the affine Macdonald denominator and Conjecture \ref{conj:aff-eval} on the affine Macdonald evaluation.  We conclude by discussing our conjectures in the classical, affine Hall, and critical limits.

\subsection{Acknowledgments}

Y.S. and A.V. thank the Max-Planck-Institut f\"ur Mathematik in Bonn for providing excellent working conditions.  Y.S. thanks P. Etingof for many helpful discussions.  E.M.R. was partially supported by NSF grant DMS-1500806.  This work was partially supported by a Junior Fellow award from the Simons Foundation to Yi Sun.  A.V. was partially supported by NSF grant DMS-1362924 and Simons Foundation grant \#336826.

\section{Three elliptic hypergeometric integral evaluations} \label{sec:int-eval}

In this section, we present three evaluations of elliptic hypergeometric integrals which arise in the study of Felder-Varchenko functions.  Our method rests upon applications of the elliptic beta integral of \cite{Spi00} after rearrangement of the integrand.

\subsection{Notations and elliptic functions}

We now give our conventions and notations for $q$-Pochhammer symbols, theta functions, and elliptic gamma functions.  Because our techniques connect two streams of work which use both additive and multiplicative notations in a critical way, we will abuse notation and use both notations; in all cases, which one is meant will be evident from the context.  We have labeled which of multiplicative or additive notations are used in this section, but we will omit these in the main text to avoid obscuring the notation.  The remainder of Section \ref{sec:int-eval} after this subsection will only use additive notation.

\subsubsection{Theta functions}

We use the single and double $q$-Pochhammer symbols, denoted in multiplicative notation by 
\[
(u; q)_\text{mult} := \prod_{n \geq 0} (1 - u q^n) \qquad \text{ and } \qquad (u; q, r)_\text{mult} := \prod_{n, m \geq 0} (1 - u q^n r^m)
\]
for $|q|, |r| < 1$ and in additive notation by
\[
(z; \tau)_\text{add} := \prod_{n \geq 0} (1 - e^{2\pi i z + 2\pi i \tau n}) \qquad \text{ and } \qquad (z; \tau, \sigma)_\text{add} := \prod_{n, m \geq 0} (1 - q^{2\pi i z + 2\pi i \tau n + 2\pi i \sigma m})
\]
for $\Imm(\tau), \Imm(\sigma) > 0$.  Define the theta function in multiplicative and additive notation by
\[
\theta_0^\text{mult}(u; q) := (u; q)(u^{-1}q; q) \qquad \text{ and } \qquad \theta_0^\text{add}(z; \tau) := (z; \tau)(\tau - z; \tau).
\]
Define Jacobi's first theta function, which will appear only in additive notation, by
\[
\theta(z; \tau) := i e^{\pi i \tau/4 - \pi i z}(\tau; \tau) \theta_0(z; \tau).
\]
It satisfies the modular relation
\begin{equation} \label{eq:theta-mod}
\theta(z/\tau; -1/\tau) = -i \sqrt{-i\tau} e^{\frac{\pi i z^2}{\tau}} \theta(z; \tau),
\end{equation}
where the square root takes values in the right half plane.

\subsubsection{Elliptic gamma functions}

Define the elliptic gamma function in multiplicative and additive notation by
\[
\Gamma^\text{mult}(u; q, r) := \frac{(u^{-1} qr; q, r)}{(u; q, r)} \qquad \text{ and } \qquad \Gamma^\text{add}(z; \tau, \sigma) = \frac{(\tau + \sigma - z; \tau, \sigma)}{(z; \tau, \sigma)}.
\]
In \cite{FV3}, it was shown to have the following three-term modular relation of $SL(3, \ZZ)$-type.

\begin{prop}[{\cite[Theorem 4.1]{FV3}}] \label{prop:sl3-mod}
The elliptic gamma function satisfies the modular relation
\begin{equation} \label{eq:ellgam-mod}
\Gamma(z/\sigma; \tau/\sigma, -1/\sigma) = e^{\pi i Q(z; \tau, \sigma)} \Gamma((z - \sigma)/\tau; -1/\tau, -\sigma/\tau) \Gamma(z; \tau, \sigma),
\end{equation}
where 
\[
Q(z; \tau, \sigma) = \frac{z^3}{3\tau \sigma} - \frac{\tau + \sigma - 1}{2 \tau \sigma} z^2 + \frac{\tau^2 + \sigma^2 + 3 \tau \sigma - 3 \tau - 3 \sigma + 1}{6 \tau \sigma} z + \frac{1}{12}(\tau + \sigma - 1)(\tau^{-1} + \sigma^{-1} - 1).
\]
\end{prop}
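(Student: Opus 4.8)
The plan is to treat both sides of \eqref{eq:ellgam-mod} as meromorphic functions of $z$ with $\tau, \sigma$ held fixed, and to show they satisfy the same pair of first-order difference equations in $z$, so that their ratio is an elliptic function; a divisor count then forces this ratio to be a nonzero constant, which an asymptotic computation identifies as $1$. Write $L(z) := \Gamma(z/\sigma; \tau/\sigma, -1/\sigma)$ and $R(z) := e^{\pi i Q(z; \tau, \sigma)}\Gamma((z - \sigma)/\tau; -1/\tau, -\sigma/\tau)\Gamma(z; \tau, \sigma)$ for the two sides. Since the product defining $\Gamma(w; \tau', \sigma')$ converges only when $\Imm(\tau'), \Imm(\sigma') > 0$, and the parameters $\tau/\sigma, -1/\sigma$ and $-1/\tau, -\sigma/\tau$ need not satisfy this, I would first fix a region of $(\tau, \sigma)$ on which every gamma factor converges, prove the identity there, and extend to the full domain by analytic continuation.

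The starting point is the pair of fundamental difference equations
\[
\Gamma(z + \tau; \tau, \sigma) = \theta_0(z; \sigma)\,\Gamma(z; \tau, \sigma), \qquad \Gamma(z + \sigma; \tau, \sigma) = \theta_0(z; \tau)\,\Gamma(z; \tau, \sigma),
\]
which follow directly from the product definition by telescoping one index, together with the obvious periodicity $\Gamma(z + 1; \tau, \sigma) = \Gamma(z; \tau, \sigma)$. Applying these to each gamma factor, I would compute the multipliers of $L$ and $R$ under the two shifts $z \mapsto z + \tau$ and $z \mapsto z - 1$. Under $z \mapsto z + \tau$ the factor $\Gamma((z - \sigma)/\tau; \cdots)$ shifts by the integer period $1$ and so is unchanged, while $\Gamma(z; \tau, \sigma)$ contributes $\theta_0(z; \sigma)$; under $z \mapsto z - 1$ the roles reverse and $\Gamma((z - \sigma)/\tau; \cdots)$ contributes $\theta_0((z - \sigma)/\tau; -\sigma/\tau)$. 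On the $L$ side the same two shifts produce $\theta_0(z/\sigma; -1/\sigma)$ and $\theta_0(z/\sigma; \tau/\sigma)$ respectively. Matching therefore reduces to the two identities
\[
\theta_0(z/\sigma; -1/\sigma) = e^{\pi i (Q(z + \tau) - Q(z))}\,\theta_0(z; \sigma), \qquad \theta_0(z/\sigma; \tau/\sigma) = e^{\pi i (Q(z - 1) - Q(z))}\,\theta_0((z - \sigma)/\tau; -\sigma/\tau).
\]

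The heart of the argument — and the step I expect to be the main obstacle — is verifying these two identities, which is precisely where the cubic $Q$ is forced. Each is an instance of the theta modular relation \eqref{eq:theta-mod} (after converting $\theta_0$ to Jacobi's $\theta$ via $\theta(z; \tau) = i e^{\pi i \tau/4 - \pi i z}(\tau; \tau)\theta_0(z; \tau)$), which supplies a Gaussian factor in $z$, namely $e^{\pi i z^2/\sigma}$ in the first identity and $e^{-\pi i z^2/(\sigma \tau)}$ in the second, together with a branch of $\sqrt{-i\,\cdot}$. A direct computation of $Q(z + \tau) - Q(z)$ and $Q(z - 1) - Q(z)$ shows that their $z^2$ and $z^1$ coefficients reproduce exactly these Gaussian and linear contributions — this is what pins down the cubic, quadratic, and linear coefficients of $Q$ — while the $z$-independent term of $Q$ cancels in both differences and so is invisible at this stage. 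The delicate bookkeeping lies in matching the constant prefactors: the ratio $(\sigma; \sigma)/(-1/\sigma; -1/\sigma)$ is governed by the modular transformation of the Dedekind eta function, and one must check that the resulting $\sqrt{-i\sigma}$ and root-of-unity factors combine consistently, with the square root taken in the right half-plane as in \eqref{eq:theta-mod}.

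Granting the two difference equations, the ratio $\phi(z) := L(z)/R(z)$ is invariant under $z \mapsto z + \tau$ and $z \mapsto z + 1$, hence elliptic with periods $\tau$ and $1$. Comparing the zero and pole divisors of $L$ and $R$ in a single fundamental domain — these must coincide because the two sides carry identical quasi-periodicity and because all zeros and poles arise only from the gamma factors — shows $\phi$ is holomorphic and nonvanishing, so $\phi$ is constant by Liouville's theorem. Finally I would fix this constant, and with it the remaining $z$-independent term of $Q$, by an asymptotic analysis as $\Imm(z) \to +\infty$, where the triple products defining the gamma factors tend to controlled limits and the constant is forced to equal $1$, completing the proof of \eqref{eq:ellgam-mod}.
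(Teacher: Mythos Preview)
The paper does not supply its own proof of this proposition: it is quoted verbatim as \cite[Theorem 4.1]{FV3} and used as a black box. So there is no in-paper argument to compare against; the relevant benchmark is the original Felder--Varchenko proof.

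Your strategy is exactly the one used in \cite{FV3}: reduce the three-term gamma relation to a pair of theta modular identities by comparing the multipliers of both sides under $z \mapsto z + \tau$ and $z \mapsto z - 1$, then argue that the ratio is an entire elliptic function of $z$ and pin down the constant. The difference-equation bookkeeping you wrote out is correct, and the two theta identities you isolate are precisely the ones that determine the quadratic and linear coefficients of $Q$.

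One point deserves tightening. You write that the divisors of $L$ and $R$ ``must coincide because the two sides carry identical quasi-periodicity.'' That inference is not valid on its own: two functions with the same quasi-periods can differ by any elliptic function, so matching multipliers says nothing about divisors. What actually works is a direct comparison of the zero and pole lattices read off from the infinite products $\Gamma(w;\tau',\sigma') = (\tau'+\sigma'-w;\tau',\sigma')/(w;\tau',\sigma')$: one checks that the pole set $\{z = -m\tau + n : m,n \ge 0\}$ of $L$ coincides set-theoretically (with multiplicity) with the union of the pole sets of the two gamma factors on the right, and similarly for zeros. This is an elementary but necessary lattice computation, and it is what \cite{FV3} does. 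Once that is in hand, your Liouville step and the asymptotic determination of the constant term of $Q$ go through as you describe.
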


\subsubsection{Product notation}

We will often use the presence of multiple arguments before the modular parameter to indicate a product of multiple factors.  For example, we have that
\[
\Gamma(\pm z; \tau, \sigma) := \Gamma(z; \tau, \sigma) \Gamma(-z; \tau, \sigma)
\]
and that 
\[
\theta_0(u^{\pm}, v^{\pm}; q) := \theta_0(u; q) \theta_0(u^{-1}; q) \theta_0(v; q) \theta_0(v^{-1}; q).
\]

\subsection{Evaluations of the first kind}

The first evaluations we consider are related to Felder-Varchenko functions specialized at particular values.  They take the following form.

\begin{thm} \label{thm:eval1}
We have that 
\[
\int_\gamma \frac{\Gamma(t + 1/4; \tau, \sigma)}{\Gamma(t - 1/4; \tau, \sigma)} \frac{\theta_0(t + 1/2; \tau)}{\theta_0(t - 1/4; \tau)} \frac{\theta_0(t + 1/2; \sigma)}{\theta_0(t - 1/4; \sigma)} dt = -(1 + i) \frac{\Gamma(1/4; \tau, \sigma)}{\Gamma(3/4; \tau, \sigma)} \frac{1}{(\tau; \tau)(\tau+1/2; 2\tau)} \frac{1}{(\sigma; \sigma)(\sigma+1/2; 2\sigma)},
\]
where the cycle $\gamma$ travels from $-\frac{1}{2}$ to $\frac{1}{2}$, lies above the pole at $t = -1/4$ and below the pole at $t = 1/4$, and separates all other poles above and below the real axis.
\end{thm}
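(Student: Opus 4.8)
The plan is to rearrange the integrand into the integrand of Spiridonov's elliptic beta integral and then to simplify the resulting product of elliptic gamma functions; as in the rest of this section I work in additive notation. The first step is to eliminate the theta quotients. Directly from the definitions of the single and double $q$-Pochhammer symbols one has the quasi-periodicity relations $\Gamma(z+\sigma;\tau,\sigma)=\theta_0(z;\tau)\,\Gamma(z;\tau,\sigma)$ and $\Gamma(z+\tau;\tau,\sigma)=\theta_0(z;\sigma)\,\Gamma(z;\tau,\sigma)$, so that $\theta_0(z;\tau)=\Gamma(z+\sigma;\tau,\sigma)/\Gamma(z;\tau,\sigma)$ and $\theta_0(z;\sigma)=\Gamma(z+\tau;\tau,\sigma)/\Gamma(z;\tau,\sigma)$. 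Substituting these into the factors $\tfrac{\theta_0(t+1/2;\tau)}{\theta_0(t-1/4;\tau)}$ and $\tfrac{\theta_0(t+1/2;\sigma)}{\theta_0(t-1/4;\sigma)}$ turns the integrand $F(t)$ into a ratio of products of elliptic gamma functions $\Gamma(\,\cdot\,;\tau,\sigma)$, all of whose arguments are affine-linear in $t$.

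Next I would massage $F$ into the elliptic beta integrand. Recall the elliptic beta integral of \cite{Spi00}, which in additive notation reads
\[
\frac{(\tau;\tau)(\sigma;\sigma)}{2}\int_\gamma \frac{\prod_{j=1}^{6}\Gamma(a_j\pm t;\tau,\sigma)}{\Gamma(\pm 2t;\tau,\sigma)}\,dt \;=\; \prod_{1\le j<k\le 6}\Gamma(a_j+a_k;\tau,\sigma), \qquad \textstyle\sum_{j=1}^{6}a_j\equiv \tau+\sigma,
\]
valid for a contour $\gamma$ from $-\tfrac12$ to $\tfrac12$ separating the poles at $t\in -a_j-\tau\ZZ_{\ge 0}-\sigma\ZZ_{\ge 0}$ from those at $t\in a_j+\tau\ZZ_{\ge 0}+\sigma\ZZ_{\ge 0}$. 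Since $F(t)$ is not symmetric under $t\mapsto -t$, I would first note that the prescribed contour $\gamma$ is itself invariant under $t\mapsto -t$ (the rule ``above $-1/4$, below $1/4$'' is preserved), and that no poles of $F$ lie in the thin region swept between $\gamma$ and its reflection, so that $\int_\gamma F(t)\,dt=\tfrac12\int_\gamma\big(F(t)+F(-t)\big)\,dt$. Using the reflection formula $\Gamma(z;\tau,\sigma)\Gamma(\tau+\sigma-z;\tau,\sigma)=1$, the halving identity $\theta_0(z;\tau)\theta_0(z+\tfrac12;\tau)=\theta_0(2z;2\tau)$ together with its $\sigma$-analogue, and a three-term theta relation to combine the two summands, one collapses $F(t)+F(-t)$ to the elliptic beta integrand with six explicit parameters $a_j$ assembled from $\tfrac14$, $\tfrac12$, $\tfrac{\tau}{2}$ and $\tfrac{\sigma}{2}$; the balancing condition $\sum_j a_j\equiv\tau+\sigma$ then holds automatically, and the contour condition reduces exactly to the prescribed separation of the poles at $t=\pm\tfrac14$.

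Finally, the elliptic beta integral evaluates $\int_\gamma F(t)\,dt$ as a constant times $\prod_{1\le j<k\le 6}\Gamma(a_j+a_k;\tau,\sigma)$, and I would reduce this product to the stated right-hand side by repeated use of the reflection formula, the halving identity, and the product expansions of the single and double Pochhammer symbols: the factor $\tfrac12$ from the symmetrization and the $(\tau;\tau)(\sigma;\sigma)$ from the elliptic beta integral produce the $\tfrac{1}{(\tau;\tau)(\sigma;\sigma)}$; the Pochhammer parts of the gamma values at half-period arguments produce the $\tfrac{1}{(\tau+1/2;2\tau)(\sigma+1/2;2\sigma)}$; one pair of parameters gives $\Gamma(1/4;\tau,\sigma)/\Gamma(3/4;\tau,\sigma)$; and the numerical prefactor $-(1+i)$ comes out of the evaluation of theta functions at quarter-periods (the factor $1-i$ from the $n=0$ term in the product expansion of $\theta_0(\tfrac14;\cdot)$). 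I expect the main obstacle to be the middle step: guessing the correct assignment of the six parameters $a_j$ and the precise three-term theta identity effecting the symmetrization of $F$, and then checking that $\gamma$ is exactly the contour for which the elliptic beta integral applies with no residual terms. Everything after that is routine theta-function bookkeeping, although it may prove cleaner to break the rearrangement into two successive applications of the elliptic beta integral rather than one.
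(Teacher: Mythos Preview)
Your overall strategy---reduce to Spiridonov's elliptic beta integral and then simplify---is exactly what the paper does, but the crucial middle step in your plan is not the one that works, and as written it is a genuine gap.

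You propose to symmetrize $F(t)\mapsto\tfrac12(F(t)+F(-t))$ and then invoke an unspecified three-term theta identity to collapse the result into a beta integrand with modular parameters $(\tau,\sigma)$ and six parameters ``assembled from $\tfrac14,\tfrac12,\tfrac{\tau}{2},\tfrac{\sigma}{2}$.'' But you never identify those six parameters, never state which three-term relation you mean, and in fact the integrand does not fit the beta form with modular pair $(\tau,\sigma)$: the pole structure of $F$ (simple strings along $t=\tfrac14+\tau\ZZ$ and $t=\tfrac14+\sigma\ZZ$ from the theta denominators, together with the double lattice from the gamma ratio) is not that of $\prod_j\Gamma(a_j\pm t;\tau,\sigma)/\Gamma(\pm 2t;\tau,\sigma)$. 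So the ``collapse'' you describe cannot happen as stated.

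What the paper actually does is different and is the key idea you are missing. One first substitutes $t\mapsto 2t+\tfrac12$, using $1$-periodicity to keep the same integral, which turns the theta quotients into $\theta_0(2t;\tau)/\theta_0(2t+\tfrac14;\tau)$ and similarly in $\sigma$. Then---and this is the non-obvious step---one works with the elliptic beta integral for the \emph{halved} modular pair $(\tau/2,\sigma)$, writing $\Gamma(z):=\Gamma(z;\tau/2,\sigma)$. The duplication/halving identities give
\[
\Gamma(\pm 2t-\tfrac14;\tau,\sigma)=\Gamma(\pm t-\tfrac18)\Gamma(\pm t+\tfrac38)\Gamma(\pm t+\tfrac{\sigma}{2}-\tfrac18)\Gamma(\pm t+\tfrac{\sigma}{2}+\tfrac38),
\]
and a short computation shows $\theta_0(2t;\tau)\theta_0(2t;\sigma)=-e^{4\pi i t}\,\Gamma(\pm t+\tfrac{\tau}{4})\Gamma(\pm t+\tfrac{\tau}{4}+\tfrac12)/\Gamma(\pm 2t)$. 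After these substitutions the integrand is \emph{exactly} the beta integrand with the explicit six parameters
\[
(s_1,\dots,s_6)=\Big(-\tfrac18,\;\tfrac38,\;\tfrac{\sigma}{2}-\tfrac18,\;\tfrac{\sigma}{2}+\tfrac38,\;\tfrac{\tau}{4},\;\tfrac{\tau}{4}+\tfrac12\Big),\qquad \sum_j s_j=\tfrac{\tau}{2}+\sigma+1,
\]
so a single application of the elliptic beta integral (no symmetrization, no three-term identity) evaluates the integral. The remaining work is indeed bookkeeping: repeated use of the duplication formula for $\Gamma$, the identities $\Gamma(\tau/2+\tfrac12)=\tfrac{(1/2;\sigma)}{(1/2;\tau/2)}$ and $\Gamma(\sigma+\tfrac12;\tau,2\sigma)=\tfrac{1}{(\sigma+1/2;2\sigma)}$, and finally the evaluation $\theta_0(\tfrac14;\tau)=(1+i)(1/2;2\tau)$ that produces the factor $-(1+i)$. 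Your final paragraph anticipates this simplification correctly, but you will not reach it without the change of variables and the switch to the modular pair $(\tau/2,\sigma)$.
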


\begin{thm} \label{thm:eval2}
We have that 
\[
\int_\gamma \frac{\Gamma(t - 1/4; \tau, \sigma)}{\Gamma(t + 1/4; \tau, \sigma)} \frac{\theta_0(t + 1/2; \tau)}{\theta_0(t + 1/4; \tau)} \frac{\theta_0(t + 1/2; \sigma)}{\theta_0(t + 1/4; \sigma)} dt = - (1 - i) \frac{\Gamma(3/4; \tau, \sigma)}{\Gamma(1/4; \tau, \sigma)} \frac{1}{(\tau; \tau)(\tau + 1/2; 2\tau)} \frac{1}{(\sigma; \sigma) (\sigma + 1/2; 2\sigma)},
\]
where the cycle $\gamma$ travels from $-\frac{1}{2}$ to $\frac{1}{2}$, lies above the pole at $t = 1/4$ and below the pole at $t = -1/4$, and separates all other poles above and below the real axis.
\end{thm}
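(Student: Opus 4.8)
The integrand of Theorem~\ref{thm:eval2} is the reflection of that of Theorem~\ref{thm:eval1}. Since $\Gamma(z;\tau,\sigma)$ and $\theta_0(z;\tau)$ both have period $1$ in additive notation, replacing $\tfrac14$ by $\tfrac34$ everywhere in the integrand of Theorem~\ref{thm:eval1} produces precisely the integrand of Theorem~\ref{thm:eval2} --- using $\Gamma(t+\tfrac34;\tau,\sigma)=\Gamma(t-\tfrac14;\tau,\sigma)$, $\Gamma(t-\tfrac34;\tau,\sigma)=\Gamma(t+\tfrac14;\tau,\sigma)$, $\theta_0(t-\tfrac34;\tau)=\theta_0(t+\tfrac14;\tau)$, and so on --- while the prescribed cycle $\gamma$ (above $t=\tfrac14$, below $t=-\tfrac14$) is exactly the one attached to $\tfrac34$ under the identifications $\tfrac14\equiv-\tfrac34$, $-\tfrac14\equiv\tfrac34 \pmod 1$. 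I would therefore prove Theorems~\ref{thm:eval1} and \ref{thm:eval2} together, as the $a=\tfrac14$ and $a=\tfrac34$ cases of the single identity obtained by replacing $\tfrac14$ by a generic parameter $a$: the $t$-independent prefactor in the evaluation then carries a factor equal to $-(1+i)$ at $a=\tfrac14$ and to $-(1-i)$ at $a=\tfrac34$, and $\Gamma(1/4;\tau,\sigma)/\Gamma(3/4;\tau,\sigma)$ is replaced by $\Gamma(a;\tau,\sigma)/\Gamma(1-a;\tau,\sigma)$. Equivalently one simply repeats the proof of Theorem~\ref{thm:eval1} verbatim with $\tfrac14$ and $\tfrac34$ interchanged.

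The method is to rearrange the integrand into an instance of the elliptic beta integral of~\cite{Spi00}. First, use the quasi-periodicity relations $\Gamma(z+\sigma;\tau,\sigma)=\theta_0(z;\tau)\,\Gamma(z;\tau,\sigma)$ and $\Gamma(z+\tau;\tau,\sigma)=\theta_0(z;\sigma)\,\Gamma(z;\tau,\sigma)$ to absorb both $\theta_0$-ratios into the ratio of elliptic gamma functions, turning the integrand into a pure ratio of four elliptic gamma functions over four. Then, using the reflection $\Gamma(z;\tau,\sigma)\Gamma(\tau+\sigma-z;\tau,\sigma)=1$ together with $\Gamma(z+\tau+\sigma;\tau,\sigma)=-e^{-2\pi i z}\theta_0(z;\tau)\theta_0(z;\sigma)\Gamma(z;\tau,\sigma)$, peel off a $t$-independent theta/exponential prefactor --- this is where the $-(1-i)$ of Theorem~\ref{thm:eval2} replaces the $-(1+i)$ of Theorem~\ref{thm:eval1}, the discrepancy being produced entirely by the exponentials $e^{2\pi i a}$ collected when commuting $\theta_0$'s past period-$1$ shifts --- and recognize what is left, after an innocuous symmetrization $t\mapsto -t$ (legitimate because the cycle and $dt$ are preserved up to orientation), as a specialization of the elliptic beta integral in which one of the six parameter pairs has degenerated. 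Finally, evaluate by the elliptic beta integral and simplify the resulting product $\prod_{1\le j<k\le 6}\Gamma(a_j+a_k;\tau,\sigma)$ back down using the same reflection and quasi-periodicity relations together with standard dissection identities for the $q$-Pochhammer symbols $(\tau;\tau)$, $(\tau+\tfrac12;2\tau)$, $(\sigma;\sigma)$, $(\sigma+\tfrac12;2\sigma)$; specializing $a=\tfrac34$ gives the stated closed form.

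The main obstacle is this middle step: identifying the precise degenerate choice of Spiridonov's six parameters and the exact theta prefactor, and --- since the cycle $\gamma$ in the statement (``above $t=-a$, below $t=a$'') is not the symmetric contour appearing in the elliptic beta integral --- verifying that deforming $\gamma$ to the admissible contour crosses no poles, or else computing the residues it does cross (the residues of the elliptic gamma function being explicit theta-products) and checking that they cancel, for instance against a second application of the elliptic beta integral or a theta-function identity. One must also bear in mind that $a=\tfrac14$ and $a=\tfrac34$ lie in opposite half-strips, so it is cleanest to phrase the contour prescription relative to the moving poles $t=\pm a$ rather than to fixed points; once this bookkeeping is carried out for general $a$, Theorem~\ref{thm:eval2} follows by setting $a=\tfrac34$.
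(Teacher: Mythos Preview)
Your opening observation is correct and matches the paper: the integrand of Theorem~\ref{thm:eval2} is literally that of Theorem~\ref{thm:eval1} with $\tfrac14$ replaced by $\tfrac34$ (equivalently, $t\mapsto -t$ together with period-$1$ shifts), and the paper indeed proves the two theorems by parallel computations.  So ``repeat the proof of Theorem~\ref{thm:eval1} verbatim with $\tfrac14$ and $\tfrac34$ interchanged'' is exactly right.

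However, your account of \emph{what that proof is} has two genuine gaps.  First, the method you sketch --- absorb the $\theta_0$-ratios into elliptic gammas with moduli $(\tau,\sigma)$, symmetrize in $t$, and spot a degenerate elliptic beta integral --- does not produce anything of the shape $\prod_i \Gamma(\pm t+s_i)/\Gamma(\pm 2t)$: there is no $\Gamma(\pm 2t)$ in sight.  The paper's key move, which your sketch omits entirely, is the substitution $t\mapsto 2t+\tfrac12$ followed by working with modular parameters $(\tau/2,\sigma)$ rather than $(\tau,\sigma)$.  After this substitution the integrand contains $\Gamma(\pm 2t+\tfrac14;\tau,\sigma)$, which the duplication formulas split into four factors $\Gamma(\pm t+s;\tau/2,\sigma)$, and the factor $\theta_0(2t;\tau)\theta_0(2t;\sigma)$ produces precisely the required $\Gamma(\pm 2t;\tau/2,\sigma)^{-1}$ together with two more $\Gamma(\pm t+\cdot)$ factors.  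Only then does one land on a genuine elliptic beta integrand with the six parameters
\[
(s_1,\dots,s_6)=(1/8,\,5/8,\,\sigma/2+1/8,\,\sigma/2+5/8,\,\tau/4,\,\tau/4+1/2).
\]

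Second, the ``generic $a$'' identity you propose does not exist by this route.  Tracking the balancing condition for Spiridonov's integral through the paper's reduction, the six parameters sum to $\tau/2+\sigma+3/2-2a$, so the constraint $\sum s_i\equiv \tau/2+\sigma\pmod 1$ forces $a\equiv\tfrac14\pmod{\tfrac12}$.  Thus $a=\tfrac14$ and $a=\tfrac34$ are the \emph{only} values for which the elliptic beta integral applies; there is no closed form $\Gamma(a)/\Gamma(1-a)\cdot(\dots)$ for general $a$ to specialize.  Your scalar remark is nonetheless right in spirit: the $-(1+i)$ versus $-(1-i)$ discrepancy arises from the quasi-periodicity exponentials (the factor $\pm i\,e^{-4\pi i t}$ in the paper's first displayed identity flips sign when $1/4$ is exchanged with $3/4$).
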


\subsection{Evaluations of the second kind}

The second type of evaluation we consider is related to a special value of the elliptic Macdonald polynomial.  For modular parameters $\tau, \eta$ with $\Imm(\tau), \Imm(\eta) > 0$, we consider the theta hypergeometric integral
\begin{equation} \label{eq:asym-int}
\wI(\lambda; \tau, \eta) := e^{-3\pi i \lambda} \int_\gamma \frac{\Gamma(t - 2\eta; \tau, 8\eta)}{\Gamma(t + 2 \eta; \tau, 8\eta)} \frac{\theta_0(t + \lambda; \tau)}{\theta_0(t + 2\eta; \tau)} \frac{\theta_0(t - 4\eta; 8\eta)}{\theta_0(t + 2\eta; 8\eta)} \theta_0(2t + 6\tau - 4\lambda + 1/2; 8\tau) dt,
\end{equation}
where the cycle $\gamma$ travels from $-\frac{1}{2}$ to $\frac{1}{2}$, lies above the pole at $t = -2 \eta$ and below the pole at $t = 2\eta$, and separates all other poles above and below the real axis.  Define the symmetrization
\[
I(\lambda; \tau, \eta) := \wI(\lambda; \tau, \eta) - \wI(-\lambda; \tau, \eta).
\]
We will give an explicit evaluation of $I(\lambda; \tau, \eta)$.

\begin{thm} \label{thm:eval3}
We have the expression
\[
I(\lambda; \tau, \eta) = e^{-12\pi i \eta} \frac{\Gamma(6\eta; \tau, 8\eta)}{\Gamma(2\eta; \tau, 8\eta)}\frac{1}{(8\tau; 8\tau)\theta_0(- 4\eta; \tau)}  \frac{1}{(4\eta; 4\eta) (2\eta + 1/2; 2\eta)}  e^{-3\pi i \lambda} \theta_0(\lambda; \tau) \theta_0(\lambda - 2\eta; \tau) \theta_0(\lambda + 2\eta; \tau).
\]
\end{thm}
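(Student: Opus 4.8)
The plan is to evaluate $I(\lambda;\tau,\eta)$ by reducing it, through theta‑function manipulations, to one or more instances of Spiridonov's elliptic beta integral \cite{Spi00} with modular parameters $(\tau,8\eta)$; the quasi‑periodicity of $I$ in $\lambda$ serves to organize the computation and to pin down the shape of the answer. First I would record the elementary quasi‑periodicities of the $\lambda$‑dependent factors of the integrand of $\wI(\lambda;\tau,\eta)$, namely $\theta_0(t+\lambda;\tau)$ and $\theta_0(2t+6\tau-4\lambda+1/2;8\tau)$, together with the outer prefactor $e^{-3\pi i\lambda}$ and the identity $\theta_0(z+\sigma;\sigma)=-e^{-2\pi iz}\theta_0(z;\sigma)$. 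A short computation shows that the multiplier picked up by the integrand under $\lambda\mapsto\lambda+2\tau$ is independent of $t$, giving $\wI(\lambda+2\tau;\tau,\eta)=e^{-12\pi i\tau-12\pi i\lambda}\,\wI(\lambda;\tau,\eta)$ and $\wI(\lambda+1;\tau,\eta)=-\wI(\lambda;\tau,\eta)$, hence the same laws for $I$. Since $I(\lambda;\tau,\eta)$ is holomorphic in $\lambda$ (a contour integral of an integrand entire in $\lambda$) and manifestly odd, these transformation laws match those of $e^{-3\pi i\lambda}\theta_0(\lambda;\tau)\theta_0(\lambda-2\eta;\tau)\theta_0(\lambda+2\eta;\tau)$, and already force $I(\lambda;\tau,\eta)$ to vanish at $\lambda=0$ and (using oddness together with the $2\tau$‑quasi‑periodicity) at $\lambda=\tau$; it will then remain only to compute the integral at enough further values of $\lambda$ to identify $I$ with the claimed multiple of that theta product.

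For the evaluations themselves I would pass between theta functions and elliptic gamma functions using the shift relations $\Gamma(z+\tau;\tau,\sigma)=\theta_0(z;\sigma)\Gamma(z;\tau,\sigma)$ and $\Gamma(z+\sigma;\tau,\sigma)=\theta_0(z;\tau)\Gamma(z;\tau,\sigma)$, the reflection $\Gamma(z;\tau,\sigma)\Gamma(\tau+\sigma-z;\tau,\sigma)=1$, and their consequence $\Gamma(\pm 2t;\tau,8\eta)^{-1}=\theta_0(2t+\tau;\tau)\theta_0(2t;8\eta)$. Rewriting each factor $\theta_0(\,\cdot\,;\tau)$ and $\theta_0(\,\cdot\,;8\eta)$ of the integrand of $\wI(\lambda;\tau,\eta)$ as a ratio of elliptic gamma functions with periods $(\tau,8\eta)$ turns it into a product of $\Gamma(\,\cdot\,;\tau,8\eta)$'s times the exponential prefactor and the single surviving factor $\theta_0(2t+6\tau-4\lambda+1/2;8\tau)$, whose modular parameter is incommensurable with $(\tau,8\eta)$. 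Regarded as a function of $t$ this factor lies in a two‑dimensional space of theta functions, and I would expand it in a basis adapted to the substitution $t\mapsto -t$; the basis elements have no definite parity in $t$, which is exactly why the symmetrized combination $I(\lambda;\tau,\eta)=\wI(\lambda;\tau,\eta)-\wI(-\lambda;\tau,\eta)$ is the natural object. After substituting $t\mapsto -t$ in the terms coming from $\wI(-\lambda;\tau,\eta)$ and deforming the cycle back to $\gamma$ --- picking up the residues at the poles $t=\pm 2\eta$ crossed in the process --- the pieces recombine into a sum of integrals over a $t\mapsto -t$‑symmetric cycle whose integrands are of the elliptic beta form $\prod_{j=1}^{6}\Gamma(a_j\pm t;\tau,8\eta)/\Gamma(\pm 2t;\tau,8\eta)$, for explicit six‑tuples $a_1,\dots,a_6$ that are affine‑linear in $\lambda,\eta,\tau$ and satisfy the balancing condition $\sum_j a_j=\tau+8\eta$.

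Applying Spiridonov's elliptic beta integral to each piece replaces it by $\prod_{1\le j<k\le 6}\Gamma(a_j+a_k;\tau,8\eta)$, times the constants and prefactors accumulated above. I would then simplify this product of fifteen gamma functions: most arguments $a_j+a_k$ equal $\tau+8\eta$ up to an integer or a period, so the corresponding factor is $1$ by reflection; the rest, paired via reflection, combine through the shift relations into theta functions, leaving the residual ratio $\Gamma(6\eta;\tau,8\eta)/\Gamma(2\eta;\tau,8\eta)$, the triple product $\theta_0(\lambda;\tau)\theta_0(\lambda-2\eta;\tau)\theta_0(\lambda+2\eta;\tau)$, and the $q$‑Pochhammer constants $(8\tau;8\tau)^{-1}$, $\theta_0(-4\eta;\tau)^{-1}$, $(4\eta;4\eta)^{-1}$, $(2\eta+1/2;2\eta)^{-1}$; the two pieces recombine by a theta addition formula, and collecting all exponential prefactors into $e^{-12\pi i\eta-3\pi i\lambda}$ produces the stated identity. (Running the specialization $\lambda=2\eta$, where $\theta_0(t+\lambda;\tau)/\theta_0(t+2\eta;\tau)$ collapses to $1$, gives an independent check that $\wI(2\eta;\tau,\eta)=\wI(-2\eta;\tau,\eta)$, consistent with the vanishing of the right‑hand side at $\lambda=\pm2\eta$.)

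The main obstacle lies in the middle step: one must choose the basis decomposition of the $\theta_0(\,\cdot\,;8\tau)$ factor and order the shift, reflection, and isogeny moves so that, after symmetrization, the theta‑to‑gamma conversion produces \emph{exactly} six elliptic gamma factors forming a \emph{balanced} parameter six‑tuple --- there is no a priori reason this should happen, so arranging it is what simultaneously discovers and proves the identity --- while keeping exact control of the many exponential and $q$‑Pochhammer prefactors generated by the quasi‑periodicity and isogeny relations, together with the residue contributions from the $t\mapsto -t$ step, so as to land on the precise constant on the right‑hand side rather than merely the correct dependence on $\lambda$.
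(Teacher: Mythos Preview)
Your overall strategy---exploit the quasi-periodicity and oddness in $\lambda$ to pin down the theta-product shape, then reduce to Spiridonov's beta integral---matches the paper's. But two concrete points separate your proposal from a proof.

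First, the cycle $\gamma$ is already invariant under $t\mapsto -t$ (it lies above $-2\eta$, below $2\eta$, and separates the remaining poles symmetrically), so no residues are picked up in that step. The paper uses this symmetry directly: after writing $\wI(\lambda)=e^{-12\pi i\eta}\int_\gamma \wJ_1(t)\wJ_2(t,\lambda)\,dt$ with $\wJ_1(t)=\wJ_1(-t)$, the change $t\mapsto -t$ gives $\int_\gamma \wJ_1\wJ_2(-t,\lambda)\,dt=\int_\gamma \wJ_1\wJ_2(t,\lambda)\,dt$ for free, and $I(\lambda)$ becomes $\tfrac{1}{2}e^{-12\pi i\eta}\int_\gamma \wJ_1\cdot\bigl(\wJ_2(t,\lambda)-\wJ_2(t,-\lambda)+\wJ_2(-t,\lambda)-\wJ_2(-t,-\lambda)\bigr)\,dt$. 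There is no residue bookkeeping.

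Second, and more seriously, the elliptic beta integral does \emph{not} apply with modular parameters $(\tau,8\eta)$. The obstruction is the factor $\theta_0(2t+6\tau-4\lambda+1/2;8\tau)$, whose modular parameter $8\tau$ cannot be absorbed into a balanced six-tuple of $\Gamma(\,\cdot\,;\tau,8\eta)$'s. The paper's key technical step is to first reduce this $8\tau$-theta: the four-fold symmetrization of $\wJ_2$ is shown, via two nested theta identities, to be a linear combination (with $\lambda$-dependent coefficients) of
\[
e^{-2\pi i t}\theta_0(t+\tau+1/2;2\tau)\theta_0(t+1/2;2\tau)^2
\quad\text{and}\quad
e^{-2\pi i t}\theta_0(t+\tau+1/2;2\tau)\theta_0(t;2\tau)^2,
\]
so that the $\lambda$-dependence is pulled entirely outside the integral. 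The two remaining integrals, now involving only $2\tau$- and $8\eta$-thetas, are then put into beta-integral form with modular parameters $(2\tau,8\eta)$, not $(\tau,8\eta)$; the isogeny $\Gamma(z;\tau,8\eta)=\Gamma(z;2\tau,8\eta)\Gamma(z+\tau;2\tau,8\eta)$ is what makes the six-tuples balance. The paper flags this choice of modular parameters as the main non-obvious move. Finally, rather than tracking the constant through the gamma-product simplification directly, the paper shows the resulting $\lambda$-combination vanishes at $\lambda=2\eta$ (hence at $\pm 2\eta$ by the multiplier law), identifies it as $C(\tau,\eta)\theta_0(\lambda-2\eta;\tau)\theta_0(\lambda+2\eta;\tau)$, and reads off $C(\tau,\eta)$ by evaluating at $\lambda=1/2$.
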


\subsection{Proof of evaluations of the first kind}

In this subsection we prove Theorems \ref{thm:eval1} and \ref{thm:eval2}. We apply the elliptic beta integral introduced by Spiridonov in \cite{Spi00}, formulated in our notations as follows.

\begin{thm}[{\cite[Theorem 1]{Spi00}}] \label{thm:ell-beta}
Let $\tau, \sigma$ be modular parameters with $\Imm(\tau), \Imm(\sigma) > 0$, and let $s_1, \ldots, s_6$ be parameters with $\Imm(s_i) > 0$ so that $\sum_{i = 1}^6 s_i = \tau + \sigma$.  Then we have
\[
\int_{\gamma} \frac{\prod_{i = 1}^6 \Gamma(\pm t + s_i; \tau, \sigma)}{\Gamma(\pm 2t; \tau, \sigma)} dt = \frac{2 \prod_{i < j} \Gamma(s_i + s_j; \tau, \sigma)}{(\tau; \tau) (\sigma; \sigma)},
\]
where the contour $\gamma$ is a line from $-1/2$ to $1/2$.
\end{thm}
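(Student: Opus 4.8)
The plan is to prove the elliptic beta integral by the classical strategy of showing that a suitably normalized version of the integral satisfies a lattice of balanced difference equations, deducing that it is constant by an elliptic Liouville argument, and then pinning down the constant by a residue computation. Write $\rho_{\underline s}(t)$ for the integrand $\prod_{i=1}^6\Gamma(\pm t + s_i;\tau,\sigma)/\Gamma(\pm 2t;\tau,\sigma)$, let $I(\underline s)$ denote the left-hand side, and set $J(\underline s):=I(\underline s)/\prod_{i<j}\Gamma(s_i+s_j;\tau,\sigma)$. First I would record the basic functional equations of the elliptic gamma function, which follow directly from the product definition of $\Gamma$: namely $\Gamma(z+\tau;\tau,\sigma)=\theta_0(z;\sigma)\,\Gamma(z;\tau,\sigma)$, the analogous relation with $\tau$ and $\sigma$ exchanged, the reflection $\Gamma(z;\tau,\sigma)\Gamma(\tau+\sigma-z;\tau,\sigma)=1$, and the location and residues of the poles of $\Gamma$, which sit at $z\in-\tau\ZZ_{\geq0}-\sigma\ZZ_{\geq0}$ modulo $\ZZ$. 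Since every factor of $\rho_{\underline s}$ is built from $e^{2\pi i t}$, the integrand is $1$-periodic in $t$, so the contour $\gamma$ closes up to a loop on the cylinder $\CC/\ZZ$; this is what makes contour shifts by $\sigma$ (or $\tau$) meaningful.

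The heart of the argument is a \emph{contiguous relation}. I would look for an explicit auxiliary function $g_{\underline s}(t)$, again $1$-periodic in $t$ and built from the same gamma and theta factors, for which the functional equations reduce the combination of integrands under the balanced shift $s_1\mapsto s_1+\tau$, $s_2\mapsto s_2-\tau$ to an exact $\sigma$-difference,
\[
\text{(theta prefactor)}\cdot\rho_{s_1+\tau,\,s_2-\tau,\,s_3,\dots,s_6}(t)-\rho_{\underline s}(t)=g_{\underline s}(t+\sigma)-g_{\underline s}(t).
\]
The construction of $g_{\underline s}$ rests on the Riemann addition identity for $\theta_0(\,\cdot\,;\sigma)$, which converts the ratio of $\theta_0$-factors produced by the functional equations into a telescoping difference. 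Integrating over $\gamma$ and using that $\int_\gamma[g_{\underline s}(t+\sigma)-g_{\underline s}(t)]\,dt=2\pi i\sum\Res$, where the residues are taken over the strip between $\gamma$ and $\gamma-\sigma$, these residues are arranged to cancel, so the right-hand side vanishes and one obtains that $J$ is invariant under the balanced shift. By the manifest $S_6$-symmetry of $I$ in $s_1,\dots,s_6$ and the $\tau\leftrightarrow\sigma$ symmetry, this invariance extends to the full lattice of balanced shifts and their $\sigma$-analogues.

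Finally I would upgrade this to constancy and compute the normalization. Because $J$ depends on $s_1$ (with $s_2$ fixed by the balancing condition $\sum_i s_i=\tau+\sigma$) only through $e^{2\pi i s_1}$ and is invariant under $s_1\mapsto s_1+\tau$, it descends to a function on the torus $\CC/(\ZZ+\tau\ZZ)$; the denominator $\prod_{i<j}\Gamma(s_i+s_j;\tau,\sigma)$ is chosen precisely to cancel the poles of $I$ coming from the pinching of $\gamma$ between colliding poles of the integrand, so $J$ is holomorphic on this torus and hence constant by Liouville's theorem, and by symmetry it is a global constant. To evaluate the constant I would take the limit $s_1+s_2\to0$: on one side $\Gamma(s_1+s_2;\tau,\sigma)$ develops a simple pole, while on the other side the poles of $\Gamma(t+s_1;\tau,\sigma)$ and $\Gamma(s_2-t;\tau,\sigma)$ collide and pinch the contour, so $I$ is governed by the residue at the pinch. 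Using the balancing condition, which degenerates to $\sum_{i\geq3}s_i=\tau+\sigma$, together with the reflection equation, this residue reduces the six-parameter integral to an explicit product, and matching the leading singular behavior on the two sides yields the constant $2/((\tau;\tau)(\sigma;\sigma))$. The main obstacle is the middle step: finding the telescoping function $g_{\underline s}$ and verifying that the residues in the strip between $\gamma$ and $\gamma-\sigma$ genuinely cancel requires a careful application of the theta addition formula and precise bookkeeping of the pole structure of $\Gamma$, and getting the balanced-shift contiguous relation exactly right is the crux of the whole proof.
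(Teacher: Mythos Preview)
The paper does not prove this theorem at all: it is quoted verbatim as \cite[Theorem 1]{Spi00} and used as a black box in the proofs of Theorems \ref{thm:eval1}, \ref{thm:eval2} and Lemmas \ref{lem:int-eval1}, \ref{lem:int-eval2}. So there is nothing to compare your proposal against in this paper.

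That said, your outline is a reasonable sketch of one of the standard modern proofs of Spiridonov's elliptic beta integral (close in spirit to the difference-equation/Liouville argument given by Spiridonov in later work and by Rains), rather than Spiridonov's original proof in \cite{Spi00}, which proceeded differently, via a limit from the Frenkel--Turaev summation. If you want to present a self-contained proof, the main point that is genuinely nontrivial and that you flag yourself is the construction of the telescoping function $g_{\underline s}$ and the verification that the residues between $\gamma$ and $\gamma-\sigma$ cancel; in your write-up this is still a promise rather than an argument. You would also need to be more careful about the claim that the normalized quantity $J$ is holomorphic on the $s_1$-torus: this requires checking that \emph{all} singularities of $I(\underline s)$, not just those from a single pair pinching, are accounted for by the product $\prod_{i<j}\Gamma(s_i+s_j;\tau,\sigma)$, which takes some work with the pole structure of $\Gamma$. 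For the purposes of this paper, however, none of this is needed; the theorem is simply invoked from the literature.
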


We proceed by simplifying the integrands into the form of the elliptic beta integral for modular parameters $\tau/2$ and $\sigma$.   

\begin{proof}[Proof of Theorem \ref{thm:eval1}]
Denote the original integrand by $J(t)$ and define
\[
I(t) := \frac{\Gamma(2t - 1/4; \tau, \sigma)}{\Gamma(2t + 1/4; \tau, \sigma)} \frac{\theta_0(2t; \tau)}{\theta_0(2t + 1/4; \tau)} \frac{\theta_0(2t; \sigma)}{\theta_0(2t + 1/4; \sigma)}
\]
so that $I(t) = J(2t + 1/2)$.  We note that $\int_\gamma J(t) dt = \int_\gamma I(t) dt$ since $I(t)$ is $1$-periodic in $t$.  Denoting $\Gamma(z) := \Gamma(z; \tau/2, \sigma)$, observe now that 
\begin{multline*}
\frac{\Gamma(2t - 1/4; \tau, \sigma)}{\Gamma(2t + 1/4; \tau, \sigma)} \frac{1}{\theta_0(2t + 1/4; \tau)} \frac{1}{\theta_0(2t + 1/4; \sigma)} = \Gamma(\pm 2t - 1/4; \tau, \sigma) \frac{\theta_0(\tau + 2t + 1/4; \tau)}{\theta_0(2t + 1/4; \tau)} \\ = i e^{-4\pi i t}\Gamma(\pm 2t - 1/4; \tau, \sigma)
 = i e^{-4\pi i t} \Gamma(\pm t - 1/8) \Gamma(\pm t + 3/8) \Gamma(\pm t + \sigma/2 - 1/8)\Gamma(\pm t + \sigma/2 + 3/8)
\end{multline*}
and that 
\begin{multline*}
\theta_0(2t; \tau) \theta_0(2t; \sigma) = \frac{\theta_0(2t; \tau)}{\Gamma(2t) \Gamma(\sigma - 2t)} = \frac{1}{\Gamma(\pm 2t)} \frac{\theta_0(2t; \tau)}{\theta_0(\tau/2 + 2t; \tau/2)} \\= - e^{4\pi i t} \frac{1}{\Gamma(\pm 2t)} \frac{1}{\theta_0(2t + \tau/2; \tau)} = - e^{4\pi i t} \frac{\Gamma(\pm t + \tau/4)\Gamma(\pm t + \tau/4 + 1/2)}{\Gamma(\pm 2t)}.
\end{multline*}
Substituting these in, we find that 
\[
I(t) = - i \frac{\Gamma(\pm t - 1/8) \Gamma(\pm t + 3/8) \Gamma(\pm t + \sigma/2 - 1/8) \Gamma(\pm t + \sigma/2 + 3/8) \Gamma(\pm t + \tau/4) \Gamma(\pm t + \tau/4 + 1/2)}{\Gamma(\pm 2t)}.
\]
Therefore, $I(t)$ corresponds to the integrand of the elliptic beta integral with 
\[
(s_1, \ldots, s_6) = (-1/8, 3/8, \sigma/2 - 1/8, \sigma/2 + 3/8, \tau/4, \tau/4 + 1/2)
\]
and modular parameters $\tau/2$ and $\sigma$.  By Theorem \ref{thm:ell-beta}, we conclude that 
\begin{align*}
\int_\gamma I(t) dt &= - \frac{2i}{(\tau/2; \tau/2)(\sigma; \sigma)} \Gamma(1/4)\Gamma(\sigma/2 - 1/4) \Gamma(\sigma/2 + 1/4) \Gamma(\tau/4 - 1/8)\Gamma(\tau/4 + 3/8)\\
&\phantom{==} \Gamma(\sigma/2 + 1/4) \Gamma(\sigma/2 + 3/4) \Gamma(\tau/4 + 3/8) \Gamma(\tau/4 + 7/8) \Gamma(\sigma + 1/4)\Gamma(\sigma/2 + \tau/4 - 1/8)\\
&\phantom{==} \Gamma(\sigma/2 + \tau/4 + 3/8) \Gamma(\sigma/2 + \tau/4 + 3/8)\Gamma(\sigma/2 + \tau/4 + 7/8) \Gamma(\tau/2 + 1/2)\\
&= - \frac{2i}{(\tau/2; \tau/2)(\sigma; \sigma)} \Gamma(\sigma + 1/2; \tau, 2\sigma)^2 \Gamma(\tau/2 - 1/4; \tau, 2\sigma)^2\Gamma(\sigma + \tau/2 - 1/4; \tau, 2\sigma)^2\\
&\phantom{==} \Gamma(1/4)\Gamma(\sigma + 1/4) \Gamma(\tau/2 + 1/2)\\
&= - \frac{2i}{(\tau/2; \tau/2)(\sigma; \sigma)} \Gamma(\sigma + 1/2; \tau, 2\sigma)^2 \Gamma(\tau/2 - 1/4; \tau, \sigma)^2 \Gamma(1/4)\Gamma(\sigma + 1/4) \Gamma(\tau/2 + 1/2),
\end{align*}
where we apply the duplication formulas for the elliptic gamma function of \cite{FV:mult}.  Notice now the identities
\begin{align*}
\Gamma(\tau/2 + 1/2) &= \frac{(\sigma + 1/2; \tau/2, \sigma)}{(\tau/2 + 1/2; \tau/2, \sigma)} = \frac{(1/2; \sigma)}{(1/2; \tau/2)}\\
\Gamma(\sigma + 1/2; \tau, 2\sigma) &= \frac{(\tau + \sigma + 1/2; \tau, 2\sigma)}{(\sigma + 1/2; \tau, 2\sigma)} = \frac{1}{(\sigma + 1/2; 2\sigma)}\\
\Gamma(\tau/2 - 1/4; \tau, \sigma)^2 \Gamma(1/4)\Gamma(\sigma + 1/4) &= \frac{(\tau/2 + \sigma + 1/4; \tau, \sigma)^2}{(\tau/2 - 1/4; \tau, \sigma)^2} \frac{(\tau/2 - 1/4; \tau/2, \sigma)(\tau/2 + \sigma - 1/4; \tau/2, \sigma)}{(1/4; \tau/2, \sigma)(\sigma + 1/4; \tau/2, \sigma)}\\
&= \frac{(\tau - 1/4; \tau, \sigma)^2}{(\sigma + 1/4; \tau, \sigma)^2} \frac{1}{\theta_0(1/4; \tau/2)}.
\end{align*}
Substituting these into the previous expression yields
\begin{align*}
\int_\gamma I(t) dt &= - 2i \Gamma(\sigma + 1/4; \tau, \sigma)^2 \frac{1}{(\tau/2; \tau/2)(1/2; \tau/2) \theta_0(1/4; \tau/2)} \frac{(1/2; \sigma)}{(\sigma; \sigma)(\sigma + 1/2; 2\sigma)^2}\\
&= -i(1+i) \frac{\Gamma(1/4; \tau, \sigma)}{\Gamma(3/4; \tau, \sigma)} \frac{\theta_0(1/4; \tau)}{(\tau; \tau) (1/2; \tau)} \frac{(1/2; \sigma)}{(\sigma; \sigma) (\sigma+1/2; 2\sigma)^2 \theta_0(3/4; \sigma)}.
\end{align*}
Observe now that
\[
\frac{\theta_0(1/4; \tau)}{(\tau; \tau) (1/2; \tau)} = \frac{1}{1 + i} \frac{(1/4; \tau)(-1/4; \tau)}{(\tau; \tau)(1/2;\tau)}= \frac{1}{1 + i} \frac{(1/2; 2\tau)}{(\tau; \tau)(1/2;\tau)} = \frac{1}{1 + i} \frac{1}{(\tau; \tau)(\tau+1/2; 2\tau)}
\]
and
\begin{multline*}
\frac{(1/2; \sigma)}{(\sigma; \sigma) (\sigma+1/2; 2\sigma)^2 \theta_0(3/4; \sigma)} = (1 - i) \frac{(1/2; \sigma)}{(\sigma; \sigma)(\sigma + 1/2; 2\sigma)^2 (3/4; \sigma)(1/4; \sigma)}\\ = (1 - i) \frac{(1/2; 2\sigma)}{(\sigma; \sigma)(\sigma + 1/2; 2\sigma) (1/2; 2\sigma)} = (1 - i) \frac{1}{(\sigma; \sigma)(\sigma + 1/2; 2\sigma)}.
\end{multline*}
Substituting in, we conclude as desired that
\[
\int_\gamma I(t) dt = -(1 + i) \frac{\Gamma(1/4; \tau, \sigma)}{\Gamma(3/4; \tau, \sigma)} \frac{1}{(\tau; \tau)(\tau+1/2; 2\tau)} \frac{1}{(\sigma; \sigma)(\sigma+1/2; 2\sigma)}. \qedhere
\]
\end{proof}

\begin{proof}[Proof of Theorem \ref{thm:eval2}]
As in the proof of Theorem \ref{thm:eval1}, denote the original integrand by $J(t)$ and define
\[
I(t) := \frac{\Gamma(2t + 1/4; \tau, \sigma)}{\Gamma(2t - 1/4; \tau, \sigma)} \frac{\theta_0(2t; \tau)}{\theta_0(2t - 1/4; \tau)} \frac{\theta_0(2t; \sigma)}{\theta_0(2t - 1/4; \sigma)}
\]
so that $I(t) = J(2t + 1/2)$ and hence that $\int_\gamma J(t) dt = \int_\gamma I(t) dt$ since $I(t)$ is $1$-periodic in $t$.  Denoting $\Gamma(z) := \Gamma(z; \tau/2, \sigma)$, observe now that 
\begin{multline*}
\frac{\Gamma(2t + 1/4; \tau, \sigma)}{\Gamma(2t - 1/4; \tau, \sigma)} \frac{1}{\theta_0(2t - 1/4; \tau)} \frac{1}{\theta_0(2t - 1/4; \sigma)} = \Gamma(\pm 2t + 1/4; \tau, \sigma) \frac{\theta_0(\tau + 2t - 1/4; \tau)}{\theta_0(2t - 1/4; \tau)} \\ = -i e^{-4\pi i t}\Gamma(\pm 2t + 1/4; \tau, \sigma)
 = -i e^{-4\pi i t} \Gamma(\pm t + 1/8) \Gamma(\pm t + 5/8) \Gamma(\pm t + \sigma/2 + 1/8)\Gamma(\pm t + \sigma/2 + 5/8)
\end{multline*}
and that 
\begin{multline*}
\theta_0(2t; \tau) \theta_0(2t; \sigma) = \frac{\theta_0(2t; \tau)}{\Gamma(2t) \Gamma(\sigma - 2t)} = \frac{1}{\Gamma(\pm 2t)} \frac{\theta_0(2t; \tau)}{\theta_0(\tau/2 + 2t; \tau/2)}\\ =- e^{4\pi i t} \frac{1}{\Gamma(\pm 2t)} \frac{1}{\theta_0(2t + \tau/2; \tau)} = - e^{4\pi i t} \frac{\Gamma(\pm t + \tau/4)\Gamma(\pm t + \tau/4 + 1/2)}{\Gamma(\pm 2t)}.
\end{multline*}
We conclude that 
\[
I(t) = i \frac{\Gamma(\pm t + 1/8)\Gamma(\pm t + 5/8)\Gamma(\pm t + \sigma/2 + 1/8)\Gamma(\pm t + \sigma/2 + 5/8)\Gamma(\pm t + \tau/4) \Gamma(\pm t + \tau/4 + 1/2)}{\Gamma(\pm 2t)},
\]
which is the integrand of the elliptic beta integral with modular parameters $\tau/2$ and $\sigma$ and 
\[
(s_1, \dots, s_6) = (1/8, 5/8, \sigma/2 +1/8, \sigma/2 + 5/8, \tau/4, \tau/4 + 1/2).
\]
By Theorem \ref{thm:ell-beta}, we conclude that 
\begin{align*}
\int_\gamma I(t) dt &= \frac{2i}{(\tau/2; \tau/2)(\sigma; \sigma)} \Gamma(3/4)\Gamma(\sigma/2+1/4)\Gamma(\sigma/2+3/4) \Gamma(\tau/4+1/8)\Gamma(\tau/4+5/8) \\
&\phantom{==} \Gamma(\sigma/2+3/4) \Gamma(\sigma/2 + 1/4) \Gamma(\tau/4 +5/8) \Gamma(\tau/4 + 1/8) \Gamma(\sigma + 3/4) \Gamma(\sigma/2+\tau/4+1/8)\\ 
&\phantom{==} \Gamma(\sigma/2+\tau/4+5/8)\Gamma(\sigma/2+\tau/4+5/8)\Gamma(\sigma/2+\tau/4+1/8)\Gamma(\tau/2 +1/2)\\
&= \frac{2i}{(\tau/2; \tau/2)(\sigma; \sigma)} \Gamma(\sigma+1/2; \tau, 2 \sigma)^2 \Gamma(\tau/2+1/4; \tau, 2\sigma)^2 \Gamma(\sigma + \tau/2 + 1/4; \tau, 2\sigma)^2 \\
&\phantom{==} \Gamma(3/4) \Gamma(\sigma + 3/4) \Gamma(\tau/2 +1/2)\\
&= \frac{2i}{(\tau/2; \tau/2)(\sigma; \sigma)} \Gamma(\sigma+1/2; \tau, 2 \sigma)^2  \Gamma(\tau/2+1/4; \tau, \sigma)^2 \Gamma(3/4) \Gamma(\sigma + 3/4) \Gamma(\tau/2 +1/2),
\end{align*}
where we apply the duplication formulas for the elliptic gamma function of \cite{FV:mult}.  We observe now the identities
\begin{align*}
\Gamma(\tau/2 + 1/2) &= \frac{(\sigma + 1/2; \tau/2, \sigma)}{(\tau/2 + 1/2; \tau/2, \sigma)} = \frac{(1/2; \sigma)}{(1/2; \tau/2)}\\
\Gamma(\sigma + 1/2; \tau, 2\sigma) &= \frac{(\tau + \sigma + 1/2; \tau, 2\sigma)}{(\sigma + 1/2; \tau, 2\sigma)} = \frac{1}{(\sigma + 1/2; 2\sigma)}\\
\Gamma(\tau/2+1/4; \tau, \sigma)^2 \Gamma(3/4) \Gamma(\sigma + 3/4) &= \frac{(\sigma + \tau/2 - 1/4; \tau, \sigma)^2}{(\tau/2 + 1/4; \tau, \sigma)^2} \frac{(\tau/2 + \sigma + 1/4; \tau/2, \sigma) (\tau/2 + 1/4; \tau/2, \sigma)}{(3/4; \tau/2, \sigma)(\sigma + 3/4; \tau/2, \sigma)}\\
&= \frac{(\tau + 1/4; \tau, \sigma)^2}{(\sigma - 1/4; \tau, \sigma)^2} \frac{1}{(\tau/2 + 1/4; \tau/2) (3/4; \tau/2)}.
\end{align*}
Substituting this into the previous result yields
\begin{align*}
\int_\gamma I(t) dt &= 2i \frac{\Gamma(\sigma + 3/4; \tau, \sigma)}{\Gamma(\tau + 1/4; \tau, \sigma)} \frac{1}{(1/2; \tau/2) (\tau/2;\tau/2) \theta_0(3/4; \tau/2)} \frac{(1/2; \sigma)}{(\sigma; \sigma) (\sigma + 1/2; 2\sigma)^2}\\
&= i(1 - i) \frac{\Gamma(3/4; \tau, \sigma)}{\Gamma(1/4; \tau, \sigma)} \frac{\theta_0(3/4; \tau)}{(\tau; \tau)(1/2; \tau)} \frac{(1/2; 2\sigma)}{(\sigma; \sigma) (\sigma + 1/2; 2\sigma) \theta_0(1/4; \sigma)}\\
&= - (1 - i) \frac{\Gamma(3/4; \tau, \sigma)}{\Gamma(1/4; \tau, \sigma)} \frac{1}{(\tau; \tau)(\tau + 1/2; 2\tau)} \frac{1}{(\sigma; \sigma) (\sigma + 1/2; 2\sigma)}. \qedhere
\end{align*}
\end{proof}

\subsection{Proof of the evaluation of the second kind}

The remainder of this section is devoted to the proof of Theorem \ref{thm:eval3}.  Our strategy will be to show that the value of the integral does not change after symmetrization of the integrand and then to evaluate this simpler symmetrized integrand.  We defer some computations with theta functions to Subsection \ref{sec:theta-comp} and some integral evaluations resulting from the elliptic beta integral to Subsection \ref{sec:int-eval-sec}.  Define the intermediate expressions
\[
\wJ_1(t, \tau, \eta) := \Gamma(t - 2 \eta; \tau, 8\eta) \Gamma(-t - 2\eta; \tau, 8 \eta)\theta_0(t + 4 \eta; 8\eta)
\]
and
\[
\wJ_2(t, \lambda, \tau) := e^{-3\pi i \lambda} \theta_0(t + \lambda; \tau) \theta_0(2t + 6 \tau - 4 \lambda + 1/2; 8\tau).
\]
Notice that $\wJ_1(t, \tau, \eta) = \wJ_1(-t, \tau, \eta)$.  We use this decomposition to show in Lemmas \ref{lem:int-rearrange}, \ref{lem:theta-simp}, and \ref{lem:full-sym} that a symmetrization of the integrand admits a simpler expression.  First, we show that $\wJ_1$ and $\wJ_2$ give a factorization of the integrand into symmetric and non-symmetric parts.

\begin{lemma} \label{lem:int-rearrange}
We have that 
\[
\wI(\lambda, \tau, \eta) = e^{-12\pi i \eta} \int_\gamma \wJ_1(t, \tau, \eta) \wJ_2(t, \lambda, \tau) dt.
\]
\end{lemma}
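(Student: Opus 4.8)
The plan is to prove the identity at the level of integrands. Since $e^{-12\pi i\eta}$ is independent of $t$ and the cycle $\gamma$ (from $-1/2$ to $1/2$, passing above $t=-2\eta$ and below $t=2\eta$, separating the remaining poles) is described literally identically in the definition of $\wI$ and in the right-hand side, it suffices to show that the integrand of \eqref{eq:asym-int} — with the overall $e^{-3\pi i\lambda}$ absorbed into the integral — equals $e^{-12\pi i\eta}\,\wJ_1(t,\tau,\eta)\,\wJ_2(t,\lambda,\tau)$ as a meromorphic function of $t$. Writing out $\wJ_1\wJ_2$ and comparing, the factors $e^{-3\pi i\lambda}$, $\theta_0(t+\lambda;\tau)$, $\theta_0(2t+6\tau-4\lambda+1/2;8\tau)$ and $\Gamma(t-2\eta;\tau,8\eta)$ occur on both sides and cancel, so the lemma reduces to the pointwise identity
\[
\frac{1}{\Gamma(t+2\eta;\tau,8\eta)}\cdot\frac{1}{\theta_0(t+2\eta;\tau)}\cdot\frac{\theta_0(t-4\eta;8\eta)}{\theta_0(t+2\eta;8\eta)}=e^{-12\pi i\eta}\,\Gamma(-t-2\eta;\tau,8\eta)\,\theta_0(t+4\eta;8\eta).
\]

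Next I would convert $1/\Gamma(t+2\eta;\tau,8\eta)$ into a factor of $\Gamma(-t-2\eta;\tau,8\eta)$. First the inversion relation $\Gamma(z;\tau,\sigma)\Gamma(\tau+\sigma-z;\tau,\sigma)=1$ gives $1/\Gamma(t+2\eta;\tau,8\eta)=\Gamma(\tau+6\eta-t;\tau,8\eta)$; then the two elementary shift relations $\Gamma(z+\tau;\tau,\sigma)=\theta_0(z;\sigma)\Gamma(z;\tau,\sigma)$ and $\Gamma(z+\sigma;\tau,\sigma)=\theta_0(z;\tau)\Gamma(z;\tau,\sigma)$ — both immediate from the product definition of $\Gamma$, and exactly the sort of identity collected in Subsection~\ref{sec:theta-comp} — peel off first $\tau$ and then $8\eta$:
\[
\frac{1}{\Gamma(t+2\eta;\tau,8\eta)}=\theta_0(6\eta-t;8\eta)\,\theta_0(-t-2\eta;\tau)\,\Gamma(-t-2\eta;\tau,8\eta).
\]
Substituting this and cancelling the now-common factor $\Gamma(-t-2\eta;\tau,8\eta)$ reduces the claim to the theta-function identity
\[
\theta_0(6\eta-t;8\eta)\cdot\frac{\theta_0(-t-2\eta;\tau)}{\theta_0(t+2\eta;\tau)}\cdot\frac{\theta_0(t-4\eta;8\eta)}{\theta_0(t+2\eta;8\eta)}=e^{-12\pi i\eta}\,\theta_0(t+4\eta;8\eta).
\]

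For this last step I would invoke three standard consequences of $\theta_0(z;\sigma)=(z;\sigma)(\sigma-z;\sigma)$: the inversion $\theta_0(-z;\sigma)=-e^{-2\pi iz}\theta_0(z;\sigma)$, the reflection symmetry $\theta_0(\sigma-z;\sigma)=\theta_0(z;\sigma)$, and the quasi-periodicity $\theta_0(z+\sigma;\sigma)=-e^{-2\pi iz}\theta_0(z;\sigma)$. Inversion (with $\sigma=\tau$, $z=t+2\eta$) turns $\theta_0(-t-2\eta;\tau)/\theta_0(t+2\eta;\tau)$ into $-e^{-2\pi i(t+2\eta)}$; the reflection symmetry (with $\sigma=8\eta$) gives $\theta_0(6\eta-t;8\eta)=\theta_0\big(8\eta-(t+2\eta);8\eta\big)=\theta_0(t+2\eta;8\eta)$, which cancels the denominator $\theta_0(t+2\eta;8\eta)$; and quasi-periodicity gives $\theta_0(t-4\eta;8\eta)=-e^{2\pi i(t-4\eta)}\theta_0(t+4\eta;8\eta)$. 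Multiplying the three scalar prefactors, $(-e^{-2\pi i(t+2\eta)})(-e^{2\pi i(t-4\eta)})=e^{-12\pi i\eta}$, which is exactly the asserted constant, and the surviving theta factor is $\theta_0(t+4\eta;8\eta)$ on both sides, so the identity follows. The argument is entirely bookkeeping; the one place that demands care — and hence the only (minor) obstacle — is verifying that the several exponential prefactors, each possibly carrying a sign, combine to precisely $e^{-12\pi i\eta}$ with no residual $t$-dependence, and keeping the needed $\theta_0$ and $\Gamma$ shift identities cleanly isolated (as in Subsection~\ref{sec:theta-comp}) is what makes this transparent.
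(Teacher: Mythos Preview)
Your proof is correct and follows essentially the same route as the paper: the paper packages the conversion of $\Gamma(t+2\eta;\tau,8\eta)^{-1}\theta_0(t+2\eta;\tau)^{-1}\theta_0(t+2\eta;8\eta)^{-1}$ into $-e^{-2\pi i t-4\pi i\eta}\Gamma(-t-2\eta;\tau,8\eta)$ as a separate lemma (Lemma~\ref{lem:sym-rearrange}) and then applies the quasi-periodicity $e^{-2\pi i t}\theta_0(t-4\eta;8\eta)=-e^{-8\pi i\eta}\theta_0(t+4\eta;8\eta)$, whereas you carry out both steps inline. The identities invoked (inversion and shift relations for $\Gamma$, and inversion, reflection, and quasi-periodicity for $\theta_0$) are identical.
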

\begin{proof}
Applying Lemma \ref{lem:sym-rearrange}, we find that
\begin{align*}
\wI(\lambda; \tau, \eta) &= -e^{-3\pi i \lambda - 4 \pi i \eta} \int_\gamma e^{-2\pi i t}\Gamma(t - 2\eta; \tau, 8\eta)\Gamma(-t - 2\eta; \tau, 8\eta) \theta_0(t + \lambda; \tau) \theta_0(t - 4 \eta; 8\eta) \theta_0(2t + 6\tau - 4\lambda + 1/2; 8\tau) dt\\
&= e^{-3\pi i \lambda - 12 \pi i \eta} \int_\gamma \Gamma(t - 2\eta; \tau, 8\eta)\Gamma(-t - 2\eta; \tau, 8\eta) \theta_0(t + 4 \eta; 8\eta) \theta_0(t + \lambda; \tau)  \theta_0(2t + 6\tau - 4\lambda + 1/2; 8\tau) dt.
\end{align*}
We conclude as desired that 
\begin{align*}
\wI(\lambda, \tau, \eta) &= \int_\gamma e^{-12 \pi i \eta - 3\pi i \lambda} \Gamma(t - 2\eta; \tau, 8\eta)\Gamma(-t - 2\eta; \tau, 8\eta) \theta_0(t + 4 \eta; 8\eta) \theta_0(t + \lambda; \tau)\theta_0(2t + 6\tau - 4\lambda + 1/2; 8\tau) dt\\
&= e^{-12\pi i \eta} \int_\gamma \wJ_1(t, \tau, \eta) \wJ_2(t, \lambda, \tau) dt. \qedhere
\end{align*}
\end{proof}

\begin{lemma} \label{lem:theta-simp}
We have that 
\[
\wJ_2(t, \lambda, \tau) - \wJ_2(-t, -\lambda,\tau) = \frac{2\theta_0(6\tau + 1/2; 8\tau)e^{\pi i \lambda - 2 \pi i t} \theta_0(t + \lambda; \tau)\theta_0(t - 2\lambda + 1/2; 2 \tau)}{\theta_0(1/2; 2\tau)}.
\]
\end{lemma}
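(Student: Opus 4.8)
The plan is to strip the common factor $\theta_0(t+\lambda;\tau)$ off both sides and then prove what remains directly from the Jacobi triple product.

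First I would rewrite $\wJ_2(-t,-\lambda,\tau)$. From the product definition one has the reflection identity $\theta_0(-z;\sigma)=-e^{-2\pi i z}\theta_0(z;\sigma)$, and a short computation combining this with $1$-periodicity and the $8\tau$-quasi-periodicity $\theta_0(z\pm 8\tau;8\tau)=-e^{\mp 2\pi i z}e^{\mp\ldots}\theta_0(z;8\tau)$ gives
\[
\theta_0(-t-\lambda;\tau)=-e^{-2\pi i(t+\lambda)}\theta_0(t+\lambda;\tau),\qquad \theta_0(-2t+6\tau+4\lambda+1/2;8\tau)=\theta_0(2t+2\tau-4\lambda+1/2;8\tau),
\]
so that $\wJ_2(-t,-\lambda,\tau)=-e^{\pi i\lambda-2\pi i t}\theta_0(t+\lambda;\tau)\theta_0(2t+2\tau-4\lambda+1/2;8\tau)$. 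Thus $\theta_0(t+\lambda;\tau)$ factors out of $\wJ_2(t,\lambda,\tau)-\wJ_2(-t,-\lambda,\tau)$, and since it also appears on the right side, the claim reduces to
\[
e^{-3\pi i\lambda}\theta_0(2t+6\tau-4\lambda+1/2;8\tau)+e^{\pi i\lambda-2\pi i t}\theta_0(2t+2\tau-4\lambda+1/2;8\tau)=\frac{2\theta_0(6\tau+1/2;8\tau)}{\theta_0(1/2;2\tau)}\,e^{\pi i\lambda-2\pi i t}\,\theta_0(t-2\lambda+1/2;2\tau).
\]
Substituting $w:=2t-4\lambda+1/2$ and dividing by $e^{\pi i\lambda-2\pi i t}$ (using $e^{-4\pi i\lambda+2\pi i t}=-ie^{\pi i w}$ and $t-2\lambda+1/2=w/2+1/4$), this becomes the clean two-variable identity
\[
-i e^{\pi i w}\theta_0(w+6\tau;8\tau)+\theta_0(w+2\tau;8\tau)=\frac{2\theta_0(6\tau+1/2;8\tau)}{\theta_0(1/2;2\tau)}\,\theta_0\!\left(\tfrac{w}{2}+\tfrac14;2\tau\right).
\]

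Next I would prove this by Laurent-expanding the two nome-$8\tau$ thetas via the Jacobi triple product $\theta_0(z;\sigma)=(\sigma;\sigma)^{-1}\sum_{n\in\ZZ}(-1)^n e^{\pi i\sigma n(n-1)}e^{2\pi i n z}$. In $\theta_0(w+6\tau;8\tau)$ the $\tau$-exponent collapses to $4\pi i\tau\,n(2n+1)$; reindexing by the odd integers $m=2n+1$ turns this into $\pi i(2\tau)m(m-1)$ with term $e^{\pi i w m}$, and, after incorporating the prefactor $-ie^{\pi i w}$, with coefficient exactly $(-i)^m$. In $\theta_0(w+2\tau;8\tau)$ the $\tau$-exponent collapses to $4\pi i\tau\,n(2n-1)$; reindexing by the even integers $m=2n$ again gives $\pi i(2\tau)m(m-1)$, $e^{\pi i w m}$, and coefficient $(-1)^{m/2}=(-i)^m$. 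Adding, the left side equals
\[
(8\tau;8\tau)^{-1}\sum_{m\in\ZZ}(-i)^m e^{\pi i(2\tau)m(m-1)}e^{\pi i w m}=(8\tau;8\tau)^{-1}\sum_{m\in\ZZ}e^{\pi i(2\tau)m(m-1)}e^{2\pi i m(w/2-1/4)},
\]
and since $\sum_m e^{\pi i\sigma m(m-1)}e^{2\pi i m z}=(\sigma;\sigma)\theta_0(z+1/2;\sigma)$ (the triple product with the half-period shift absorbing the sign), this is $\tfrac{(2\tau;2\tau)}{(8\tau;8\tau)}\theta_0(w/2+1/4;2\tau)$.

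It then remains only to match constants, i.e. to verify $\tfrac{(2\tau;2\tau)}{(8\tau;8\tau)}=\tfrac{2\theta_0(6\tau+1/2;8\tau)}{\theta_0(1/2;2\tau)}$, equivalently $(2\tau;2\tau)\theta_0(1/2;2\tau)=2(8\tau;8\tau)\theta_0(6\tau+1/2;8\tau)$. Expanding each side once more by the triple product reduces this to the elementary theta-series identity $\sum_{m\in\ZZ}p^{m(m-1)}=2\sum_{n\in\ZZ}p^{4n^2+2n}$ with $p=e^{2\pi i\tau}$: writing $m(m-1)=\tfrac{(2m-1)^2-1}{4}$, the left side is $p^{-1/4}\sum_{j\ \mathrm{odd}}p^{j^2/4}$, and splitting the odd integers into residues $\pm1\pmod 4$ (interchanged by $j\mapsto -j$) identifies it with $2p^{-1/4}\sum_{j\equiv 1(4)}p^{j^2/4}=2\sum_n p^{4n^2+2n}$. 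This last, routine theta-constant computation is of the kind collected in Subsection \ref{sec:theta-comp}.

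The only place that really demands care is the bookkeeping: extracting $\theta_0(t+\lambda;\tau)$ correctly means tracking several sign and exponential factors from the reflection and $8\tau$-quasi-periodicity relations, and the crux of the triple-product step is checking that the two reindexings — odd $m$ from one term, even $m$ from the other — glue into the single series with coefficient $(-i)^m$. The underlying content is light (only the triple product and elementary rearrangement), so I do not expect a genuine conceptual obstacle.
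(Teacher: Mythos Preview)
Your proposal is correct. The first step — rewriting $\wJ_2(-t,-\lambda,\tau)$ via the reflection and quasi-periodicity relations and factoring out $\theta_0(t+\lambda;\tau)$ — is exactly what the paper does. Where you diverge is in proving the residual one-variable identity
\[
\theta_0(2z+3\sigma+1/2;4\sigma)+e^{-2\pi i z}\theta_0(2z+\sigma+1/2;4\sigma)=\frac{2\theta_0(3\sigma+1/2;4\sigma)}{\theta_0(1/2;\sigma)}\,e^{-2\pi i z}\theta_0(z+1/2;\sigma)
\]
(your displayed identity after the substitution $w=2t-4\lambda+1/2$, with $\sigma=2\tau$). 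The paper isolates this as Lemma~\ref{lem:theta-simp2} and proves it by the standard structural argument: the left side is a degree-one theta function in $z$ with period $\sigma$ and the correct multiplier, it vanishes at $z=1/2$, hence equals a constant times $e^{-2\pi i z}\theta_0(z+1/2;\sigma)$, and the constant is fixed by evaluating at $z=0$. You instead expand both $8\tau$-thetas via the Jacobi triple product, observe that the odd-$m$ and even-$m$ reindexings glue into a single series with coefficient $(-i)^m$, resum to a $2\tau$-theta, and match the leftover constant by a direct theta-series calculation. Both arguments are short; the paper's ``zeros and multiplier'' route is slightly more conceptual and reusable (indeed the same template reappears in Lemma~\ref{lem:theta-simp3}), while your series computation is entirely self-contained and avoids stating a separate lemma.
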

\begin{proof}
Notice first that
\begin{align*}
\wJ_2(-t, -\lambda, \tau) &= e^{3\pi i \lambda} \theta_0(-t - \lambda; \tau) \theta_0(-2t + 6\tau + 4 \lambda + 1/2; 8\tau)\\
&= e^{3\pi i \lambda} \theta_0(\tau + t + \lambda; \tau) \theta_0(2t + 2\tau - 4 \lambda + 1/2; 8\tau)\\
&= - e^{\pi i \lambda - 2 \pi i t} \theta_0(t + \lambda; \tau) \theta_0(2t + 2\tau - 4 \lambda + 1/2; 8\tau).
\end{align*}
We conclude that 
\[
\wJ_2(t, \lambda, \tau) - \wJ_2(-t, -\lambda,\tau) = e^{-3\pi i \lambda} \theta_0(t + \lambda; \tau) \Big(\theta_0(2t + 6\tau - 4\lambda + 1/2; 8\tau) + e^{4 \pi i \lambda - 2\pi i t} \theta_0(2t + 2 \tau - 4 \lambda + 1/2; 8\tau)\Big).
\]
By Lemma \ref{lem:theta-simp2} applied with $z = t - 2 \lambda$ and $\sigma = 2 \tau$, we find that 
\begin{multline*}
\theta_0(2t + 6\tau - 4\lambda + 1/2; 8\tau) + e^{4 \pi i \lambda - 2\pi i t} \theta_0(2t + 2 \tau - 4 \lambda + 1/2; 8\tau) \\
= \frac{2\theta_0(6\tau + 1/2; 8\tau) e^{-2\pi i t + 4 \pi i \lambda} \theta_0(t - 2\lambda + 1/2; 2 \tau)}{\theta_0(1/2; 2\tau)}.
\end{multline*}
We conclude as desired that
\[
\wJ_2(t, \lambda, \tau) - \wJ_2(-t, -\lambda,\tau) = \frac{2\theta_0(6\tau + 1/2; 8\tau)e^{\pi i \lambda - 2 \pi i t} \theta_0(t + \lambda; \tau)\theta_0(t - 2\lambda + 1/2; 2 \tau)}{\theta_0(1/2; 2\tau)}. \qedhere
\]
\end{proof}

\begin{lemma} \label{lem:full-sym}
We have that
\begin{multline*}
\wJ_2(t, \lambda, \tau) - \wJ_2(t, -\lambda, \tau) + \wJ_2(-t, \lambda, \tau) - \wJ_2(-t, -\lambda, \tau) \\
= \frac{4\theta_0(6\tau +1/2; 8\tau) \theta_0(\lambda; \tau)}{\theta_0(1/2;2\tau)^3} e^{-3\pi i \lambda}\Big(\frac{\theta_0(2\lambda + 1/2;2\tau)}{\theta_0(\tau + 1/2;2\tau)} e^{-2\pi i t}\theta_0(t+\tau + 1/2; 2\tau) \theta_0(t+1/2;2\tau)^2\\ - \frac{\theta_0(\lambda+1/2;\tau)^2}{\theta_0(\tau; 2\tau)}e^{-2\pi i t}\theta_0(t+\tau + 1/2; 2\tau) \theta_0(t; 2\tau)^2\Big).
\end{multline*}
\end{lemma}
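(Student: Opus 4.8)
The plan is to collapse the fourfold symmetrization using two applications of Lemma~\ref{lem:theta-simp} and then to recognize what remains as an equality between theta functions of $t$ lying in a single finite-dimensional space, which can be checked at three points.

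First I would group the four terms as
\[
\wJ_2(t, \lambda, \tau) - \wJ_2(t, -\lambda, \tau) + \wJ_2(-t, \lambda, \tau) - \wJ_2(-t, -\lambda, \tau) = \big(\wJ_2(t,\lambda,\tau) - \wJ_2(-t,-\lambda,\tau)\big) + \big(\wJ_2(-t,\lambda,\tau) - \wJ_2(t,-\lambda,\tau)\big).
\]
Lemma~\ref{lem:theta-simp} evaluates the first bracket, and the second bracket is obtained from it by $t \mapsto -t$. Simplifying the reflected theta factors with $\theta_0(-z;\sigma) = -e^{-2\pi i z}\theta_0(z;\sigma)$ and the quasi-periodicity $\theta_0(z + \sigma; \sigma) = -e^{-2\pi i z}\theta_0(z;\sigma)$, and collecting the exponential prefactors, I expect to obtain
\[
\wJ_2(t, \lambda, \tau) - \wJ_2(t, -\lambda, \tau) + \wJ_2(-t, \lambda, \tau) - \wJ_2(-t, -\lambda, \tau) = \frac{2 \theta_0(6\tau + 1/2; 8\tau)}{\theta_0(1/2; 2\tau)}\, e^{-2\pi i t}\, \mathcal{B}(t),
\]
where
\[
\mathcal{B}(t) := e^{\pi i \lambda} \theta_0(t + \lambda; \tau) \theta_0(t - 2\lambda + 1/2; 2\tau) - e^{-\pi i \lambda} \theta_0(t - \lambda; \tau) \theta_0(t + 2\lambda - 1/2; 2\tau).
\]
Comparing with the asserted right-hand side, the lemma then reduces to the theta identity
\[
\mathcal{B}(t) = \frac{2 e^{-3\pi i \lambda} \theta_0(\lambda; \tau)}{\theta_0(1/2; 2\tau)^2}\, \theta_0(t + \tau + 1/2; 2\tau) \left( \frac{\theta_0(2\lambda + 1/2; 2\tau)}{\theta_0(\tau + 1/2; 2\tau)} \theta_0(t + 1/2; 2\tau)^2 - \frac{\theta_0(\lambda + 1/2; \tau)^2}{\theta_0(\tau; 2\tau)} \theta_0(t; 2\tau)^2 \right).
\]

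Both sides of this identity are entire and $1$-periodic in $t$, and a short quasi-periodicity computation shows each transforms by the factor $e^{-2\pi i(3t + \tau)}$ under $t \mapsto t + 2\tau$ (for $\mathcal{B}(t)$ the $\theta_0(\,\cdot\,;\tau)$ factors cause no trouble, and may be rewritten via $\theta_0(z;\tau) = \theta_0(z; 2\tau)\theta_0(z + \tau; 2\tau)$ if one prefers to work entirely with modular parameter $2\tau$). The space of entire functions with this automorphy is three-dimensional, so it is enough to verify the identity at three points, which I would take to be $t = -\tau - 1/2$, $t = 0$, and $t = -1/2$; one checks that no nonzero function in this space vanishes at all three of these, so that the three evaluation functionals are linearly independent and matching there forces equality. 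At $t = -\tau - 1/2$ the right-hand side vanishes through $\theta_0(t + \tau + 1/2; 2\tau)$, and for $\mathcal{B}$ one expands each theta factor by quasi-periodicity and verifies that the two terms cancel. At $t = 0$ and $t = -1/2$ one of the two terms on the right vanishes, while $\mathcal{B}(0) = 2 e^{-3\pi i \lambda} \theta_0(\lambda; \tau) \theta_0(2\lambda + 1/2; 2\tau)$ and $\mathcal{B}(-1/2) = -2 e^{-3\pi i \lambda} \theta_0(\lambda + 1/2; \tau) \theta_0(2\lambda; 2\tau)$ follow directly from the reflection formula; matching with the surviving term uses only the Landen-type identity $\theta_0(\lambda; \tau)\theta_0(\lambda + 1/2; \tau) = \theta_0(2\lambda; 2\tau)$, and produces exactly the prefactors $\theta_0(2\lambda + 1/2; 2\tau)/\theta_0(\tau + 1/2; 2\tau)$ and $\theta_0(\lambda + 1/2; \tau)^2/\theta_0(\tau; 2\tau)$.

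The argument is bookkeeping rather than insight; the main hazard is mismanaging the accumulated exponential and sign prefactors across the two applications of Lemma~\ref{lem:theta-simp}, the reflections, and the lattice shifts — in particular in the $t = -\tau - 1/2$ cancellation, where several quasi-periodicity factors must combine to zero exactly, and in the $t=-1/2$ comparison, where the Landen identity must be invoked with the correct sign. One could instead bypass the finite-dimensionality language by expanding all $\theta_0(\,\cdot\,;\tau)$ through the doubling formula and applying a three-term theta relation directly, but the three-point check seems cleanest.
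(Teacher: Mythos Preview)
Your argument is correct and follows essentially the same route as the paper. The paper likewise applies Lemma~\ref{lem:theta-simp} twice to reach your expression $\mathcal{B}(t)$ (with $\theta_0(t+2\lambda+1/2;2\tau)$ in place of your $\theta_0(t+2\lambda-1/2;2\tau)$, which is the same by $1$-periodicity), and then invokes Lemma~\ref{lem:theta-simp3}, whose proof is exactly your three-point check: one verifies the quasi-periodicity $f(t+2\tau)=e^{-2\pi i(3t+\tau)}f(t)$, observes the zero at $t=\tau+1/2$, divides it out, and evaluates the quotient at $t=0$ and $t=1/2$ to pin down the two remaining coefficients.
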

\begin{proof}
Denote the quantity on the left by $h(t, \lambda)$. Applying Lemma \ref{lem:theta-simp} twice, we conclude that
\begin{multline*}
h(t, \lambda) = \frac{2\theta_0(6\tau + 1/2; 8\tau)}{\theta_0(1/2; 2\tau)} e^{-2\pi i t}\\ \Big(e^{\pi i \lambda} \theta_0(t + \lambda; \tau) \theta_0(t - 2\lambda + 1/2; 2\tau) - e^{-\pi i \lambda} \theta_0(t - \lambda; \tau) \theta_0(t + 2 \lambda + 1/2; 2\tau)\Big).
\end{multline*}
Substituting in the result of Lemma \ref{lem:theta-simp3} yields the desired result.
\end{proof}

We are now ready to prove Theorem \ref{thm:eval3} by relating the value of the integral to its symmetrized version.

\begin{proof}[Proof of Theorem \ref{thm:eval3}]
Since the cycle $\gamma$ is invariant under the change of variables $t \mapsto -t$, we see that 
\[
\int_\gamma \wJ_1(t, \tau, \eta) \wJ_2(t, \lambda, \tau) dt = \int_\gamma \wJ_1(-t, \tau, \eta) \wJ_2(-t, \lambda, \tau) dt = \int_\gamma \wJ_1(t, \tau, \eta) \wJ_2(-t, \lambda, \tau) dt.
\]
By Lemma \ref{lem:int-rearrange}, this implies that 
\[
I(\lambda, \tau, \eta) = \frac{e^{-12\pi i \eta}}{2} \int_\gamma \wJ_1(t, \tau, \eta) \Big(\wJ_2(t, \lambda, \tau) - \wJ_2(t, -\lambda, \tau) + \wJ_2(-t, \lambda, \tau) - \wJ_2(-t, -\lambda, \tau)\Big) dt.
\]
Define the integral evaluations
\begin{align*}
I_1 &:= \int_\gamma \wJ_1(t, \tau, \eta) e^{-2\pi i t} \theta_0(t + \tau + 1/2; 2\tau) \theta_0(t + 1/2; 2 \tau)^2 dt\\
I_2 &:= \int_\gamma \wJ_1(t, \tau, \eta) e^{-2\pi i t} \theta_0(t + \tau + 1/2; 2\tau) \theta_0(t; 2 \tau)^2 dt
\end{align*}
so that by Lemma \ref{lem:full-sym} we have that 
\begin{equation} \label{eq:temp-form}
I(\lambda, \tau, \eta) = \frac{2e^{-12\pi i \eta - 3\pi i \lambda} \theta_0(6\tau + 1/2;8\tau) \theta_0(\lambda; \tau)}{\theta_0(1/2; 2\tau)^3} \Big(\frac{\theta_0(2\lambda + 1/2; 2\tau)}{\theta_0(\tau+1/2; 2\tau)} I_1 - \frac{\theta_0(\lambda + 1/2; \tau)^2}{\theta_0(\tau; 2\tau)} I_2\Big).
\end{equation}
By the integral evaluations of Lemmas \ref{lem:int-eval1} and \ref{lem:int-eval2}, we find with $\Gamma(z) := \Gamma(z; 2\tau, 8\eta)$ that
\begin{align*}
I_1&=\frac{2}{(2\tau; 2\tau)(8\eta; 8\eta)} \Gamma(-4\eta + \tau)\Gamma(6\eta + 1/2)\Gamma(-2\eta) \Gamma(2\eta +1/2)\Gamma(-2\eta + \tau)^2 \Gamma(-2\eta + 2\tau)\\
&\phantom{==} \Gamma(8\eta + 1/2)\Gamma(12\eta)\Gamma(8\eta + \tau + 1/2) \Gamma(4\eta + 1/2)\Gamma(\tau)\\
I_2&=-\frac{2}{(2\tau; 2\tau)(8\eta; 8\eta)} \Gamma(-4\eta + \tau)\Gamma(6\eta)\Gamma(-2\eta + 1/2) \Gamma(2\eta +1/2)\Gamma(-2\eta + \tau)\\
&\phantom{==} \Gamma(6\eta + \tau)\Gamma(-2\eta + \tau + 1/2)\Gamma(2\eta + \tau + 1/2)\Gamma(-2\eta + 2\tau)\\
&\phantom{==} \Gamma(8\eta + 1/2)\Gamma(12\eta +1/2)\Gamma(8\eta + \tau) \Gamma(4\eta)\Gamma(\tau+1/2).
\end{align*}
Having now expressed the integral as an explicit theta function in (\ref{eq:temp-form}), it remains only to simplify this theta function.  Define the quantity
\[
I_0(\lambda, \tau, \eta) := \frac{\theta_0(2\lambda+1/2; 2\tau)}{\theta_0(\tau + 1/2; 2\tau)} I_1 - \frac{\theta_0(\lambda + 1/2; \tau)^2}{\theta_0(\tau; 2\tau)} I_2
\]
so that 
\begin{equation} \label{eq:i-0-rel}
I(\lambda, \tau, \eta) = \frac{2e^{-12\pi i \eta - 3\pi i \lambda} \theta_0(6\tau + 1/2;8\tau) \theta_0(\lambda; \tau)}{\theta_0(1/2; 2\tau)^3} I_0(\lambda, \tau, \eta).
\end{equation}
We claim now that $I_0(2\eta, \tau, \eta) = 0$; indeed, we observe that
\[
\frac{I_1}{I_2} = - \frac{\theta_0(4\eta; 2\tau) \theta_0(\tau + 1/2; 2\tau) \theta_0(-2\eta + 1/2; 2\tau)}{\theta_0(-2\eta; 2\tau) \theta_0(4\eta +1/2; 2\tau) \theta_0(\tau; 2\tau) \theta_0(-2\eta + \tau; 2\tau)} \frac{1}{\Gamma(-2\eta + \tau + 1/2)\Gamma(2\eta + \tau + 1/2)}
\]
where
\[
\frac{\theta_0(4\eta; 2\tau)}{\theta_0(-2\eta; 2\tau) \theta_0(-2\eta + \tau; 2\tau)} = \frac{\theta_0(4\eta; 2\tau)}{\theta_0(-2\eta; \tau)} = - e^{4\pi i \eta} \theta_0(2\eta + 1/2; \tau)
\]
and 
\begin{multline*}
\Gamma(-2\eta + \tau + 1/2)\Gamma(2\eta + \tau + 1/2) = \frac{(\tau + 10\eta + 1/2; 2\tau, 8\eta)(\tau + 6\eta + 1/2; 2\tau, 8\eta)}{(\tau - 2 \eta + 1/2; 2 \tau, 8\eta)(\tau + 2\eta + 1/2; 2\tau, 8\eta)}\\ = \frac{1}{(\tau - 2\eta + 1/2; 2\tau)(\tau + 2\eta + 1/2; 2\tau, 8\eta)} = \frac{1}{\theta_0(\tau + 2\eta + 1/2; 2\tau)}
\end{multline*}
and
\[
\theta_0(-2\eta + 1/2; 2\tau) \theta_0(\tau + 2\eta + 1/2; 2\tau) = e^{-4\pi i \eta} \theta_0(2\eta + 1/2; \tau).
\]
Substituting in, we conclude that 
\[
\frac{I_1}{I_2} = \frac{\theta_0(\theta + 1/2; 2\tau) \theta_0(2\eta + 1/2; \tau)^2}{\theta_0(4\eta + 1/2; 2\tau) \theta_0(\tau; 2\tau)}
\]
and therefore that $I_0(2\eta, \tau, \eta) = 0$.  Notice that $I(\lambda, \tau, \eta) = I(-\lambda, \tau, \eta)$ by definition, so by (\ref{eq:i-0-rel}) we conclude that
\[
I_0(-\lambda, \tau, \eta) = -e^{-4\pi i \lambda} I_0(\lambda, \tau, \eta).
\]
and therefore that $I'(2\eta, \tau, \eta) = I'(-2\eta, \tau, \eta) = 0$.  Notice that $I'(\lambda, \tau, \eta)$ is a theta function in $\lambda$ with period $\tau$ and multiplier $e^{-4\pi i \lambda}$, hence we have that
\[
I'(\lambda, \tau, \eta) = C(\tau, \eta) \theta_0(\lambda - 2 \eta; \tau) \theta_0(\lambda + 2 \eta; \tau)
\]
for some function $C(\tau, \eta)$.  To compute the value of $C(\tau, \eta)$, we set $\lambda = 1/2$ and apply the alternate expression for $I_1$ in Lemma \ref{lem:theta-simp4} to obtain
\begin{align*}
C(\tau, \eta) &= \frac{I_0(1/2, \tau, \eta)}{\theta_0(-2\eta + 1/2; \tau)\theta_0(2\eta + 1/2; \tau)}\\
&= \frac{\theta_0(1/2; 2\tau)}{\theta_0(\tau + 1/2; 2\tau) \theta_0(-2\eta+1/2; \tau)\theta_0(2\eta+1/2; \tau)} I_1\\
&= 2 \frac{\Gamma(6\eta; \tau, 8\eta)}{\Gamma(2\eta; \tau, 8\eta)} \frac{\theta_0(1/2; 2\tau)(\tau + 1/2; \tau)\theta_0(2\eta+1/2; \tau) \theta_0(\tau + 2\eta + 1/2; \tau)}{\theta_0(\tau + 1/2; 2\tau) \theta_0(-2\eta+1/2; \tau)\theta_0(2\eta+1/2; \tau)(\tau; \tau) \theta_0(\tau + 4\eta; \tau)} \frac{1}{(4\eta; 4\eta) (2\eta + 1/2; 2\eta)}\\
&= 4 \frac{\Gamma(6\eta; \tau, 8\eta)}{\Gamma(2\eta; \tau, 8\eta)} \frac{(2 \tau + 1/2; 2\tau)^2(\tau + 1/2; \tau)}{(\tau + 1/2; 2\tau)^2(\tau; \tau) \theta_0(\tau + 4\eta; \tau)} \frac{1}{(4\eta; 4\eta) (2\eta + 1/2; 2\eta)}\\
&= 4 \frac{\Gamma(6\eta; \tau, 8\eta)}{\Gamma(2\eta; \tau, 8\eta)} \frac{(2 \tau + 1/2; 2\tau)^3}{(\tau + 1/2; 2\tau)(\tau; \tau)\theta_0(\tau + 4\eta; \tau)} \frac{1}{(4\eta; 4\eta) (2\eta + 1/2; 2\eta)}.
\end{align*}
We deduce now that 
\begin{align*}
I(\lambda, \tau, \eta) &= \frac{2 e^{-12\pi i \eta}\theta_0(6\tau + 1/2; 8\tau) }{\theta_0(1/2; 2\tau)^3}  C(\tau, \eta) e^{-3\pi i \lambda} \theta_0(\lambda; \tau) \theta_0(\lambda - 2\eta; \tau) \theta_0(\lambda + 2\eta; \tau)\\
&= e^{-12\pi i \eta}\frac{\Gamma(6\eta; \tau, 8\eta)}{\Gamma(2\eta; \tau, 8\eta)}\frac{(2\tau + 1/2; 4\tau)}{(2\tau + 1/2; 2\tau)^3(\tau + 1/2; 2\tau)(\tau; \tau)\theta_0(- 4\eta; \tau)}  \frac{1}{(4\eta; 4\eta) (2\eta + 1/2; 2\eta)}\\
&\phantom{==}  e^{-3\pi i \lambda} \theta_0(\lambda; \tau) \theta_0(\lambda - 2\eta; \tau) \theta_0(\lambda + 2\eta; \tau).
\end{align*}
Notice now that 
\begin{multline*}
\frac{(2\tau + 1/2; 4\tau)}{(2\tau + 1/2; 2\tau)^3(\tau + 1/2; 2\tau)(\tau; \tau)} = \frac{(2\tau + 1/2; 4\tau)}{(2\tau + 1/2; 2\tau)^2 (\tau + 1/2; \tau)(\tau; \tau)}
= \frac{(2\tau + 1/2; 4\tau)}{(2\tau + 1/2; 2\tau)^2 (2\tau; 2\tau)}\\
= \frac{(2\tau + 1/2; 4\tau)}{(2\tau + 1/2; 2\tau) (4\tau; 4\tau)}
= \frac{1}{(4\tau + 1/2; 4\tau) (4\tau; 4\tau)}
= \frac{1}{(8\tau; 8\tau)}.
\end{multline*}
We conclude as desired that 
\begin{multline*}
I(\lambda, \tau, \eta) = e^{-12\pi i \eta}\frac{\Gamma(6\eta; \tau, 8\eta)}{\Gamma(2\eta; \tau, 8\eta)}\frac{1}{(8\tau; 8\tau)\theta_0(- 4\eta; \tau)}\\  \frac{1}{(4\eta; 4\eta) (2\eta + 1/2; 2\eta)}  e^{-3\pi i \lambda} \theta_0(\lambda; \tau) \theta_0(\lambda - 2\eta; \tau) \theta_0(\lambda + 2\eta; \tau). \qedhere
\end{multline*}
\end{proof}

\subsection{Computations with theta functions} \label{sec:theta-comp}

In this section we perform some computations with theta functions which are used in the previous subsections.

\begin{lemma} \label{lem:sym-rearrange}
We have the identity
\[
\frac{\Gamma(t - 2\eta; \tau, 8\eta)}{\Gamma(t + 2 \eta; \tau, 8\eta)} \frac{1}{\theta_0(t + 2\eta; \tau)} \frac{1}{\theta_0(t + 2\eta; 8\eta)} = -e^{-2\pi i t - 4 \pi i \eta} \Gamma(t - 2\eta; \tau, 8\eta)\Gamma(-t - 2\eta; \tau, 8\eta).
\]
\end{lemma}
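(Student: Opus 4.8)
The plan is to peel the identity down to three elementary functional equations and then finish with a short bookkeeping computation. Throughout I work in additive notation, abbreviate $\sigma := 8\eta$, and suppress modular parameters where convenient. First I would record the two difference equations
\[
\frac{\Gamma(z+\sigma;\tau,\sigma)}{\Gamma(z;\tau,\sigma)} = \theta_0(z;\tau), \qquad \frac{\Gamma(z+\tau;\tau,\sigma)}{\Gamma(z;\tau,\sigma)} = \theta_0(z;\sigma),
\]
both of which drop out of $\Gamma(z;\tau,\sigma) = (\tau+\sigma-z;\tau,\sigma)/(z;\tau,\sigma)$ by telescoping one of the two indices in the double $q$-Pochhammer symbol (e.g. $(z;\tau,\sigma)/(z+\sigma;\tau,\sigma) = (z;\tau)$, and similarly for the numerator). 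Second, the quasi-periodicity $\theta_0(z+\tau;\tau) = -e^{-2\pi i z}\theta_0(z;\tau)$, which follows from the product formula for $\theta_0$ after shifting the summation index and using $(1-e^{-2\pi i z})/(1-e^{2\pi i z}) = -e^{-2\pi i z}$. Third, the reflection formula $\Gamma(z;\tau,\sigma)\Gamma(\tau+\sigma-z;\tau,\sigma) = 1$, which is immediate from the definition of $\Gamma$.

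Next I would assemble these into a single ``reflection across the lattice'' identity. Composing the two difference equations gives $\Gamma(w+\tau+\sigma;\tau,\sigma) = \theta_0(w+\sigma;\sigma)\,\theta_0(w;\tau)\,\Gamma(w;\tau,\sigma)$, and applying the quasi-periodicity with modular parameter $\sigma$ to rewrite $\theta_0(w+\sigma;\sigma) = -e^{-2\pi i w}\theta_0(w;\sigma)$ yields
\[
\Gamma(w+\tau+\sigma;\tau,\sigma) = -e^{-2\pi i w}\,\theta_0(w;\tau)\,\theta_0(w;\sigma)\,\Gamma(w;\tau,\sigma).
\]
Feeding this into the reflection formula with $z = -w$ (so that $\tau+\sigma-z = w+\tau+\sigma$) gives
\[
\Gamma(-w;\tau,\sigma) = \frac{-e^{2\pi i w}}{\theta_0(w;\tau)\,\theta_0(w;\sigma)\,\Gamma(w;\tau,\sigma)}.
\]

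Finally I would specialize $w = t+2\eta$ and $\sigma = 8\eta$ and substitute the resulting expression for $\Gamma(-t-2\eta;\tau,8\eta)$ into the right-hand side of the claimed identity. The prefactor $-e^{-2\pi i t - 4\pi i \eta}$ cancels exactly against the $-e^{2\pi i(t+2\eta)}$ produced by $\Gamma(-t-2\eta;\tau,8\eta)$, one copy of $\Gamma(t-2\eta;\tau,8\eta)$ cancels against the numerator, and what remains is precisely $\Gamma(t-2\eta;\tau,8\eta)/(\Gamma(t+2\eta;\tau,8\eta)\,\theta_0(t+2\eta;\tau)\,\theta_0(t+2\eta;8\eta))$, i.e. the left-hand side. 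I do not expect a genuine obstacle here: the content is entirely contained in the three standard functional equations, and the only thing requiring care is keeping the two modular parameters $\tau$ and $\sigma = 8\eta$ straight when invoking the difference equations and tracking the sign and exponential factors coming from quasi-periodicity.
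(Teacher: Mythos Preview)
Your proof is correct and is essentially the same argument as the paper's: both use the reflection formula $\Gamma(z)\Gamma(\tau+\sigma-z)=1$, the two difference equations $\Gamma(z+\sigma)/\Gamma(z)=\theta_0(z;\tau)$ and $\Gamma(z+\tau)/\Gamma(z)=\theta_0(z;\sigma)$, and the quasi-periodicity of $\theta_0$. The only cosmetic difference is that the paper applies reflection first (turning $1/\Gamma(t+2\eta)$ into $\Gamma(-t+6\eta+\tau)$) and then shifts down via the difference equations, whereas you first package the three ingredients into the general identity $\Gamma(-w)=-e^{2\pi i w}/(\theta_0(w;\tau)\theta_0(w;\sigma)\Gamma(w))$ and then specialize; the content is identical.
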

\begin{proof}
Applying the identity $\Gamma(t; \tau, \sigma)^{-1} = \Gamma(\tau + \sigma - t; \tau, \sigma)$, we obtain
\begin{align*}
\frac{\Gamma(t - 2\eta; \tau, 8\eta)}{\Gamma(t + 2 \eta; \tau, 8\eta)} \frac{1}{\theta_0(t + 2\eta; \tau)} \frac{1}{\theta_0(t + 2\eta; 8\eta)}
&= \Gamma(t - 2\eta; \tau, 8\eta)\Gamma(-t + 6\eta + \tau; \tau, 8\eta) \frac{1}{\theta_0(t + 2\eta; \tau)} \frac{1}{\theta_0(t + 2\eta; 8\eta)}\\
&= \Gamma(t - 2\eta; \tau, 8\eta)\Gamma(-t - 2\eta; \tau, 8\eta) \frac{\theta_0(- t - 2\eta; \tau)}{\theta_0(t + 2\eta; \tau)} \frac{\theta_0(-t + 6\eta; 8\eta)}{\theta_0(t + 2\eta; 8\eta)}\\
&= -e^{-2\pi i t - 4 \pi i \eta} \Gamma(t - 2\eta; \tau, 8\eta)\Gamma(-t - 2\eta; \tau, 8\eta). \qedhere
\end{align*}
\end{proof}

\begin{lemma} \label{lem:theta-simp2}
For any modular parameter $\sigma$ with $\Imm(\sigma) > 0$, we have
\[
\theta_0(2z + 3\sigma + 1/2; 4\sigma) + e^{-2\pi i z} \theta_0(2z + \sigma + 1/2; 4\sigma) = \frac{2\theta_0(3\sigma + 1/2; 4 \sigma) e^{-2\pi i z} \theta_0(z + 1/2; \sigma)}{\theta_0(1/2; \sigma)}.
\]
\end{lemma}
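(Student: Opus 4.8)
The plan is to verify this as an identity between two holomorphic functions of $z$ by checking that both sides transform the same way under the lattice $z \mapsto z + \sigma$ (and are entire), and then pinning down the single remaining constant by evaluating at a convenient point. Concretely, write $L(z)$ for the left-hand side and $R(z)$ for the right-hand side. Both $\theta_0(\cdot\,; \sigma)$ and $\theta_0(\cdot\,; 4\sigma)$ are entire and nonvanishing except on the obvious lattice translates, so $L$ and $R$ are entire; the ratio $L(z)/R(z)$, once we show it is invariant under $z \mapsto z+\sigma$ and under $z \mapsto z+1$, will be a bounded entire doubly-periodic function, hence constant, and that constant is seen to be $1$ by specialization.

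First I would record the two quasi-periodicity rules I need: from the product definition, $\theta_0(z+1;\sigma) = \theta_0(z;\sigma)$ and $\theta_0(z+\sigma;\sigma) = -e^{-2\pi i z}\theta_0(z;\sigma)$, and likewise $\theta_0(z+4\sigma;4\sigma) = -e^{-2\pi i z}\theta_0(z;4\sigma)$, $\theta_0(z+1;4\sigma)=\theta_0(z;4\sigma)$. Then I would compute the effect of $z \mapsto z+\sigma$ on each summand of $L$. On the first term, $\theta_0(2(z+\sigma)+3\sigma+1/2;4\sigma) = \theta_0((2z+3\sigma+1/2)+2\sigma;4\sigma)$; writing $2\sigma$ as "half" of the $4\sigma$-period requires care, so instead I would shift by noting $2z+5\sigma+1/2 = (2z+\sigma+1/2)+4\sigma$, which gives $\theta_0(2z+5\sigma+1/2;4\sigma) = -e^{-2\pi i(2z+\sigma+1/2)}\theta_0(2z+\sigma+1/2;4\sigma)$. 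Doing the analogous computation on the second summand (whose argument shifts from $2z+\sigma+1/2$ to $2z+3\sigma+1/2$, no $4\sigma$ wraparound, but it carries the prefactor $e^{-2\pi i z}$ which becomes $e^{-2\pi i(z+\sigma)}$) and combining, one finds that $L(z+\sigma) = e^{-2\pi i(\cdots)}L(z)$ for an explicit exponent; I would then check $R(z+\sigma)$ picks up the identical factor, using $\theta_0(z+1/2+\sigma;\sigma) = -e^{-2\pi i(z+1/2)}\theta_0(z+1/2;\sigma)$ and the explicit $e^{-2\pi i z}$ sitting in $R$. The $z\mapsto z+1$ invariance of the ratio is immediate since all arguments shift by an integer and the exponentials $e^{-2\pi i z}$ are $1$-periodic.

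The main obstacle — really just the one place that needs attention rather than a genuine difficulty — is getting the bookkeeping of the exponential prefactors exactly right, because the two summands on the left do not individually transform cleanly (one wraps the $4\sigma$ lattice, the other does not), so the claim that their sum transforms like $R$ is where the content lies; this is precisely the kind of identity the structure of the statement is designed to encode. Once quasi-periodicity matches, the ratio $L/R$ is entire and elliptic for the lattice $\mathbb{Z}+\sigma\mathbb{Z}$, hence constant, and I would evaluate at $z = -1/2$: there the prefactor $e^{-2\pi i z} = e^{\pi i} = -1$ and $\theta_0(-1/2+1/2;\sigma) = \theta_0(0;\sigma) = 0$, so $R(-1/2) = 0$ and I would instead need a point where $R$ is nonzero and $L$ is easy — e.g. $z$ chosen so that $2z+\sigma+1/2$ lands at a zero of $\theta_0(\cdot;4\sigma)$, killing one summand of $L$ and leaving a clean single-theta comparison, or simply matching the leading Laurent/Fourier coefficient in $e^{2\pi i z}$ on both sides as $\Imm(z)\to\infty$. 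Comparing that one coefficient fixes the constant to be $1$, completing the proof.
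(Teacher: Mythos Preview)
Your approach is essentially the paper's: exploit the quasi-periodicity under $z\mapsto z+\sigma$ (and trivially $z\mapsto z+1$) to reduce to a one-dimensional space of theta functions, then fix a constant. The paper does exactly this, calling the left side $g(z)$, checking $g(z+\sigma)=e^{-2\pi i z-2\pi i\sigma}g(z)$, and then evaluating at $z=0$ to get the constant $C=2\theta_0(3\sigma+1/2;4\sigma)/\theta_0(1/2;\sigma)$.

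There is, however, one genuine gap in your write-up. You assert that $L/R$ ``will be a bounded entire doubly-periodic function, hence constant,'' but $R$ has zeros (precisely the lattice $-1/2+\ZZ+\sigma\ZZ$, coming from $\theta_0(z+1/2;\sigma)$), so $L/R$ is a priori only meromorphic elliptic. To conclude it is entire you must check that $L$ vanishes at those points, i.e.\ that $L(1/2)=0$. This is exactly the step the paper singles out: it computes
\[
g(1/2)=\theta_0(3\sigma+1/2;4\sigma)-\theta_0(\sigma+1/2;4\sigma)=0,
\]
using the symmetry $\theta_0(4\sigma-w;4\sigma)=\theta_0(w;4\sigma)$ together with $1$-periodicity. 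Without this verification your ratio argument does not go through as stated. (An alternative patch: observe that $R$ has a single simple zero per fundamental domain, and a nonconstant elliptic function must have at least two poles with multiplicity, so $L/R$ is forced to be constant anyway. But you did not make that argument either.)

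A smaller point: your plan for pinning down the constant is unnecessarily circuitous. You correctly note $z=-1/2$ is useless since $R$ vanishes there, and then propose killing one summand of $L$ or comparing Fourier asymptotics. The paper simply evaluates at $z=0$, where the exponential is $1$ and $g(0)=\theta_0(3\sigma+1/2;4\sigma)+\theta_0(\sigma+1/2;4\sigma)=2\theta_0(3\sigma+1/2;4\sigma)$ by the same symmetry used above; this gives the constant in one line.
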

\begin{proof}
Denote the expression on the left by $g(z)$ and notice that 
\begin{multline*}
g(z + \sigma) = \theta_0(2z + 5 \sigma + 1/2; 4\sigma) + e^{-2\pi i z - 2 \pi i \sigma} \theta_0(2z + 3 \sigma + 1/2; 4\sigma)\\ = e^{-4\pi i z - 2\pi i \sigma} \theta_0(2z + \sigma + 1/2; 4\sigma) + e^{-2\pi i z - 2 \pi i \sigma} \theta_0(2z + 3 \sigma + 1/2; 4\sigma) = e^{-2\pi i z - 2 \pi i \sigma} g(z).
\end{multline*}
On the other hand, we see that
\[
g(1/2) = \theta_0(3\sigma + 1/2; 4\sigma) - \theta_0(\sigma + 1/2; 4\sigma) = 0,
\]
hence $g(z)$ is a degree $1$ theta function with period $\sigma$, multiplier $e^{-2\pi i z - 2\pi i \sigma}$ and zero at $z = 1/2$.  Therefore, it is given by $C e^{-2\pi i z} \theta_0(z + 1/2; \sigma)$ for some constant of proportionality $C$.  To determine the constant $C$, substitute $z = 0$ to obtain that 
\[
C = \frac{\theta_0(3\sigma + 1/2; 4 \sigma) + \theta_0(\sigma + 1/2; 4\sigma)}{\theta_0(1/2; \sigma)} = \frac{2\theta_0(3\sigma + 1/2; 4 \sigma)}{\theta_0(1/2; \sigma)}. \qedhere
\]
\end{proof}

\begin{lemma} \label{lem:theta-simp3}
We have that 
\begin{multline*}
e^{\pi i \lambda} \theta_0(t + \lambda; \tau) \theta_0(t - 2\lambda + 1/2; 2\tau) - e^{-\pi i \lambda} \theta_0(t - \lambda; \tau) \theta_0(t + 2 \lambda + 1/2; 2\tau)\\
= 2e^{-3\pi i \lambda} \frac{\theta_0(t + \tau + 1/2; 2\tau) \theta_0(\lambda; \tau)}{\theta_0(1/2;2\tau)^2} \Big(\frac{\theta_0(2\lambda + 1/2;2\tau)}{\theta_0(\tau + 1/2;2\tau)} \theta_0(t+1/2;2\tau)^2 - \frac{\theta_0(\lambda+1/2;\tau)^2}{\theta_0(\tau; 2\tau)} \theta_0(t; 2\tau)^2\Big).
\end{multline*}
\end{lemma}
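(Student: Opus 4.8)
The plan is to fix $\lambda$ and $\tau$ and regard both sides as functions of $t$, then to identify them as elements of the same small space of theta functions. First I would record the behavior under the two quasi-periods. Using $\theta_0(w+1;\sigma)=\theta_0(w;\sigma)$ and $\theta_0(w+\sigma;\sigma)=-e^{-2\pi i w}\theta_0(w;\sigma)$, one checks that each side is holomorphic, $1$-periodic in $t$, and transforms under $t\mapsto t+2\tau$ by the factor $e^{-6\pi i t-2\pi i\tau}$ — i.e. each is a degree-$3$ theta function for the lattice $\ZZ+2\tau\ZZ$, and the space $V$ of such functions has dimension $3$. (On the left-hand side this uses that, for the lattice $\ZZ+2\tau\ZZ$, the factor $\theta_0(t+\lambda;\tau)$ contributes degree $2$ while $\theta_0(t-2\lambda+1/2;2\tau)$ contributes degree $1$.)

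Next I would check that both sides satisfy the reflection relation $\phi(-t)=e^{-4\pi i t}\phi(t)$. For the right-hand side this is immediate from $\theta_0(-w;\sigma)=-e^{-2\pi i w}\theta_0(w;\sigma)$ and $1$-periodicity; for the left-hand side it follows from the same identities after a short manipulation that interchanges the two products. Combining this reflection relation with membership in $V$ forces both sides to vanish at the $2$-torsion point $t=-\tau-1/2$: evaluating the quasi-periodicity and the reflection relation there produces two expressions for the value whose ratio is $-1$. Dividing out $\theta_0(t+\tau+1/2;2\tau)$, which vanishes simply there, lands one in the $2$-dimensional space of degree-$2$ theta functions for $\ZZ+2\tau\ZZ$ with multiplier $e^{-4\pi i t}$, spanned by $\theta_0(t+1/2;2\tau)^2$ and $\theta_0(t;2\tau)^2$. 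Hence both sides have the form $a\,\theta_0(t+\tau+1/2;2\tau)\theta_0(t+1/2;2\tau)^2+b\,\theta_0(t+\tau+1/2;2\tau)\theta_0(t;2\tau)^2$ — manifestly so for the right-hand side — and it remains only to match the two coefficients.

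To pin down $a$ and $b$ I would evaluate both sides at $t=0$ and at $t=-1/2$. At $t=0$ the $\theta_0(t;2\tau)^2$ term drops out; a brief simplification of the left-hand side using $\theta_0(-\lambda;\tau)=-e^{-2\pi i\lambda}\theta_0(\lambda;\tau)$ and $\theta_0(1/2-2\lambda;2\tau)=e^{-4\pi i\lambda}\theta_0(2\lambda+1/2;2\tau)$ yields $2e^{-3\pi i\lambda}\theta_0(\lambda;\tau)\theta_0(2\lambda+1/2;2\tau)$, which matches the right-hand side at $t=0$. At $t=-1/2$ the $\theta_0(t+1/2;2\tau)^2$ term drops out; a similar simplification gives $-2e^{-3\pi i\lambda}\theta_0(\lambda+1/2;\tau)\theta_0(2\lambda;2\tau)$ for the left-hand side, and then the Landen-type duplication identity $\theta_0(2\lambda;2\tau)=\theta_0(\lambda;\tau)\theta_0(\lambda+1/2;\tau)$ turns this into $-2e^{-3\pi i\lambda}\theta_0(\lambda;\tau)\theta_0(\lambda+1/2;\tau)^2$, matching the right-hand side at $t=-1/2$. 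Since the difference of the two sides lies in $V$, obeys the reflection relation, and vanishes at $t=-\tau-1/2$, $t=0$, and $t=-1/2$, it is identically zero.

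The individual computations are all elementary, so the real difficulty is organizational: carefully tracking the exponential multipliers under both $t\mapsto t+1$ and $t\mapsto t+2\tau$ in the presence of the two moduli $\tau$ and $2\tau$, verifying the reflection symmetry of the left-hand side, and noticing that the $t=-1/2$ evaluation genuinely needs the duplication identity rather than a purely formal rearrangement. I do not expect any deeper input (such as a Riemann-type addition theorem) to be required, and the overall structure parallels the proof of Lemma~\ref{lem:theta-simp2}.
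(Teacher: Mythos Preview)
Your proposal is correct and follows essentially the same strategy as the paper: identify both sides as degree-$3$ theta functions on $\ZZ+2\tau\ZZ$, factor out $\theta_0(t+\tau+1/2;2\tau)$, and match coefficients in the two-dimensional span of $\theta_0(t;2\tau)^2$ and $\theta_0(t+1/2;2\tau)^2$ by evaluating at two points. The only notable variation is that you obtain the zero at $t=-\tau-1/2$ via the reflection relation $\phi(-t)=e^{-4\pi i t}\phi(t)$ combined with quasi-periodicity, whereas the paper verifies $f(\tau+1/2)=0$ by a direct theta computation; both then implicitly use the same Landen-type identity $\theta_0(2\lambda;2\tau)=\theta_0(\lambda;\tau)\theta_0(\lambda+1/2;\tau)$ in the coefficient evaluation.
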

\begin{proof}
Denote the quantity on the left by $f(t)$.  Notice that $f(t + 2 \tau) = e^{-6\pi i t - 2\pi i \tau} f(t)$ and that
\begin{align*}
f(\tau + 1/2) &= e^{\pi i \lambda} \theta_0(\tau + \lambda + 1/2; \tau) \theta_0(\tau - 2\lambda; 2\tau) - e^{-\pi i \lambda} \theta_0(\tau - \lambda + 1/2; \tau)\theta_0(\tau + 2\lambda; 2\tau)\\
&= e^{-\pi i \lambda} \theta_0(\lambda + 1/2; \tau) \theta_0(\tau + 2\lambda; 2\tau) - e^{-\pi i \lambda} \theta_0(\tau - \lambda + 1/2; \tau)\theta_0(\tau + 2\lambda; 2\tau)\\
&= 0.
\end{align*}
Therefore, we see that $f(t)$ is a theta function with period $2\tau$, multiplier $e^{-6\pi i t - 2\pi \tau}$, and a zero at $t = \tau + 1/2$.  Consider
\[
g(t) = \frac{f(t)}{\theta_0(t + \tau + 1/2; 2\tau)},
\]
which is a theta function with period $2\tau$ and multiplier $e^{-4\pi i t}$.  We may write
\[
g(t) = A \theta_0(t; 2\tau)^2 + B \theta_0(t + 1/2; 2\tau)^2
\]
for some constants $A$ and $B$.  Setting $t = 1/2$, we find that 
\begin{align*}
A = \frac{g(1/2)}{\theta_0(1/2; 2\tau)^2}
&= \frac{e^{\pi i \lambda} \theta_0(\lambda + 1/2; \tau) \theta_0(-2\lambda; 2\tau) - e^{-\pi i \lambda} \theta_0(-\lambda + 1/2; \tau) \theta_0(2\lambda; 2\tau)}{\theta_0(\tau; 2\tau) \theta_0(1/2;2\tau)^2}\\
&= \frac{-2e^{-3\pi i \lambda} \theta_0(\lambda + 1/2; \tau)^2 \theta_0(\lambda; \tau)}{\theta_0(\tau; 2\tau) \theta_0(1/2;2\tau)^2},
\end{align*}
and setting $t = 0$, we find that 
\begin{align*}
B = \frac{g(0)}{\theta_0(1/2; 2\tau)^2}
&= \frac{e^{\pi i \lambda} \theta_0(\lambda; \tau) \theta_0(-2\lambda + 1/2; 2\tau) - e^{-\pi i \lambda} \theta_0(-\lambda; \tau) \theta_0(2\lambda + 1/2; 2\tau)}{\theta_0(\tau + 1/2; 2\tau) \theta_0(1/2; 2\tau)^2}\\
&= \frac{2 e^{-3\pi i \lambda} \theta_0(\lambda; \tau)\theta_0(2\lambda + 1/2; 2\tau)}{\theta_0(\tau + 1/2; 2\tau) \theta_0(1/2; 2\tau)^2}.
\end{align*}
Putting these together yields the desired result.
\end{proof}

\begin{lemma} \label{lem:theta-simp4}
We have that 
\begin{multline*}
\frac{2}{(2\tau; 2\tau)(8\eta; 8\eta)} \Gamma(-4\eta + \tau)\Gamma(6\eta + 1/2)\Gamma(-2\eta) \Gamma(2\eta +1/2)\Gamma(-2\eta + \tau)^2 \Gamma(-2\eta + 2\tau)\\
\Gamma(8\eta + 1/2)\Gamma(12\eta)\Gamma(8\eta + \tau + 1/2) \Gamma(4\eta + 1/2)\Gamma(\tau)\\
= 2 \frac{\Gamma(6\eta; \tau, 8\eta)}{\Gamma(2\eta; \tau, 8\eta)} \frac{(\tau + 1/2; \tau)\theta_0(2\eta+1/2; \tau) \theta_0(\tau + 2\eta + 1/2; \tau)}{(\tau; \tau) \theta_0(\tau + 4\eta; \tau)} \frac{1}{(4\eta; 4\eta) (2\eta + 1/2; 2\eta)}.
\end{multline*}
\end{lemma}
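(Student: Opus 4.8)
The statement is an identity among products of elliptic gamma functions and $q$-Pochhammer symbols in which no free variable appears, so the plan is to transform the left-hand side into the right-hand side directly, using the functional equations of the elliptic gamma function together with the duplication formulas of \cite{FV:mult}. Throughout, $\Gamma(z)$ abbreviates $\Gamma(z; 2\tau, 8\eta)$, as in the surrounding argument. The tools are the reflection relation $\Gamma(z; a, b)\,\Gamma(a + b - z; a, b) = 1$; the two shift relations $\Gamma(z + a; a, b) = \theta_0(z; b)\,\Gamma(z; a, b)$ and $\Gamma(z + b; a, b) = \theta_0(z; a)\,\Gamma(z; a, b)$; the $\tau$-duplication formula $\Gamma(z; \tau, \sigma) = \Gamma(z; 2\tau, \sigma)\,\Gamma(z + \tau; 2\tau, \sigma)$ of \cite{FV:mult}, together with its degenerations $\theta_0(z; \tau) = \theta_0(z; 2\tau)\,\theta_0(z + \tau; 2\tau)$ and $(z; \tau) = (z; 2\tau)\,(z + \tau; 2\tau)$; and the elementary relations $\theta_0(z; \tau) = (z; \tau)(\tau - z; \tau)$, $(z; r) = (1 - e^{2\pi i z})\,(z + r; r)$, $\theta_0(-z; \tau) = -e^{-2\pi i z}\theta_0(z; \tau)$, and $\theta_0(z + \tau; \tau) = -e^{-2\pi i z}\theta_0(z; \tau)$.

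First I would use the shift relations to peel the ``large'' shifts off the left-hand side, writing for example $\Gamma(8\eta + 1/2) = \theta_0(1/2; 2\tau)\Gamma(1/2)$, $\Gamma(8\eta + \tau + 1/2) = \theta_0(\tau + 1/2; 2\tau)\Gamma(\tau + 1/2)$, $\Gamma(12\eta) = \theta_0(4\eta; 2\tau)\Gamma(4\eta)$, $\Gamma(-2\eta + 2\tau) = \theta_0(-2\eta; 8\eta)\Gamma(-2\eta)$, and $\Gamma(6\eta + 1/2) = \theta_0(-2\eta + 1/2; 2\tau)\Gamma(-2\eta + 1/2)$, and using reflection to replace $\Gamma(-4\eta + \tau)$ by $\Gamma(\tau + 12\eta)^{-1}$ before shifting. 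The reduced gamma factors then recombine in pairs of the form $\Gamma(z)\Gamma(z + \tau)$ under the duplication formula; for instance $\Gamma(-2\eta)\,\Gamma(-2\eta + \tau)$ and $\Gamma(-2\eta + \tau)\,\Gamma(-2\eta + 2\tau)$ assemble into $\Gamma(-2\eta; \tau, 8\eta)\,\Gamma(-2\eta + \tau; \tau, 8\eta)$, and after a further $8\eta$-shift and reflection the surviving $(\tau, 8\eta)$-gamma factors organize into the combination $\Gamma(6\eta; \tau, 8\eta)/\Gamma(2\eta; \tau, 8\eta)$ times explicit theta-function and exponential prefactors. In parallel, on the right-hand side I would apply the duplication formula to write $\Gamma(6\eta; \tau, 8\eta) = \Gamma(6\eta)\Gamma(6\eta + \tau)$ and $\Gamma(2\eta; \tau, 8\eta) = \Gamma(2\eta)\Gamma(2\eta + \tau)$, and rewrite the $\theta_0(\cdot; \tau)$ and $(\cdot; \tau)$ factors through the degenerate doublings and the quasi-periodicity of $\theta_0$. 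After these reductions, both sides are expressed through the same collection of $\Gamma(\cdot; 2\tau, 8\eta)$-factors, of $q$-Pochhammer symbols in the periods $2\tau$, $8\eta$, $4\eta$, $2\eta$, and of a single common exponential factor.

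It then remains to check that the two resulting normal forms coincide, which comes down to a string of elementary $q$-Pochhammer identities: $(\tau + 1/2; \tau) = \tfrac12(1/2; \tau)$ and $(2\eta + 1/2; 2\eta) = \tfrac12(1/2; 2\eta)$ account for the factor $2$, while relations of the form $(z; 2\tau)(z + \tau; 2\tau) = (z; \tau)$, the analogous period-halvings in the $\eta$-direction, and boundary identities such as $\Gamma(\tau; 2\tau, 8\eta) = (\tau; 2\tau)^{-1}$ reconcile the Pochhammer symbols attached to the various periods. I expect the main obstacle to be organizational rather than conceptual: the left-hand side carries about a dozen gamma factors with many half-integer and $\tau$-shifts, so the real work is to group them correctly and, above all, to keep track of every sign and exponential $-e^{\pm 2\pi i(\cdots)}$ generated by the quasi-periodicity relations along the way. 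No idea beyond the functional equations recalled above should be needed.
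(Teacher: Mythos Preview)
Your proposal is correct and follows essentially the same approach as the paper: both arguments reduce the identity by applying the reflection and shift relations for $\Gamma(\cdot;2\tau,8\eta)$, recombine the resulting factors via duplication formulas (the paper uses both the $\tau$-doubling and a fourfold $\eta$-direction product to form $\Gamma(2\eta+1/2;2\tau,2\eta)$ and $\Gamma(10\eta;\tau,8\eta)\Gamma(10\eta+\tau;\tau,8\eta)$), and then finish with elementary $q$-Pochhammer and $\theta_0$ simplifications. The only difference is ordering---the paper applies reflection first and then evaluates special values like $\Gamma(\tau)=(\tau;2\tau)^{-1}$ directly, whereas you plan to peel shifts first---but the ingredients and the nature of the bookkeeping are identical.
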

\begin{proof}
Denote the given expression by $S$.  We may simplify $S$ as
\begin{align*}
S&= \frac{2}{(2\tau; 2\tau)(8\eta; 8\eta)} \frac{\Gamma(6\eta + 1/2) \Gamma(2\eta + 1/2) \Gamma(8\eta + 1/2) \Gamma(12\eta) \Gamma(4\eta + 1/2) \Gamma(\tau)}{\Gamma(\tau + 12\eta) \Gamma(10\eta + 2\tau) \Gamma(10 \eta + \tau)^2 \Gamma(\tau + 1/2)\Gamma(10\eta)}\\
&= \frac{2}{(2\tau; 2\tau)(8\eta; 8\eta)} \frac{\Gamma(2\eta+1/2; 2\tau; 2\eta)}{\Gamma(10\eta; \tau, 8\eta) \Gamma(10\eta+\tau; \tau, 8\eta)} \frac{\Gamma(\tau) \Gamma(12\eta)}{\Gamma(\tau+1/2)\Gamma(\tau + 12\eta)}.
\end{align*}
Notice now the identities
\begin{align*}
\Gamma(2\eta + 1/2; 2\tau, 2\eta) &= \frac{(2\tau + 1/2; 2\tau, 2\eta)}{(2\eta+1/2; 2\tau, 2\eta)} = \frac{(2\tau + 1/2; 2\tau)}{(2\eta+1/2; 2\eta)}\\
\Gamma(\tau) &= (\tau; 2\tau)^{-1}\\
\Gamma(12\eta) &= \frac{\theta_0(4\eta; 2\tau)}{(4\eta; 8\eta)}\\
\Gamma(\tau + 1/2) &= (\tau + 1/2; 2\tau)^{-1}\\
\Gamma(\tau + 12\eta) &= \theta_0(\tau + 4\eta; 2\tau).
\end{align*}
Substituting in, we find that 
\begin{align*}
S &= 2 \frac{\Gamma(-2\eta; \tau, 8\eta)}{\Gamma(2\eta; \tau, 8\eta)} \frac{(2\tau + 1/2; 2\tau)\theta_0(4\eta; 2\tau)(\tau + 1/2; 2\tau)}{(2\tau; 2\tau)(\tau; 2\tau) \theta_0(\tau + 4\eta; 2\tau) \theta_0(2\eta; \tau)} \frac{1}{(8\eta; 8\eta) (2\eta + 1/2; 2\eta)(4\eta; 8\eta)}\\
&= 2 \frac{\Gamma(6\eta; \tau, 8\eta)}{\Gamma(2\eta; \tau, 8\eta)} \frac{(\tau + 1/2; \tau)\theta_0(2\eta+1/2; \tau)}{(\tau; \tau) \theta_0(\tau + 4\eta; 2\tau) \theta_0(-2\eta; \tau)} \frac{1}{(4\eta; 4\eta) (2\eta + 1/2; 2\eta)}\\
&= 2 \frac{\Gamma(6\eta; \tau, 8\eta)}{\Gamma(2\eta; \tau, 8\eta)} \frac{(\tau + 1/2; \tau)\theta_0(2\eta+1/2; \tau) \theta_0(\tau + 2\eta + 1/2; \tau)}{(\tau; \tau) \theta_0(\tau + 4\eta; \tau)} \frac{1}{(4\eta; 4\eta) (2\eta + 1/2; 2\eta)}.\qedhere
\end{align*}

\end{proof}

\subsection{Applications of the elliptic beta integral} \label{sec:int-eval-sec}

In this section we evaluate two integrals in Lemmas \ref{lem:int-eval1} and \ref{lem:int-eval2} using the elliptic beta integral of Theorem \ref{thm:ell-beta} introduced by Spiridonov in \cite{Spi00}. It is not obvious that these integrals may be evaluated by using the elliptic beta integral, and the main technique in these evaluations is to make the correct choice of modular parameters to use in the elliptic beta integral.

\begin{lemma} \label{lem:int-eval1}
We have that 
\begin{multline*}
\int_\gamma \Gamma(\pm t - 2\eta; \tau, 8\eta)\theta_0(t + 4 \eta; 8\eta) e^{-2\pi i t} \theta_0(t; 2\tau)^2 \theta_0(t + \tau + 1/2; 2\tau) dt \\
=  -\frac{2}{(2\tau; 2\tau)(8\eta; 8\eta)} \Gamma(-4\eta + \tau)\Gamma(6\eta)\Gamma(-2\eta + 1/2) \Gamma(2\eta +1/2)\Gamma(-2\eta + \tau) \Gamma(6\eta + \tau)\Gamma(-2\eta + \tau + 1/2)\\\Gamma(2\eta + \tau + 1/2)\Gamma(-2\eta + 2\tau)\Gamma(8\eta + 1/2)\Gamma(12\eta +1/2)\Gamma(8\eta + \tau) \Gamma(4\eta)\Gamma(\tau+1/2).
\end{multline*}
\end{lemma}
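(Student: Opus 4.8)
The plan is to proceed as in the proofs of Theorems~\ref{thm:eval1} and~\ref{thm:eval2}: rearrange the integrand into a constant multiple of the integrand of Spiridonov's elliptic beta integral (Theorem~\ref{thm:ell-beta}) for a well-chosen pair of modular parameters, apply that integral, and then collapse the resulting $\binom{6}{2}$-fold product of elliptic gamma factors into the stated expression. The shape of the right-hand side --- the prefactor $\tfrac{2}{(2\tau;2\tau)(8\eta;8\eta)}$ together with gamma factors that (in the conventions of Lemma~\ref{lem:theta-simp4}) have modulus $(2\tau,8\eta)$ --- indicates that the elliptic beta integral is to be applied with modular parameters $(2\tau,8\eta)$. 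The tools are the standard identities for $\Gamma$ and $\theta_0$: the reflection $\Gamma(z;\tau,\sigma)\Gamma(\tau+\sigma-z;\tau,\sigma)=1$, the shift relations $\Gamma(z+\tau;\tau,\sigma)/\Gamma(z;\tau,\sigma)=\theta_0(z;\sigma)$ and $\theta_0(z;\sigma)=\Gamma(z;\tau',\sigma)^{-1}\Gamma(\sigma-z;\tau',\sigma)^{-1}$ (valid for any auxiliary $\tau'$), the factorization $\Gamma(z;\tau,\sigma)=\Gamma(z;2\tau,\sigma)\Gamma(z+\tau;2\tau,\sigma)$, the quasi-periodicity $\theta_0(z+\sigma;\sigma)=-e^{-2\pi iz}\theta_0(z;\sigma)$, and the duplication formulas of~\cite{FV:mult}.

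In detail, I would first observe that the integrand is invariant under $t\mapsto t+1$, so (exactly as in Theorems~\ref{thm:eval1} and~\ref{thm:eval2}) one may freely replace $t$ by a suitable affine reparametrization without changing the value of the integral; this is used to align the half-integer and period shifts. Applying the identities above then rewrites $\Gamma(\pm t-2\eta;\tau,8\eta)$, $\theta_0(t+4\eta;8\eta)$, $\theta_0(t;2\tau)^2$, $\theta_0(t+\tau+\tfrac12;2\tau)$ and the exponential $e^{-2\pi it}$ (which is there precisely to absorb the quasi-periodicity factors produced along the way) as
\[
(\mathrm{const})\cdot\frac{\prod_{i=1}^6\Gamma(\pm t+s_i;2\tau,8\eta)}{\Gamma(\pm 2t;2\tau,8\eta)}
\]
for an explicit sextuple $(s_1,\dots,s_6)$, which one checks satisfies the balancing condition $\sum_i s_i=2\tau+8\eta$. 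Theorem~\ref{thm:ell-beta} then evaluates the integral as $(\mathrm{const})\cdot\tfrac{2}{(2\tau;2\tau)(8\eta;8\eta)}\prod_{i<j}\Gamma(s_i+s_j;2\tau,8\eta)$, and the final task is to match this $15$-fold product --- along with the overall constant and sign --- to the $14$-fold product in the statement. That last step is pure bookkeeping with the reflection, factorization and duplication identities, just as in the passages of the proofs of Theorems~\ref{thm:eval1} and~\ref{thm:eval2} labeled ``where we apply the duplication formulas for the elliptic gamma function of~\cite{FV:mult}''; pairs $\Gamma(s_i+s_j;2\tau,8\eta)$ whose arguments differ by a period fuse into single gamma factors of the complementary modulus, which accounts for the reduction from $15$ to $14$ factors and for the minus sign out front.

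The step I expect to be the real obstacle is the first one: finding the right reparametrization and the right parameters $(s_1,\dots,s_6)$. The integrand is visibly lopsided --- it carries exactly one nontrivially shifted theta for each period, namely $\theta_0(t+\tau+\tfrac12;2\tau)$ and $\theta_0(t+4\eta;8\eta)$, against the manifestly $t\mapsto -t$ symmetric data $\theta_0(t;2\tau)^2$ and $\Gamma(\pm t-2\eta;\tau,8\eta)$ --- so, as the introduction to Subsection~\ref{sec:int-eval-sec} emphasizes, it is not at all clear a priori that this is an instance of the elliptic beta integral at all; the whole content of the lemma lies in recognizing the choice of parameters for which the lopsided thetas conspire (together with $e^{-2\pi it}$) to restore $t\mapsto -t$ symmetry of the integrand. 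Once that is found, the rest of the argument is long but entirely mechanical.
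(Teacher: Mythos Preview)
Your proposal is correct and follows essentially the same route as the paper: rewrite the integrand as a constant times the elliptic beta integrand with modular parameters $(2\tau,8\eta)$, apply Theorem~\ref{thm:ell-beta}, and simplify. Two small corrections: no affine reparametrization is actually used here --- the paper works directly with $t$ and finds the sextuple $(s_1,\dots,s_6)=(-2\eta,\,-2\eta+\tau,\,8\eta,\,1/2,\,4\eta+1/2,\,\tau)$ (balancing holds modulo $1$, an analytic continuation of Theorem~\ref{thm:ell-beta}); and the reduction from $15$ to $14$ gamma factors is not a fusion of pairs but the vanishing of a single factor, namely $\Gamma(4\eta+\tau+1/2;2\tau,8\eta)=1$ since its argument is its own reflection $2\tau+8\eta-z$ modulo $1$.
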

\begin{proof}
Denote the integrand by $I(t)$.  Using the notation $\Gamma(z) := \Gamma(z; 2\tau, 8\eta)$, we notice that 
\begin{align*}
I(t) &= - \Gamma(\pm t - 2\eta; \tau, 8\eta)\theta_0(t + 4 \eta; 8\eta) \theta_0(t; 2\tau)\theta_0(t + 2\tau; 2\tau) \theta_0(t + \tau + 1/2; 2\tau)\\
&= -\Gamma(\pm t - 2\eta; \tau, 8\eta) \Gamma(\pm t + 4\eta + \tau; \tau, 8\eta) \Gamma(t + 8\eta; 2\tau, 8\eta) \Gamma(-t + 8\eta + 2\tau; 2\tau, 8\eta)\\
&\phantom{===} \Gamma(t + 2 \tau + 8\eta; 2\tau, 8\eta) \Gamma(-t + 8\eta; 2\tau, 8\eta) \Gamma(t + \tau + 8\eta + 1/2; 2\tau, 8\eta) \Gamma(-t + \tau + 8\eta + 1/2; 2\tau, 8\eta)\\
&= - \frac{\Gamma(\pm t - 2\eta) \Gamma(\pm t - 2\eta + \tau) \Gamma(\pm t + 4\eta + 2 \tau) \Gamma(\pm t + 8\eta) \Gamma(\pm t + 8\eta + 2\tau)}{\Gamma(\pm t + \tau + 1/2)}.
\end{align*}
Observe now that 
\begin{multline*}
\Gamma(\pm t + \tau + 1/2) = \frac{\Gamma(\pm t + 1/2; \tau, 8\eta)}{\Gamma(\pm t + 1/2)}= \frac{\Gamma(\pm 2t; 2\tau, 16\eta)}{\Gamma(\pm t + 1/2) \Gamma(\pm t; \tau, 8\eta)}\\ 
= \frac{\Gamma(\pm 2t)}{\Gamma(\pm t + 1/2) \Gamma(\pm 2t + 8\eta; 2\tau, 16\eta) \Gamma(\pm t)\Gamma(\pm t + \tau)}\\
= \frac{\Gamma(\pm 2t)}{\Gamma(\pm t + 1/2) \Gamma(\pm t + 4\eta)  \Gamma(\pm t + 4\eta + 1/2) \Gamma(\pm t)\Gamma(\pm t + \tau)}.
\end{multline*}
Substituting in and canceling terms, we find that 
\[
I(t) = - \frac{\Gamma(\pm t - 2\eta) \Gamma(\pm t - 2\eta + \tau) \Gamma(\pm t + 8\eta) \Gamma(\pm t + 1/2) \Gamma(\pm t + 4 \eta + 1/2) \Gamma(\pm t + \tau)}{\Gamma(\pm 2t)}.
\]
Notice now that 
\[
-2\eta - 2 \eta + \tau + 8\eta + 1/2 + 4 \eta + 1/2 + \tau = 2 \eta + 8\tau + 1,
\]
meaning that applying (an analytic continuation of) the elliptic beta integral with parameters $(-2\eta, -2\eta + \tau, 8\eta, 1/2, 4\eta +1/2, \tau)$ and periods $2\tau$ and $8\eta$ implies that
\begin{align*}
\int_\gamma I(t) dt &= -\frac{2}{(2\tau; 2\tau)(8\eta; 8\eta)} \Gamma(-4\eta + \tau)\Gamma(6\eta)\Gamma(-2\eta + 1/2) \Gamma(2\eta +1/2)\Gamma(-2\eta + \tau) \Gamma(6\eta + \tau)\Gamma(-2\eta + \tau + 1/2)\\
&\phantom{==}\Gamma(2\eta + \tau + 1/2)\Gamma(-2\eta + 2\tau)\Gamma(8\eta + 1/2)\Gamma(12\eta +1/2)\Gamma(8\eta + \tau) \Gamma(4\eta)\Gamma(\tau+1/2)\Gamma(4\eta+\tau+1/2)\\
&= -\frac{2}{(2\tau; 2\tau)(8\eta; 8\eta)} \Gamma(-4\eta + \tau)\Gamma(6\eta)\Gamma(-2\eta + 1/2) \Gamma(2\eta +1/2)\Gamma(-2\eta + \tau) \Gamma(6\eta + \tau)\Gamma(-2\eta + \tau + 1/2)\\
&\phantom{==} \Gamma(2\eta + \tau + 1/2)\Gamma(-2\eta + 2\tau)\Gamma(8\eta + 1/2)\Gamma(12\eta +1/2)\Gamma(8\eta + \tau) \Gamma(4\eta)\Gamma(\tau+1/2). \qedhere
\end{align*}
\end{proof}

\begin{lemma} \label{lem:int-eval2}
We have that 
\begin{multline*}
\int_\gamma \Gamma(\pm t - 2\eta; \tau, 8\eta)\theta_0(t + 4 \eta; 8\eta) e^{-2\pi i t} \theta_0(t + 1/2; 2\tau)^2 \theta_0(t + \tau + 1/2; 2\tau) dt \\
= \frac{2}{(2\tau; 2\tau)(8\eta; 8\eta)} \Gamma(-4\eta + \tau)\Gamma(6\eta + 1/2)\Gamma(-2\eta) \Gamma(2\eta +1/2)\Gamma(-2\eta + \tau)^2 \Gamma(-2\eta + 2\tau)\\
\Gamma(8\eta + 1/2)\Gamma(12\eta)\Gamma(8\eta + \tau + 1/2) \Gamma(4\eta + 1/2)\Gamma(\tau). 
\end{multline*}
\end{lemma}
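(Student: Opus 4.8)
The integrand here is that of Lemma \ref{lem:int-eval1} with $\theta_0(t;2\tau)^2$ replaced by $\theta_0(t+1/2;2\tau)^2$, so the plan is to repeat that proof almost verbatim. Throughout I would write $\Gamma(z):=\Gamma(z;2\tau,8\eta)$. The first step is to use the quasi-periodicity $\theta_0(z+2\tau;2\tau)=-e^{-2\pi i z}\theta_0(z;2\tau)$ at $z=t+1/2$: since $-e^{-\pi i}=1$, this gives $e^{-2\pi i t}\theta_0(t+1/2;2\tau)^2=\theta_0(t+1/2;2\tau)\,\theta_0(t+2\tau+1/2;2\tau)$ \emph{with a plus sign}, whereas the analogous manipulation in Lemma \ref{lem:int-eval1} produced a minus sign; this discrepancy already accounts for the opposite overall signs of the two evaluations.

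Next I would rewrite the remaining theta factors together with $\Gamma(\pm t-2\eta;\tau,8\eta)$ as ratios of elliptic gamma functions of periods $2\tau$ and $8\eta$, exactly as in the proof of Lemma \ref{lem:int-eval1}: using the reflection identity $\Gamma(z)\Gamma(2\tau+8\eta-z)=1$, the conversions $\theta_0(z;2\tau)=\Gamma(z+8\eta)\Gamma(-z+8\eta+2\tau)$ and $\theta_0(t+4\eta;8\eta)=\Gamma(\pm t+\tau+4\eta;\tau,8\eta)$, the period splitting $\Gamma(z;\tau,8\eta)=\Gamma(z)\Gamma(z+\tau)$, the duplication formulas of \cite{FV:mult}, and the $1$-periodicity $\Gamma(z+1)=\Gamma(z)$. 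After cancelling the surplus factors --- including the reflection-fixed factor $\Gamma(\pm t+\tau+4\eta)=1$, just as in Lemma \ref{lem:int-eval1} --- the integrand should collapse to $\prod_{i=1}^{6}\Gamma(\pm t+s_i)/\Gamma(\pm 2t)$ with $(s_1,\dots,s_6)=(-2\eta,\,-2\eta+\tau,\,8\eta+1/2,\,4\eta+1/2,\,0,\,\tau)$; this is the sextuple $(-2\eta,-2\eta+\tau,8\eta,1/2,4\eta+1/2,\tau)$ of Lemma \ref{lem:int-eval1} with the parameters $8\eta$ and $1/2$ replaced by $8\eta+1/2$ and $0$, which is precisely the change induced by $\theta_0(t;2\tau)^2\mapsto\theta_0(t+1/2;2\tau)^2$, and it still has $\sum_i s_i=2\tau+8\eta+1$, hence satisfies the balancing condition of the elliptic beta integral modulo $1$.

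Finally I would invoke the elliptic beta integral of Theorem \ref{thm:ell-beta} with modular parameters $2\tau$ and $8\eta$ --- after the same analytic continuation in the $s_i$ as in Lemma \ref{lem:int-eval1}, since several $s_i$ have non-positive imaginary part --- to obtain $\frac{2}{(2\tau;2\tau)(8\eta;8\eta)}\prod_{i<j}\Gamma(s_i+s_j)$. Among the fifteen factors $\Gamma(s_i+s_j)$ one has the reflection-cancelling pair $\Gamma(6\eta+\tau+1/2)\Gamma(2\eta+\tau+1/2)=1$, the single factor $\Gamma(4\eta+\tau+1/2)=1$ (dropped as in Lemma \ref{lem:int-eval1}), and a repeated factor $\Gamma(-2\eta+\tau)^2$; after these simplifications exactly the twelve factors displayed on the right-hand side remain, with overall sign $+$. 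The argument is mechanical once the sextuple is identified, so the main obstacle is the algebraic bookkeeping: tracking which of the many elliptic gamma factors cancel while the integrand is brought into the form of the elliptic beta integrand, pinning down $(s_1,\dots,s_6)$ exactly, and verifying that $\prod_{i<j}\Gamma(s_i+s_j)$ reduces to precisely the claimed product with the correct sign.
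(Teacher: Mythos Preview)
Your proposal is correct and follows essentially the same route as the paper's own proof: the same quasi-periodicity step producing the opposite sign from Lemma~\ref{lem:int-eval1}, the same rewriting in terms of $\Gamma(z;2\tau,8\eta)$ and the same duplication/reflection cancellations, and the same application of the elliptic beta integral with parameters $(-2\eta,-2\eta+\tau,8\eta+1/2,0,4\eta+1/2,\tau)$. The only difference is expository---you describe the cancellations slightly more compactly than the paper does---and your identification of the three trivial factors among the fifteen $\Gamma(s_i+s_j)$ and the repeated $\Gamma(-2\eta+\tau)^2$ exactly matches what the paper obtains.
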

\begin{proof}
Denote the integrand by $I(t)$.  Using the notation $\Gamma(z) := \Gamma(z; 2\tau, 8\eta)$, we notice that 
\begin{align*}
I(t) &= \Gamma(\pm t - 2\eta; \tau, 8\eta)\theta_0(t + 4 \eta; 8\eta) \theta_0(t + 1/2; 2\tau)\theta_0(t + 2\tau + 1/2; 2\tau) \theta_0(t + \tau + 1/2; 2\tau)\\
&= \Gamma(\pm t - 2\eta; \tau, 8\eta) \Gamma(\pm t + 4\eta + \tau; \tau, 8\eta) \Gamma(t + 8\eta + 1/2; 2\tau, 8\eta) \Gamma(-t + 8\eta + 2\tau + 1/2; 2\tau, 8\eta)\\
&\phantom{===} \Gamma(t + 2 \tau + 8\eta + 1/2; 2\tau, 8\eta) \Gamma(-t + 8\eta + 1/2; 2\tau, 8\eta) \Gamma(t + \tau + 8\eta + 1/2; 2\tau, 8\eta) \Gamma(-t + \tau + 8\eta + 1/2; 2\tau, 8\eta)\\
&= \frac{\Gamma(\pm t - 2\eta) \Gamma(\pm t - 2\eta + \tau) \Gamma(\pm t + 4\eta + 2 \tau) \Gamma(\pm t + 8\eta + 1/2) \Gamma(\pm t + 8\eta + 2\tau + 1/2)}{\Gamma(\pm t + \tau + 1/2)}.
\end{align*}
Observe now that 
\begin{multline*}
\Gamma(\pm t + \tau + 1/2) = \frac{\Gamma(\pm t + 1/2; \tau, 8\eta)}{\Gamma(\pm t + 1/2)}
= \frac{\Gamma(\pm 2t; 2\tau, 16\eta)}{\Gamma(\pm t + 1/2) \Gamma(\pm t; \tau, 8\eta)}\\ 
= \frac{\Gamma(\pm 2t)}{\Gamma(\pm t + 1/2) \Gamma(\pm 2t + 8\eta; 2\tau, 16\eta) \Gamma(\pm t)\Gamma(\pm t + \tau)}\\ 
= \frac{\Gamma(\pm 2t)}{\Gamma(\pm t + 1/2) \Gamma(\pm t + 4\eta)  \Gamma(\pm t + 4\eta + 1/2) \Gamma(\pm t)\Gamma(\pm t + \tau)}.
\end{multline*}
Substituting in and canceling terms, we find that 
\[
I(t) = - \frac{\Gamma(\pm t - 2\eta) \Gamma(\pm t - 2\eta + \tau) \Gamma(\pm t + 8\eta + 1/2) \Gamma(\pm t) \Gamma(\pm t + 4 \eta + 1/2) \Gamma(\pm t + \tau)}{\Gamma(\pm 2t)}.
\]
Notice now that 
\[
-2\eta - 2 \eta + \tau + 8\eta + 1/2 + 4 \eta + 1/2 + \tau = 2 \eta + 8\tau + 1,
\]
meaning that applying (an analytic continuation of) the elliptic beta integral with parameters $(-2\eta, -2\eta + \tau, 8\eta + 1/2, 0, 4\eta +1/2, \tau)$ and periods $2\tau$ and $8\eta$ implies that
\begin{align*}
\int_\gamma I(t) dt &= \frac{2}{(2\tau; 2\tau)(8\eta; 8\eta)} \Gamma(-4\eta + \tau)\Gamma(6\eta + 1/2)\Gamma(-2\eta) \Gamma(2\eta +1/2)\Gamma(-2\eta + \tau)\Gamma(6\eta + \tau + 1/2)\Gamma(-2\eta + \tau)\\
&\phantom{==} \Gamma(2\eta + \tau + 1/2)\Gamma(-2\eta + 2\tau)\Gamma(8\eta + 1/2)\Gamma(12\eta)\Gamma(8\eta + \tau + 1/2) \Gamma(4\eta + 1/2)\Gamma(\tau)\Gamma(4\eta+\tau+1/2)\\
&= \frac{2}{(2\tau; 2\tau)(8\eta; 8\eta)} \Gamma(-4\eta + \tau)\Gamma(6\eta + 1/2)\Gamma(-2\eta) \Gamma(2\eta +1/2)\Gamma(-2\eta + \tau)^2 \Gamma(-2\eta + 2\tau)\\
&\phantom{==} \Gamma(8\eta + 1/2)\Gamma(12\eta)\Gamma(8\eta + \tau + 1/2) \Gamma(4\eta + 1/2)\Gamma(\tau). \qedhere
\end{align*}
\end{proof}

\section{Felder-Varchenko functions and elliptic Macdonald polynomials} \label{sec:fv}

In this section, we introduce the Felder-Varchenko functions specialized to the three-dimensional irreducible representation of $U_q(\sl_2)$, explain their relation to the elliptic Macdonald polynomials defined in \cite{FV4}, and prove Theorems \ref{thm:fv-val1}, \ref{thm:fv-val2}, and \ref{thm:ellmac-val} on three special values for these functions.  We provide also some motivation for the form of these special values coming from $SL(3, \ZZ)$ modular properties of the Felder-Varchenko functions.  In this section, we will only use additive notation.

\subsection{Felder-Varchenko functions}

For modular parameters $\tau, \sigma$ with $\Imm(\tau), \Imm(\sigma) > 0$, the Felder-Varchenko function corresponding to the three-dimensional representation of $U_q(\sl_2)$ and studied in \cite{FTV, FTV2, FV2, FV} is the theta hypergeometric integral
\[
u(\lambda, \mu, \tau, \sigma, \eta) := e^{-\frac{\pi i \lambda \mu}{2\eta}} \int_\gamma \Omega_{2\eta}(t; \tau, \sigma) \frac{\theta(t + \lambda; \tau)}{\theta(t - 2\eta; \tau)} \frac{\theta(t + \mu; \sigma)}{\theta(t - 2\eta; \sigma)} dt,
\]
where the phase function is defined by
\[
\Omega_{2\eta}(t; \tau, \sigma) = \frac{\Gamma(t + 2\eta; \tau, \sigma)}{\Gamma(t - 2\eta; \tau, \sigma)}
\]
and the cycle $\gamma$ is the interval $[-1/2, 1/2]$ for $\Imm(\eta) > 0$ and is deformed to have the same sets of poles above and below for other $\eta$. These functions were initially defined as hypergeometric integral solutions to the $q$-KZB and $q$-KZB heat equations.

\subsection{Theta functions of level $\kappa$}

For a modular parameter $\tau$ with $\Imm(\tau) > 0$, a holomorphic function $f$ is a theta function of level $\kappa \geq 0$ if 
\[
f(\lambda + 2 r + 2s\tau) = e^{-2\pi i \kappa(s^2 \tau + s\lambda)} f(\lambda)
\]
for integers $r, s$.  A particular theta function of level $\kappa$ is given by
\[
\theta_{\mu, \kappa}(\lambda; \tau) := \sum_{n \in \ZZ + \frac{\mu}{2\kappa}} e^{2\pi i \kappa (n^2\tau + n \lambda)}.
\]

\begin{lemma} \label{lem:theta-prod}
We have
\[
\theta_{\mu, \kappa}(\lambda; \tau) = e^{\pi i \tau \frac{\mu^2}{2\kappa} + \pi i \lambda \mu} (2\kappa\tau; 2\kappa\tau) \theta_0(1/2 + \mu \tau + \kappa \tau + \kappa \lambda; 2\kappa\tau).
\]
\end{lemma}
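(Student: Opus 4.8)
The plan is to recognize the identity as a repackaging of the Jacobi triple product. First I would shift the summation index in the definition of $\theta_{\mu,\kappa}$, writing $n = m + \frac{\mu}{2\kappa}$ with $m \in \ZZ$. Expanding $(m + \frac{\mu}{2\kappa})^2 = m^2 + \frac{m\mu}{\kappa} + \frac{\mu^2}{4\kappa^2}$ and separating the $m$-independent terms gives
\[
\theta_{\mu,\kappa}(\lambda; \tau) = e^{\pi i \tau \frac{\mu^2}{2\kappa} + \pi i \lambda \mu} \sum_{m \in \ZZ} e^{2\pi i (\kappa m^2 \tau + m(\mu\tau + \kappa\lambda))},
\]
which already produces the claimed exponential prefactor; the series converges absolutely since $\Imm(\kappa\tau) > 0$. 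It then remains to show that the bilateral sum equals $(2\kappa\tau; 2\kappa\tau)\,\theta_0(1/2 + \mu\tau + \kappa\tau + \kappa\lambda; 2\kappa\tau)$.

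For this I would set $Q := e^{2\pi i \kappa\tau}$, so that $Q^2$ is the nome attached to the modular parameter $2\kappa\tau$, and $w := e^{2\pi i(\mu\tau + \kappa\lambda)}$, so that the remaining sum is $\sum_{m} Q^{m^2} w^m$. Applying the Jacobi triple product identity in the form
\[
\sum_{m \in \ZZ} Q^{m^2} w^m = \prod_{j \geq 1}(1 - Q^{2j})(1 + Q^{2j-1}w)(1 + Q^{2j-1}w^{-1}),
\]
I would then match the three families of factors against the $q$-Pochhammer definitions recorded in Section~\ref{sec:int-eval}: one has $(2\kappa\tau; 2\kappa\tau) = \prod_{j \geq 1}(1 - Q^{2j})$; and, writing $z := 1/2 + \mu\tau + \kappa\tau + \kappa\lambda$ so that $e^{2\pi i z} = -Qw$ and $e^{2\pi i(2\kappa\tau - z)} = -Qw^{-1}$, one gets $(z; 2\kappa\tau) = \prod_{j \geq 1}(1 + Q^{2j-1}w)$ and $(2\kappa\tau - z; 2\kappa\tau) = \prod_{j \geq 1}(1 + Q^{2j-1}w^{-1})$, hence $\theta_0(z; 2\kappa\tau)$ is the product of the last two families. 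Multiplying everything together reproduces exactly the right-hand side of the triple product identity, which completes the proof.

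There is no genuine obstacle here; the only care required is bookkeeping of the shift by $1/2$ inside the argument of $\theta_0$ (which flips the sign, converting the $(1 - \cdot)$ Pochhammer factors into the $(1 + \cdot)$ factors of the triple product) and of the summand $\kappa\tau$ (which supplies the extra factor of $Q$ needed to produce the odd exponents $2j - 1$). If one prefers not to invoke the Jacobi triple product as a black box, an alternative is to check that both sides are theta functions of level $\kappa$ in $\lambda$ with the same quasi-periodicity multipliers and the same simple zero, and then pin down the constant by evaluating at one convenient value of $\lambda$; but the triple product route is shorter.
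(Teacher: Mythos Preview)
Your proposal is correct and follows essentially the same approach as the paper: both shift the summation index to extract the exponential prefactor and then apply the Jacobi triple product to the remaining bilateral sum. Your write-up is in fact more detailed than the paper's, which simply cites the Jacobi triple product and records the resulting product form without spelling out the bookkeeping with $Q$, $w$, and the $1/2$-shift.
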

\begin{proof}
By the Jacobi triple product formula, we have
\begin{align*}
\theta_{\mu, \kappa}(\lambda; \tau) &= \sum_{n \in \ZZ} e^{2\pi i \kappa \tau n^2 + 2\pi i \tau \mu n + \pi i \tau \frac{\mu^2}{2\kappa} + 2 \pi i \kappa \lambda n + \pi i \lambda \mu}\\
&= e^{\pi i \tau \frac{\mu^2}{2\kappa} + \pi i \lambda \mu} \sum_{n \in \ZZ} e^{2\pi i \kappa \tau n^2 + (2\pi i \tau \mu + 2 \pi i \kappa \lambda) n}\\
&= e^{\pi i \tau \frac{\mu^2}{2\kappa} + \pi i \lambda \mu} (2\kappa\tau; 2\kappa\tau)(1/2 + \mu \tau + \kappa \tau + \kappa \lambda; 2\kappa\tau)(1/2 + \kappa \tau - \mu \tau - \kappa \lambda; 2\kappa\tau)\\
&= e^{\pi i \tau \frac{\mu^2}{2\kappa} + \pi i \lambda \mu} (2\kappa\tau; 2\kappa\tau) \theta_0(1/2 + \mu \tau + \kappa \tau + \kappa \lambda; 2\kappa\tau). \qedhere
\end{align*}
\end{proof}

\subsection{Hypergeometric theta functions and elliptic Macdonald polynomials}

In \cite{FV4}, Felder-Varchenko used theta functions of their hypergeometric integrals to define elliptic versions of Macdonald polynomials corresponding to type $\widehat{A}_1$ and $t = q^2$.  For a positive integer level $\kappa \geq 4$ and $\mu \neq \pm 1 \pmod{\kappa}$, they defined the non-symmetric hypergeometric theta function of level $\kappa + 2$ by 
\[
\wDelta_{\mu, \kappa}(\lambda; \tau, \eta) := \sum_{j \in 2\kappa \ZZ + \mu} u(\lambda, 2 \eta j, \tau, -2\eta \kappa, \eta) Q(2\eta j, -2\eta \kappa, \eta) e^{\pi i \frac{\tau + 4\eta}{2\kappa} j^2},
\]
where
\[
Q(\mu; \sigma, \eta) := \frac{\theta(4\eta; \sigma) \theta'(0; \sigma)}{\theta(\mu - 2\eta; \sigma) \theta(\mu + 2\eta; \sigma)}.
\]
They defined further the symmetrized hypergeometric theta function by 
\[
\Delta_{\mu, \kappa}(\lambda; \tau, \eta) := \wDelta_{\mu, \kappa}(\lambda; \tau, \eta) - \wDelta_{\mu, \kappa}(-\lambda; \tau, \eta).
\]
These functions admit the following convergence properties and integral form.

\begin{prop}[{\cite[Theorem 3.1]{FV4}}] \label{prop:htf-prop}
Suppose that $\Imm(\eta) < 0$, $\Imm(\tau) > 0$, and $j \tau + 4 \eta \notin \ZZ$ for any positive integer $j$.  Then $\wDelta_{\mu, \kappa}(\lambda; \tau, \eta)$ converges to a holomorphic function of $\lambda$ and admits the integral expression
\[
\wDelta_{\mu, \kappa}(\lambda; \tau, \eta) = e^{\frac{2\pi i \eta}{\kappa} \mu^2} Q(2\eta\mu; -2\eta\kappa, \eta) \wI_{\mu, \kappa}(\lambda; \tau, \eta)
\]
for 
\begin{multline*}
\wI_{\mu, \kappa}(\lambda; \tau, \eta) := \int_\gamma \Omega_{2\eta}(t; \tau, -2\eta\kappa) \frac{\theta(t + \lambda; \tau)}{\theta(t - 2\eta; \tau)} \frac{\theta(t + 2\eta \mu; -2\eta \kappa)}{\theta(t - 2\eta; -2\eta\kappa)} e^{-2\pi i \mu t/\kappa} \theta_{\mu, \kappa}(\frac{2}{\kappa}t - \lambda; \tau) dt\\
= e^{\pi i \tau \frac{\mu^2}{2\kappa} - \pi i \lambda \mu} (2 \kappa \tau; 2\kappa \tau) \int_\gamma \Omega_{2\eta}(t; \tau, -2\eta\kappa) \frac{\theta(t + \lambda; \tau)}{\theta(t - 2\eta; \tau)} \frac{\theta(t + 2\eta \mu; -2\eta \kappa)}{\theta(t - 2\eta; -2\eta\kappa)} \theta_0(1/2 + \mu \tau + \kappa \tau - \kappa \lambda + 2t; 2\kappa\tau) dt.
\end{multline*}
\end{prop}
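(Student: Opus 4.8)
The plan is to substitute the hypergeometric integral defining $u$ directly into the series for $\wDelta_{\mu,\kappa}$, to interchange the sum over $j$ with the integral over $\gamma$, and to resum the $j$-dependence of the integrand into a theta function of level $\kappa$; the passage from the first displayed form of $\wI_{\mu,\kappa}$ to the second is then just Lemma~\ref{lem:theta-prod}. Writing $\sigma:=-2\eta\kappa$ and using $e^{-\pi i\lambda\,(2\eta j)/(2\eta)}=e^{-\pi i\lambda j}$, the definition of $u(\lambda,2\eta j,\tau,\sigma,\eta)$ splits each summand as the $j$-independent factor $\Omega_{2\eta}(t;\tau,\sigma)\,\frac{\theta(t+\lambda;\tau)}{\theta(t-2\eta;\tau)}\,\frac{1}{\theta(t-2\eta;\sigma)}$ times
\[
T_j(t):=e^{-\pi i\lambda j}\,\theta(t+2\eta j;\sigma)\,Q(2\eta j;\sigma,\eta)\,e^{\pi i\frac{\tau+4\eta}{2\kappa}j^{2}} .
\]
Granting for the moment that $\sum_j$ and $\int_\gamma$ may be swapped, everything reduces to evaluating $\sum_{j\in 2\kappa\ZZ+\mu}T_j(t)$ and recognising it inside the integral.

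For the resummation, write $j=\mu+2\kappa m$ with $m\in\ZZ$, so that $2\eta j=2\eta\mu-2m\sigma$, and apply the quasi-periodicity of Jacobi's first theta function, $\theta(z-2m\sigma;\sigma)=e^{4\pi i m z-4\pi i m^{2}\sigma}\theta(z;\sigma)$ (immediate from the product definition of $\theta_0$), to the factor $\theta(t+2\eta j;\sigma)$ and, via $Q(\nu;\sigma,\eta)=\theta(4\eta;\sigma)\theta'(0;\sigma)/(\theta(\nu-2\eta;\sigma)\theta(\nu+2\eta;\sigma))$, to the factor $Q(2\eta j;\sigma,\eta)$. Tracking the resulting exponents, the terms proportional to $m$ and to $m^{2}$ that carry a factor of $\eta$ cancel among the $\theta$-factor, the $Q$-factor, $e^{-\pi i\lambda j}$ and the Gaussian, and one is left with
\[
\sum_{j\in 2\kappa\ZZ+\mu}T_j(t)=e^{\frac{2\pi i\eta}{\kappa}\mu^{2}}\,Q(2\eta\mu;\sigma,\eta)\,\theta(t+2\eta\mu;\sigma)\,e^{-2\pi i\mu t/\kappa}\sum_{m\in\ZZ}e^{2\pi i\kappa\tau m^{2}+2\pi i m(2t-\kappa\lambda+\mu\tau)} .
\]
Substituting $n=\tfrac{\mu}{2\kappa}+m$ in the remaining sum and comparing with $\theta_{\mu,\kappa}(x;\tau)=\sum_{n\in\ZZ+\mu/2\kappa}e^{2\pi i\kappa(n^{2}\tau+nx)}$ identifies it with $\theta_{\mu,\kappa}(\tfrac{2}{\kappa}t-\lambda;\tau)$, so that the swapped expression is exactly $e^{\frac{2\pi i\eta}{\kappa}\mu^{2}}Q(2\eta\mu;-2\eta\kappa,\eta)$ times the first integral in the Proposition. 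Feeding $\lambda'=\tfrac{2}{\kappa}t-\lambda$ into Lemma~\ref{lem:theta-prod} rewrites $e^{-2\pi i\mu t/\kappa}\theta_{\mu,\kappa}(\tfrac{2}{\kappa}t-\lambda;\tau)$ as $e^{\pi i\tau\mu^{2}/(2\kappa)-\pi i\lambda\mu}(2\kappa\tau;2\kappa\tau)\,\theta_0(1/2+\mu\tau+\kappa\tau-\kappa\lambda+2t;2\kappa\tau)$, yielding the second form of $\wI_{\mu,\kappa}$.

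The analytic step, which I expect to be the main obstacle, is legitimising the interchange of $\sum_j$ and $\int_\gamma$; this simultaneously gives convergence of the series, locally uniformly in $\lambda$, and hence holomorphy of $\wDelta_{\mu,\kappa}(\cdot;\tau,\eta)$. Since $\Imm(\tau)>0$ and $\Imm(\eta)<0$ force $\Imm(\sigma)=-2\kappa\Imm(\eta)>0$, the phase function $\Omega_{2\eta}(\cdot;\tau,\sigma)$ and the theta denominators are holomorphic and non-vanishing on a fixed neighbourhood of the compact cycle $\gamma$; the hypothesis $j\tau+4\eta\notin\ZZ$ for $j\ge 1$ guarantees that none of the $j$-shifted poles hit $\gamma$, so a single contour works uniformly in $j$. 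On $\gamma$ one then bounds $|T_j(t)|$ using the Gaussian growth envelope $|\theta(z;\sigma)|\ll_{\sigma} e^{\pi\Imm(z)^{2}/\Imm(\sigma)}$: with $\Imm(t)$ bounded on $\gamma$, the factor $\theta(t+2\eta j;\sigma)$ and the reciprocal theta factors of $Q(2\eta j;\sigma,\eta)$ produce competing Gaussians in $j$ whose $\eta$-dependent exponents cancel — exactly as in the formal resummation — against the corresponding part of $|e^{\pi i\frac{\tau+4\eta}{2\kappa}j^{2}}|$, leaving the net estimate $|T_j(t)|\ll e^{-\pi\Imm(\tau)j^{2}/(2\kappa)}$, uniform over $t\in\gamma$ and over $\lambda$ in compact subsets of $\CC$. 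This is summable in $j$, so dominated convergence applies and each finite-contour integral is holomorphic in $\lambda$, completing the argument. The pleasant point is that the very cancellation that collapses the $j$-sum into $\theta_{\mu,\kappa}$ also drives the convergence estimate, with the $\Imm(\tau)>0$ Gaussian supplying the final decay.
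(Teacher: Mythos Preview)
The paper does not supply a proof of this proposition; it is quoted from \cite[Theorem 3.1]{FV4} and used as input. Your reconstruction is essentially correct and follows the natural route: substitute the integral for $u$ into the series, use the quasi-periodicity of $\theta(\,\cdot\,;\sigma)$ under shifts by $2m\sigma$ (with $\sigma=-2\eta\kappa$) to pull out the $m$-independent factors, recognise the residual sum as $\theta_{\mu,\kappa}$, and apply Lemma~\ref{lem:theta-prod} for the second form. Your observation that the exact quasi-periodicity computation already gives $|T_{\mu+2\kappa m}(t)|=|C(t)|\,e^{-2\pi\kappa\,\Imm(\tau)\,m^{2}+O(m)}$ uniformly on $\gamma$ is the clean way to justify the interchange; the separate Gaussian-envelope bounds on $\theta$ are then unnecessary.

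One correction: you have misread the hypothesis $j\tau+4\eta\notin\ZZ$ for positive integers $j$. This $j$ is unrelated to the summation index $j\in 2\kappa\ZZ+\mu$; the poles of the integrand in $t$ are manifestly independent of the summation index, as your own setup shows. The condition instead prevents the upward poles of $\Omega_{2\eta}(t;\tau,\sigma)$ at $t=-2\eta-j\tau-m\sigma$ (from $\Gamma(t+2\eta;\tau,\sigma)$) from colliding with the downward poles at $t=2\eta+a$, $a\in\ZZ$ (from $\theta(t-2\eta;\tau)^{-1}$): such a collision, occurring precisely when $j\tau+4\eta\in\ZZ$, would pinch the cycle $\gamma$ and render the integral undefined. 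With this reinterpretation, your argument goes through.
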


The elliptic Macdonald polynomial for $t = q^2$ was defined in \cite[Section 5.2]{FV4} by 
\[
P_{\mu, \kappa}(\lambda; \tau, \eta) := e^{-\pi i \frac{4\eta + \tau}{2\kappa}(\mu + 2)^2 + \pi i 3\tau/4} \frac{\Delta_{\mu + 2, \kappa}(\lambda; \tau, \eta)}{\theta(\lambda - 2\eta; \tau)\theta(\lambda; \tau)\theta(\lambda + 2\eta; \tau)}.
\]
In \cite[Theorem 5.2]{FV4}, it was shown that as $\tau \to i \infty$, $P_{\mu, \kappa}(\lambda; \tau, \eta)$ converges to a constant multiple of the ordinary Macdonald polynomial $P_\mu(e^{\pi i \lambda}; q, q^2)$ for $\sl_2$.  It was conjectured in \cite{FV4} and proven in \cite{Sun:qafftr} that these elliptic Macdonald polynomials are related to the affine Macdonald polynomials defined in \cite{EK3} by a simple renormalization; we will discuss and exploit this relation further in Section \ref{sec:aff-ell}.

\subsection{Special values of Felder-Varchenko functions and elliptic Macdonald polynomials}

We are now ready to state our main results, which concern special values of parameters for which the integral formulas for Felder-Varchenko functions and elliptic Macdonald polynomials admit explicit evaluation.  These results correspond to the integral evaluations in Section \ref{sec:int-eval}.

\begin{remark}
In Theorems \ref{thm:fv-val1} and \ref{thm:fv-val2}, the cycle $\gamma$ in the definition of the Felder-Varchenko function must be deformed to consider $\eta = \pm 1/8$.  In this case, the deformed cycle lies above the pole at $t = -2\eta$ and below the pole at $t = 2\eta$ and separates all other poles above and below the real line.
\end{remark}

\begin{thm} \label{thm:fv-val1}
We have that 
\[
u(1/2, 1/2, \tau, \sigma, -1/8) = - (1 + i) \frac{\Gamma(3/4; \tau, \sigma)}{\Gamma(1/4; \tau, \sigma)} \frac{1}{(\tau; \tau)(\tau+1/2; 2\tau)} \frac{1}{(\sigma; \sigma)(\sigma + 1/2; 2\sigma)}.
\]
\end{thm}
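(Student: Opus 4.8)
The plan is to obtain this by directly specializing the defining integral of the Felder--Varchenko function and matching the result against the evaluation of Theorem~\ref{thm:eval2}. First I would substitute $\lambda = \mu = 1/2$ and $\eta = -1/8$ into
\[
u(\lambda, \mu, \tau, \sigma, \eta) = e^{-\frac{\pi i \lambda \mu}{2\eta}} \int_\gamma \Omega_{2\eta}(t; \tau, \sigma) \frac{\theta(t + \lambda; \tau)}{\theta(t - 2\eta; \tau)} \frac{\theta(t + \mu; \sigma)}{\theta(t - 2\eta; \sigma)}\, dt.
\]
Here $2\eta = -1/4$, so the exponential prefactor is $e^{-\pi i(1/4)/(-1/4)} = e^{\pi i} = -1$, the denominators become $\theta(t + 1/4; \tau)$ and $\theta(t + 1/4; \sigma)$, and the phase function specializes to $\Omega_{-1/4}(t; \tau, \sigma) = \Gamma(t - 1/4; \tau, \sigma)/\Gamma(t + 1/4; \tau, \sigma)$, which is exactly the elliptic gamma ratio appearing in the integrand of Theorem~\ref{thm:eval2}.

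Next I would rewrite the two Jacobi theta ratios in terms of $\theta_0$ via $\theta(z; \tau) = i e^{\pi i \tau/4 - \pi i z}(\tau;\tau)\theta_0(z; \tau)$. In each of $\theta(t + 1/2; \tau)/\theta(t + 1/4; \tau)$ and $\theta(t + 1/2; \sigma)/\theta(t + 1/4; \sigma)$ the factors $i$, $e^{\pi i \tau/4}$ (resp.\ $e^{\pi i \sigma/4}$), and $(\tau;\tau)$ (resp.\ $(\sigma;\sigma)$) cancel, leaving the scalar $e^{-\pi i((t+1/2)-(t+1/4))} = e^{-\pi i/4}$; the two ratios together contribute $e^{-\pi i/2} = -i$. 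Combining with the prefactor $-1$ gives
\[
u(1/2, 1/2, \tau, \sigma, -1/8) = i \int_\gamma \frac{\Gamma(t - 1/4; \tau, \sigma)}{\Gamma(t + 1/4; \tau, \sigma)} \frac{\theta_0(t + 1/2; \tau)}{\theta_0(t + 1/4; \tau)} \frac{\theta_0(t + 1/2; \sigma)}{\theta_0(t + 1/4; \sigma)}\, dt.
\]

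I would then check that the contour agrees with the one in Theorem~\ref{thm:eval2}. By the Remark preceding this theorem, at $\eta = -1/8$ the deformed cycle $\gamma$ runs from $-1/2$ to $1/2$, lies above the pole at $t = -2\eta = 1/4$ and below the pole at $t = 2\eta = -1/4$, and separates all other poles above and below the real line. Tracking the only real-axis poles of the integrand in $(-1/2, 1/2)$ --- namely $t = 1/4$ coming from $\Gamma(t - 1/4; \tau, \sigma)$ (with all other poles of this factor in the lower half-plane), and $t = -1/4$ coming from $\theta_0(t + 1/4; \tau)$ and $\theta_0(t + 1/4; \sigma)$, the reciprocal $\Gamma(t+1/4;\tau,\sigma)^{-1}$ contributing only poles above the real line --- shows this is precisely the contour of Theorem~\ref{thm:eval2}.

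Finally I would apply Theorem~\ref{thm:eval2} and simplify the scalar, using $i \cdot \bigl(-(1-i)\bigr) = -(i - i^2) = -(1+i)$, to conclude
\[
u(1/2, 1/2, \tau, \sigma, -1/8) = -(1+i)\frac{\Gamma(3/4; \tau, \sigma)}{\Gamma(1/4; \tau, \sigma)}\frac{1}{(\tau;\tau)(\tau + 1/2; 2\tau)}\frac{1}{(\sigma;\sigma)(\sigma + 1/2; 2\sigma)}.
\]
There is no substantive obstacle: the entire argument is a specialization plus an appeal to Theorem~\ref{thm:eval2}, and the only places demanding care are the correct accounting of the phase factors produced by the $\theta$-to-$\theta_0$ conversion and the verification that the deformed cycle in the definition of $u$ at $\eta = -1/8$ is the same as the cycle used in Theorem~\ref{thm:eval2}.
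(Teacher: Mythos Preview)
Your proposal is correct and follows essentially the same approach as the paper's proof: substitute the special values into the defining integral, convert the Jacobi $\theta$ ratios to $\theta_0$ ratios (picking up the scalar $-e^{-\pi i/2} = i$), and then invoke Theorem~\ref{thm:eval2}. Your contour verification is more detailed than what appears in the paper, but the argument is the same.
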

\begin{proof}
By definition and Theorem \ref{thm:eval2}, we find that 
\begin{align*}
u(1/2, 1/2, \tau, \sigma, -1/8) &= - \int_\gamma \frac{\Gamma(t - 1/4; \tau, \sigma)}{\Gamma(t + 1/4; \tau, \sigma)} \frac{\theta(t + 1/2; \tau)}{\theta(t + 1/4; \tau)} \frac{\theta(t + 1/2; \sigma)}{\theta(t + 1/4; \sigma)} dt\\
&= - e^{-\pi i/2} \int_\gamma \frac{\Gamma(t - 1/4; \tau, \sigma)}{\Gamma(t + 1/4; \tau, \sigma)} \frac{\theta_0(t + 1/2; \tau)}{\theta_0(t + 1/4; \tau)} \frac{\theta_0(t + 1/2; \sigma)}{\theta_0(t + 1/4; \sigma)} dt\\
&= - (1 + i) \frac{\Gamma(3/4; \tau, \sigma)}{\Gamma(1/4; \tau, \sigma)} \frac{1}{(\tau; \tau)(\tau+1/2; 2\tau)} \frac{1}{(\sigma; \sigma)(\sigma + 1/2; 2\sigma)}. \qedhere
\end{align*}
\end{proof}

\begin{thm} \label{thm:fv-val2}
We have that 
\[
u(1/2, 1/2, \tau, \sigma, 1/8) = - (1 - i) \frac{\Gamma(3/4; \tau, \sigma)}{\Gamma(1/4; \tau, \sigma)} \frac{1}{(\tau; \tau)(\tau+1/2; 2\tau)} \frac{1}{(\sigma; \sigma)(\sigma + 1/2; 2\sigma)}.
\]
\end{thm}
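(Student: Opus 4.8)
The plan is to evaluate $u(1/2, 1/2, \tau, \sigma, 1/8)$ directly from the integral definition of the Felder-Varchenko function, following the proof of Theorem~\ref{thm:fv-val1} almost verbatim but with $\eta = 1/8$ replacing $\eta = -1/8$, so that the reduction is to Theorem~\ref{thm:eval1} instead of Theorem~\ref{thm:eval2}.

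First I would specialize the definition at $\lambda = \mu = 1/2$ and $\eta = 1/8$, so that $2\eta = 1/4$. The scalar prefactor becomes $e^{-\pi i \lambda\mu/(2\eta)} = e^{-\pi i} = -1$; the phase function is $\Omega_{1/4}(t;\tau,\sigma) = \Gamma(t+1/4;\tau,\sigma)/\Gamma(t-1/4;\tau,\sigma)$; and the two Jacobi-theta ratios are $\theta(t+1/2;\tau)/\theta(t-1/4;\tau)$ and $\theta(t+1/2;\sigma)/\theta(t-1/4;\sigma)$. By the Remark preceding Theorem~\ref{thm:fv-val1}, at $\eta = 1/8$ the deformed cycle $\gamma$ lies above the pole at $t = -1/4$, below the pole at $t = 1/4$, and separates all remaining poles, which is precisely the cycle appearing in Theorem~\ref{thm:eval1}.

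Next I would convert the Jacobi theta functions to the normalized $\theta_0$ via $\theta(z;\tau) = i e^{\pi i\tau/4 - \pi i z}(\tau;\tau)\theta_0(z;\tau)$. In each ratio the factors $i$, $e^{\pi i\tau/4}$, $(\tau;\tau)$ (and their $\sigma$-analogues) cancel, leaving the constant $e^{-\pi i(1/2 + 1/4)} = e^{-3\pi i/4}$ from each of the two ratios, hence $e^{-3\pi i/2} = i$ overall; together with the prefactor $-1$ this gives
\[
u(1/2,1/2,\tau,\sigma,1/8) = -i\int_\gamma \frac{\Gamma(t+1/4;\tau,\sigma)}{\Gamma(t-1/4;\tau,\sigma)}\,\frac{\theta_0(t+1/2;\tau)}{\theta_0(t-1/4;\tau)}\,\frac{\theta_0(t+1/2;\sigma)}{\theta_0(t-1/4;\sigma)}\,dt .
\]
I would then apply Theorem~\ref{thm:eval1} to the integral and collect the scalar factors, using the elementary identity $-i\cdot(-(1+i)) = -(1-i)$, to reach the stated closed form.

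There is no substantive obstacle here: this is simply the $\eta > 0$ mirror of the proof of Theorem~\ref{thm:fv-val1}, and all of the real work is already carried out in the proof of Theorem~\ref{thm:eval1} (which itself reduces to Spiridonov's elliptic beta integral). The only points demanding attention are bookkeeping ones — verifying that the deformed contour at the special value $\eta = 1/8$ genuinely coincides with the contour in Theorem~\ref{thm:eval1}, i.e. that no pole is crossed as $\eta$ is moved from the region $\Imm(\eta) > 0$ down to $1/8$, and propagating the phase factors $e^{\pm\pi i z}$ correctly through the $\theta\to\theta_0$ conversion so that the final power of $i$ is tracked accurately.
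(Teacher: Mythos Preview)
Your proposal is correct and follows the paper's proof essentially line for line: specialize the definition, convert $\theta$ to $\theta_0$ picking up the overall factor $-e^{-3\pi i/2}$, and apply Theorem~\ref{thm:eval1}. The contour and phase bookkeeping you describe is exactly what the paper does.
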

\begin{proof}
By definition and Theorem \ref{thm:eval1}, we find that 
\begin{align*}
u(1/2, 1/2, \tau, \sigma, 1/8) &= - \int_\gamma \frac{\Gamma(t + 1/4; \tau, \sigma)}{\Gamma(t - 1/4; \tau, \sigma)} \frac{\theta(t + 1/2; \tau)}{\theta(t - 1/4; \tau)} \frac{\theta(t + 1/2; \sigma)}{\theta(t - 1/4; \sigma)} dt\\
&= - e^{-3\pi i/2} \int_\gamma \frac{\Gamma(t + 1/4; \tau, \sigma)}{\Gamma(t - 1/4; \tau, \sigma)} \frac{\theta_0(t + 1/2; \tau)}{\theta_0(t - 1/4; \tau)} \frac{\theta_0(t + 1/2; \sigma)}{\theta_0(t - 1/4; \sigma)} dt\\
&= - (1 - i) \frac{\Gamma(3/4; \tau, \sigma)}{\Gamma(1/4; \tau, \sigma)} \frac{1}{(\tau; \tau)(\tau+1/2; 2\tau)} \frac{1}{(\sigma; \sigma)(\sigma + 1/2; 2\sigma)}. \qedhere
\end{align*}
\end{proof}

\begin{thm} \label{thm:ellmac-val}
We have that 
\[
P_{0, 4}(\lambda; \tau, \eta) = - 2\pi \frac{\Gamma(-6\eta; \tau, -8\eta)}{\Gamma(-2\eta; \tau, -8\eta)} \frac{1}{\theta_0(4\eta; \tau)(\tau;\tau)^3} \frac{(-4\eta; -4\eta)}{(-2\eta; -4\eta)}.
\]
\end{thm}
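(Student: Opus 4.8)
The plan is to specialize the relation between the elliptic Macdonald polynomial $P_{\mu,\kappa}(\lambda;\tau,\eta)$ and the integral $\wI_{\mu,\kappa}(\lambda;\tau,\eta)$ to $\mu=0$, $\kappa=4$, and then identify the resulting integral with the integral $I(\lambda;\tau,\eta)$ evaluated in Theorem \ref{thm:eval3}. First I would unwind the definitions: by Proposition \ref{prop:htf-prop} and the definition of $P_{\mu,\kappa}$, we have
\[
P_{0,4}(\lambda;\tau,\eta) = e^{-\pi i \frac{4\eta+\tau}{2\cdot 4}\cdot 4 + \pi i 3\tau/4}\,\frac{\wDelta_{2,4}(\lambda;\tau,\eta) - \wDelta_{2,4}(-\lambda;\tau,\eta)}{\theta(\lambda-2\eta;\tau)\theta(\lambda;\tau)\theta(\lambda+2\eta;\tau)},
\]
and $\wDelta_{2,4}(\lambda;\tau,\eta) = e^{\frac{2\pi i\eta}{4}\cdot 4}\,Q(4\eta;-8\eta,\eta)\,\wI_{2,4}(\lambda;\tau,\eta)$. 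The key computational step is to rewrite $\wI_{2,4}(\lambda;\tau,\eta)$, using the product formulas $\theta(z;\tau)=ie^{\pi i\tau/4-\pi i z}(\tau;\tau)\theta_0(z;\tau)$ and $\Omega_{2\eta}(t;\tau,\sigma)=\Gamma(t+2\eta;\tau,\sigma)/\Gamma(t-2\eta;\tau,\sigma)$ together with $\Gamma(z;\tau,\sigma)^{-1}=\Gamma(\tau+\sigma-z;\tau,\sigma)$, so that after setting $\sigma=-8\eta$ and substituting $\eta\mapsto-\eta$, $\tau\mapsto\tau$ to land in the regime $\Imm(\tau),\Imm(\eta)>0$ of Section \ref{sec:int-eval}, the integrand matches (up to an explicit prefactor involving $\theta_0$'s, $q$-Pochhammer symbols, $(2\kappa\tau;2\kappa\tau)=(8\tau;8\tau)$, and exponentials) the integrand of $\wI(\lambda;\tau,\eta)$ in \eqref{eq:asym-int}. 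In particular the factor $\theta_0(1/2+\mu\tau+\kappa\tau-\kappa\lambda+2t;2\kappa\tau)$ becomes $\theta_0(1/2 + 2\tau + 4\tau - 4\lambda + 2t;8\tau) = \theta_0(2t+6\tau-4\lambda+1/2;8\tau)$, which is exactly the last factor in \eqref{eq:asym-int}.

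Once the non-symmetric integrands are matched, the symmetrizations $\Delta_{2,4}(\lambda) - $ and $I(\lambda) = \wI(\lambda) - \wI(-\lambda)$ correspond, so Theorem \ref{thm:eval3} gives a closed form for the numerator of $P_{0,4}$. I would then substitute the evaluation of $I(\lambda;\tau,\eta)$ from Theorem \ref{thm:eval3}, observe that the factor $e^{-3\pi i\lambda}\theta_0(\lambda;\tau)\theta_0(\lambda-2\eta;\tau)\theta_0(\lambda+2\eta;\tau)$ cancels against the denominator $\theta(\lambda-2\eta;\tau)\theta(\lambda;\tau)\theta(\lambda+2\eta;\tau)$ after converting the latter to $\theta_0$'s (producing $i^3 e^{3\pi i\tau/4 - 3\pi i\lambda}(\tau;\tau)^3 \theta_0(\lambda;\tau)\theta_0(\lambda-2\eta;\tau)\theta_0(\lambda+2\eta;\tau)$, which accounts for both the $(\tau;\tau)^3$ in the denominator of the claimed formula and, with the leading exponential prefactor of $P_{0,4}$, the numerical constant $-2\pi$). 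The remaining work is to collect all the scalar factors: the $Q(4\eta;-8\eta,\eta)$ factor, the exponential prefactors from the $\theta\to\theta_0$ conversions and from $\wI_{2,4}$, the $e^{-12\pi i\eta}$ and $e^{\pi i\tau/4}$-type terms, and the $(8\tau;8\tau)$, $(4\eta;4\eta)$, $(2\eta+1/2;2\eta)$ Pochhammer factors, and to check they all combine to $-2\pi\,\Gamma(-6\eta;\tau,-8\eta)/\Gamma(-2\eta;\tau,-8\eta)\cdot(\theta_0(4\eta;\tau)(\tau;\tau)^3)^{-1}\cdot(-4\eta;-4\eta)/(-2\eta;-4\eta)$ after the sign substitution $\eta\mapsto-\eta$.

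I expect the main obstacle to be bookkeeping rather than conceptual: carefully tracking the exponential and quasi-constant prefactors through the two changes of variables (the shift matching $\wI_{2,4}$ to $\wI$, and the reflection $\eta\mapsto-\eta$ needed because $\Imm(\eta)<0$ in \cite{FV4} but $\Imm(\eta)>0$ in Theorem \ref{thm:eval3}), and making sure the contour $\gamma$ — which in both cases passes above $t=-2\eta$ and below $t=2\eta$ — is preserved or correctly deformed under these operations. The factor $Q(4\eta;-8\eta,\eta) = \theta(4\eta;-8\eta)\theta'(0;-8\eta)/(\theta(2\eta;-8\eta)\theta(6\eta;-8\eta))$ will need to be rewritten in terms of $\theta_0$ and $(-8\eta;-8\eta)$-Pochhammer symbols and then absorbed; identifying how its $\theta'(0;-8\eta) = 2\pi(-8\eta;-8\eta)^3\cdot(\text{exponential})$ contributes the $2\pi$ and part of the $(-4\eta;-4\eta)/(-2\eta;-4\eta)$ factor is the delicate point. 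Since every individual manipulation (Jacobi triple product, the $\theta/\theta_0$ dictionary, $\Gamma$-reflection, $\theta$-quasiperiodicity) is already used in Section \ref{sec:int-eval}, no new technique is required.
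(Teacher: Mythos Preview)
Your proposal is correct and follows essentially the same route as the paper's proof: both unwind the definitions via Proposition~\ref{prop:htf-prop} to reach $\wI_{2,4}(\lambda;\tau,\eta)$, identify it (after converting $\theta$ to $\theta_0$ and replacing $\eta$ by $-\eta$) with a scalar multiple of the integral $\wI(\lambda;\tau,-\eta)$ from \eqref{eq:asym-int}, apply Theorem~\ref{thm:eval3} to evaluate the symmetrized integral, and then cancel the triple theta product against the denominator of $P_{0,4}$ before collapsing the remaining $Q$-factor and Pochhammer symbols. The paper records the intermediate relation as $\wI_{2,4}(\lambda;\tau,\eta)=e^{\pi i\tau/2-8\pi i\eta}(8\tau;8\tau)\,\wI(\lambda;\tau,-\eta)$ and carries out exactly the bookkeeping you describe, including extracting the $2\pi$ from $\theta'(0;-8\eta)$.
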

\begin{proof}
By Proposition \ref{prop:htf-prop}, we have that 
\[
\wDelta_{2, 4}(\lambda; \tau, \eta) = e^{2\pi i \eta} Q(4\eta; -8\eta, \eta) \wI_{2, 4}(\lambda; \tau, \eta)
\]
and that
\begin{align*}
\wI_{2, 4}(\lambda; \tau, \eta) &= e^{\pi i \tau/2 - 3 \pi i \lambda - 8 \pi i \eta} (8\tau; 8\tau)\\
&\phantom{====} \int_\gamma \Omega_{2\eta}(t; \tau, -8\eta) \frac{\theta_0(t + \lambda; \tau)}{\theta_0(t - 2\eta; \tau)} \frac{\theta_0(t + 4\eta; -8\eta)}{\theta_0(t - 2\eta; -8\eta)} \theta_0(1/2 + 6\tau - 4\lambda + 2t; 8\tau) dt\\
&= e^{\pi i \tau/2 - 8 \pi i \eta} (8\tau; 8\tau) \wI(\lambda; \tau, -\eta)
\end{align*}
where $\wI(\lambda; \tau, \eta)$ is defined in (\ref{eq:asym-int}).  We conclude by Theorem \ref{thm:eval3} that
\begin{align*}
\Delta_{2, 4}(\lambda; \tau, \eta) &= e^{\pi i \tau/2 - 6\pi i \eta} Q(4\eta; -8\eta, \eta) (8\tau; 8\tau) I(\lambda; \tau, -\eta)\\
&= e^{\pi i \tau/2 + 6\pi i \eta} Q(4\eta; -8\eta, \eta)  \frac{\Gamma(-6\eta; \tau, -8\eta)}{\Gamma(-2\eta; \tau, -8\eta)} \frac{e^{-3\pi i \lambda} \theta_0(\lambda; \tau)\theta_0(\lambda - 2\eta; \tau)\theta_0(\lambda + 2\eta; \tau)}{\theta_0(4\eta; \tau)(-4\eta; -4\eta)(-2\eta+1/2; -2\eta)},
\end{align*}
where we note that
\[
Q(4\eta; -8\eta, \eta) = \frac{\theta(4\eta; -8\eta) \theta'(0, -8\eta)}{\theta(2\eta; -8\eta) \theta(6\eta; -8\eta)}= -2\pi i e^{4\pi i \eta}  \frac{\theta_0(4\eta; -8\eta) (-8\eta; -8\eta)^2}{\theta_0(2\eta; -8\eta) \theta_0(6\eta; -8\eta)}.
\]
Therefore, we find that 
\begin{align*}
P_{0, 4}(\lambda; \tau, \eta) &= e^{-2\pi i \eta + \pi i \tau/4} \frac{\Delta_{2, 4}(\lambda; \tau, \eta)}{\theta(\lambda - 2\eta; \tau)\theta(\lambda; \tau)\theta(\lambda + 2\eta; \tau)}\\
&= 2\pi e^{8\pi i \eta} \frac{\Gamma(-6\eta; \tau, -8\eta)}{\Gamma(-2\eta; \tau, -8\eta)} \frac{1}{(\tau; \tau)^3 \theta_0(4\eta; \tau)} \frac{\theta_0(4\eta;-8\eta) (-8\eta; -8\eta)^2}{\theta_0(2\eta; -8\eta)\theta_0(6\eta; -8\eta) (-4\eta; -4\eta)(-2\eta + 1/2; -2\eta)}.
\end{align*}
Notice now that 
\begin{align*}
&\frac{\theta_0(4\eta;-8\eta) (-8\eta; -8\eta)^2}{\theta_0(2\eta; -8\eta)\theta_0(6\eta; -8\eta) (-4\eta; -4\eta)(-2\eta + 1/2; -2\eta)}\\
&\phantom{======}= \frac{(4\eta; -8\eta)(-12\eta; -8\eta) (-8\eta; -8\eta)^2}{(2\eta; -8\eta)(-10\eta; -8\eta)(6\eta; -8\eta)(-14\eta; -8\eta) (-4\eta;-4\eta)(-2\eta+1/2;-2\eta)}\\
&\phantom{======}= - \frac{e^{8\pi i \eta}(-4\eta; -4\eta)^2}{(-4\eta; -4\eta)(-2\eta+1/2;-2\eta)} \frac{1}{e^{4\pi i \eta + 12\pi i \eta} (-2\eta; -4\eta)^2}\\
&\phantom{======}= - e^{-8\pi i \eta} \frac{(-4\eta; -4\eta)}{(-2\eta; -4\eta)},
\end{align*}
from which we conclude that 
\[
P_{0, 4}(\lambda; \tau, \eta) = - 2\pi \frac{\Gamma(-6\eta; \tau, -8\eta)}{\Gamma(-2\eta; \tau, -8\eta)} \frac{1}{\theta_0(4\eta; \tau)(\tau;\tau)^3} \frac{(-4\eta; -4\eta)}{(-2\eta; -4\eta)}. \qedhere
\]
\end{proof}

\begin{thm} \label{thm:ellmac-eval}
We have that 
\[
P_{\mu, \kappa}(4\eta; -8\eta, \eta) = -2 \pi e^{-12\pi i \eta - 2\pi i (\mu + 2)\eta} \frac{\Gamma(-6\eta; -2\kappa\eta, -8\eta)}{\Gamma(-2\eta; -2\kappa\eta, -8\eta)} \frac{\theta_0(2(\mu + 2)\eta; -2\kappa\eta) (-2\kappa \eta; -2\kappa\eta)^2}{(-8\eta; -8\eta) (-4\eta; -4\eta)^2 (-2\eta; -2\eta)}.
\]
\end{thm}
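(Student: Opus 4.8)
The plan is to mimic the proof of Theorem~\ref{thm:ellmac-val}: express $\Delta_{\mu + 2, \kappa}(4\eta; -8\eta, \eta)$ as a theta hypergeometric integral via Proposition~\ref{prop:htf-prop}, recognize the relevant integral as an instance of the integral $\wI(\lambda; \tau, \eta)$ of (\ref{eq:asym-int}), and then invoke Theorem~\ref{thm:eval3}. The new feature is that we specialize \emph{both} $\tau = -8\eta$ and $\lambda = 4\eta$; the point of the second specialization is that $4\eta = -\tfrac12 \cdot (-8\eta)$ is half the period, which is precisely what forces the factor $\theta(t + \lambda; \tau)$ appearing in $\wI_{\mu + 2, \kappa}$ to coincide, up to an elementary prefactor, with the fixed factor $\theta_0(t + 4\eta; -8\eta)$ appearing in (\ref{eq:asym-int}). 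Concretely, writing $\tau_e := -2\eta\kappa$ and $\eta_e := -\eta$, I would use Lemma~\ref{lem:theta-prod} to put $\wI_{\mu + 2, \kappa}(\pm 4\eta; -8\eta, \eta)$ in integrand form, convert every Jacobi theta to a $\theta_0$ (tracking prefactors via $\theta(z; \tau) = i e^{\pi i \tau/4 - \pi i z}(\tau; \tau)\theta_0(z; \tau)$), and use the rearrangement of Lemma~\ref{lem:sym-rearrange} (with $\eta \to -\eta$) to split the integrand as $\wJ_1(t, \tau_e, \eta_e)$ times a $\wJ_2$-type factor; the upshot for the $+4\eta$ summand, after bookkeeping, is an identity of the shape $\wI_{\mu + 2, \kappa}(4\eta; -8\eta, \eta) = c(\eta, \mu, \kappa)\, \wI\big(2\eta(\mu + 2); \tau_e, \eta_e\big)$.

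The main obstacle is the antisymmetrization $\Delta_{\mu + 2, \kappa}(4\eta; -8\eta, \eta) = \wDelta_{\mu + 2, \kappa}(4\eta; -8\eta, \eta) - \wDelta_{\mu + 2, \kappa}(-4\eta; -8\eta, \eta)$. Because $-4\eta$ and $4\eta$ differ by the full modular parameter $-8\eta$ (equivalently, by a half period of $2\kappa\tau = -16\kappa\eta$), the summand at $-4\eta$ does \emph{not} reduce to $\wI(-2\eta(\mu+2); \tau_e, \eta_e)$; instead one finds a $\wJ_1$-integral whose remaining factor is the big theta $\theta_0(2t + 6\tau_e - 4\lambda_e + \tfrac12; 8\tau_e)$ shifted by a half period of $8\tau_e$, with $\lambda_e = 2\eta(\mu + 2)$ (this is exactly the shift $\theta_{\mu + 2, \kappa} \rightsquigarrow \theta_{\mu + 2 - \kappa, \kappa}$ in the level-$\kappa$ theta). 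I would resolve this exactly as in the proof of Theorem~\ref{thm:eval3}: use the $t \mapsto -t$ symmetry of the cycle together with the evenness of $\wJ_1$, and collapse the two half-period-shifted big thetas into a single product via Lemma~\ref{lem:theta-simp2} (applied with $z = t - 4\eta(\mu + 2)$ and $\sigma = 2\tau_e = -4\kappa\eta$). This produces precisely the combination $\wJ_2(t, \lambda_e, \tau_e) - \wJ_2(-t, -\lambda_e, \tau_e)$ of Lemma~\ref{lem:theta-simp}, so the antisymmetrized integral becomes, up to explicit exponential and Pochhammer prefactors, $I\big(2\eta(\mu + 2); \tau_e, \eta_e\big)$ in the notation of Theorem~\ref{thm:eval3}.

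With this in hand, Theorem~\ref{thm:eval3} gives $I\big(2\eta(\mu+2); -2\eta\kappa, -\eta\big)$ as an explicit constant times $e^{-6\pi i \eta(\mu + 2)}\theta_0(2\eta(\mu + 2); -2\eta\kappa)\,\theta_0(2\eta(\mu + 1); -2\eta\kappa)\,\theta_0(2\eta(\mu + 3); -2\eta\kappa)$, the three theta factors coming from $\theta_0(\lambda_e; \tau_e)\theta_0(\lambda_e \pm 2\eta_e; \tau_e)$. I would then substitute into $\wDelta_{\mu + 2, \kappa} = e^{2\pi i \eta(\mu + 2)^2/\kappa}\,Q(2\eta(\mu + 2); -2\eta\kappa, \eta)\, \wI_{\mu + 2, \kappa}$ and observe the cancellations, just as for Theorem~\ref{thm:ellmac-val}: the factor $(2\kappa\tau; 2\kappa\tau) = (-16\kappa\eta; -16\kappa\eta)$ coming from $\wI_{\mu + 2, \kappa}$ cancels the $(8\tau_e; 8\tau_e)$ from Theorem~\ref{thm:eval3}; the denominator $\theta(2\eta(\mu + 1); -2\eta\kappa)\theta(2\eta(\mu + 3); -2\eta\kappa)$ of $Q$ cancels the corresponding two $\theta_0$ factors (leaving $\theta'(0; -2\eta\kappa)$ and $\theta(4\eta; -2\eta\kappa)$, the latter cancelling $\theta_0(-4\eta_e; \tau_e) = \theta_0(4\eta; -2\eta\kappa)$ in the denominator of Theorem~\ref{thm:eval3}). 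What survives is the single factor $\theta_0(2(\mu + 2)\eta; -2\kappa\eta)$ together with the elliptic gamma ratio $\Gamma(-6\eta; -2\kappa\eta, -8\eta)/\Gamma(-2\eta; -2\kappa\eta, -8\eta)$, the Pochhammers $(-2\kappa\eta; -2\kappa\eta)^2$, $(-4\eta; -4\eta)$, $(-2\eta + \tfrac12; -2\eta)$, and accumulated exponential prefactors.

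Finally, I would divide by $\theta(4\eta - 2\eta; -8\eta)\theta(4\eta; -8\eta)\theta(4\eta + 2\eta; -8\eta)$ and multiply by $e^{-\pi i \frac{4\eta + \tau}{2\kappa}(\mu + 2)^2 + \pi i 3\tau/4}$ (with $\tau = -8\eta$, so $4\eta + \tau = -4\eta$, and the $(\mu + 2)^2/\kappa$ exponents cancel the one left over from $\wDelta$). It then remains to simplify: convert $\theta(2\eta; -8\eta)\theta(4\eta; -8\eta)\theta(6\eta; -8\eta)$ to $\theta_0$'s and apply elementary $q$-Pochhammer identities of the same flavor used to finish the proof of Theorem~\ref{thm:eval3} (e.g.\ $\theta_0(2\eta; -8\eta) = (2\eta; -8\eta)(-10\eta; -8\eta)$ together with telescoping relations like $(-2\eta; -2\eta) = (-2\eta; -4\eta)(-4\eta; -4\eta)$) to reduce the Pochhammer part to $(-8\eta; -8\eta)^{-1}(-4\eta; -4\eta)^{-2}(-2\eta; -2\eta)^{-1}$, and to collect all exponential prefactors into $e^{-12\pi i \eta - 2\pi i (\mu + 2)\eta}$. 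The only genuinely delicate points are the half-period-shift manipulations of the second paragraph and keeping the numerous exponential factors straight; everything else is routine.
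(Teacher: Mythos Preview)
Your overall strategy---reduce $\Delta_{\mu+2,\kappa}(4\eta;-8\eta,\eta)$ to $I\big(2\eta(\mu+2);-2\kappa\eta,-\eta\big)$ and apply Theorem~\ref{thm:eval3}, then cancel $Q$ against the two outer theta factors and simplify the remaining Pochhammers---is exactly the paper's approach, and your third and fourth paragraphs track the paper's cancellations closely.

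The one place you diverge is the second paragraph, and there you make the problem harder than it is. Your claim that the $-4\eta$ summand ``does \emph{not} reduce to $\wI(-2\eta(\mu+2);\tau_e,\eta_e)$'' is not quite right: it \emph{does} reduce, after a single change of variables $t\mapsto -t$ on that integral alone. The point is that under $t\mapsto -t$ the big theta $\theta_0(2t+1/2-8(\mu+2)\eta-4\kappa\eta;-16\kappa\eta)$ becomes, via $\theta_0(z;\tau)=\theta_0(\tau-z;\tau)$ and $1$-periodicity, exactly $\theta_0(2t+1/2+8(\mu+2)\eta-12\kappa\eta;-16\kappa\eta)$, which is the big theta of $\wI(-2\eta(\mu+2);-2\kappa\eta,-\eta)$; the other factors transform the same way, and the rearrangement of Lemma~\ref{lem:sym-rearrange} (already implicit in the definition of $\wI$) lines everything up. So the paper simply shows $\wI_{\mu+2,\kappa}(4\eta;-8\eta,\eta)=c\,\wI(\lambda_e;\tau_e,\eta_e)$ and, after $t\mapsto -t$, $\wI_{\mu+2,\kappa}(-4\eta;-8\eta,\eta)=c\,\wI(-\lambda_e;\tau_e,\eta_e)$ with the \emph{same} constant $c$, whence the difference is $c\,I(\lambda_e;\tau_e,\eta_e)$ on the nose and Theorem~\ref{thm:eval3} applies as a black box. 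You do not need to invoke Lemma~\ref{lem:theta-simp2} or the $\wJ_1/\wJ_2$ decomposition here; that machinery is already packaged inside Theorem~\ref{thm:eval3}. Your route would presumably also arrive at the answer, but it redoes part of the proof of Theorem~\ref{thm:eval3} unnecessarily.
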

\begin{proof}
First, notice that
\begin{align*}
\wI_{\mu, \kappa}(4\eta; -8\eta, \eta) &= e^{-\frac{4\pi i \eta}{\kappa} \mu^2 - 4\pi i \eta \mu} (-16 \kappa \eta; -16 \kappa \eta)\\
& \int_\gamma \Omega_{2\eta}(t; -8\eta, -2\kappa \eta) \frac{\theta(t + 4\eta; -8\eta)}{\theta(t - 2\eta; -8\eta)} \frac{\theta(t + 2 \mu \eta; -2 \kappa \eta)}{\theta(t - 2\eta; -2\kappa\eta)} \theta_0(1/2 - 8\mu\eta - 12\kappa\eta + 2t; -16\kappa\eta) dt\\
&= e^{-\frac{4\pi i \eta}{\kappa} \mu^2 - 8\pi i \eta} (-16 \kappa \eta; -16 \kappa \eta) \wI(2\mu \eta; -2\kappa\eta, -\eta).
\end{align*}
On the other hand, we see that 
\begin{align*}
\wI_{\mu, \kappa}(-4\eta; -8\eta, \eta) &= e^{-\frac{4\pi i \eta}{\kappa} \mu^2 + 4\pi i \eta \mu} (-16 \kappa \eta; -16 \kappa \eta)\\
& \int_\gamma \Omega_{2\eta}(t; -8\eta, -2\kappa \eta) \frac{\theta(t - 4\eta; -8\eta)}{\theta(t - 2\eta; -8\eta)} \frac{\theta(t + 2 \mu \eta; -2 \kappa \eta)}{\theta(t - 2\eta; -2\kappa\eta)} \theta_0(1/2 - 8\mu\eta - 4\kappa\eta + 2t; -16\kappa\eta) dt\\
&= e^{-\frac{4\pi i \eta}{\kappa} \mu^2 + 2\pi i \eta\mu} (-16 \kappa \eta; -16 \kappa \eta)\\
& \int_\gamma \Omega_{2\eta}(t; -8\eta, -2\kappa \eta) \frac{\theta_0(t - 4\eta; -8\eta)}{\theta_0(t - 2\eta; -8\eta)} \frac{\theta_0(t + 2 \mu \eta; -2 \kappa \eta)}{\theta_0(t - 2\eta; -2\kappa\eta)} \theta_0(1/2 - 8\mu\eta - 4\kappa\eta + 2t; -16\kappa\eta) dt.
\end{align*}
Denote the integrand by 
\[
f(t) = e^{2\pi i \eta \mu} \Omega_{2\eta}(t; -8\eta, -2\kappa \eta) \frac{\theta_0(t - 4\eta; -8\eta)}{\theta_0(t - 2\eta; -8\eta)} \frac{\theta_0(t + 2 \mu \eta; -2 \kappa \eta)}{\theta_0(t - 2\eta; -2\kappa\eta)} \theta_0(1/2 - 8\mu\eta - 4\kappa\eta + 2t; -16\kappa\eta).
\]
Notice now that 
\begin{align*}
\frac{\Omega_{2\eta}(t; -8\eta, -2\kappa \eta)}{\theta_0(t - 2\eta; -8\eta) \theta_0(t - 2\eta; -2\kappa\eta)} &= \frac{\Gamma(t + 2 \eta; -8\eta, -2\kappa \eta)}{\Gamma(t - 2\eta; -8\eta, -2\kappa\eta)} \frac{1}{\theta_0(t - 2\eta; -8\eta) \theta_0(t - 2\eta; -2\kappa\eta)}\\
&= \frac{\Gamma(t + 2\eta; -8\eta, -2\kappa \eta) \Gamma(-t - 6\eta - 2\kappa\eta; -8\eta, -2\kappa\eta)}{\theta_0(t - 2\eta; -8\eta) \theta_0(t - 2\eta; -2\kappa\eta)}\\
&= - e^{-2\pi i t + 4\pi i \eta} \Gamma(\pm t + 2\eta; -8\eta, -2\kappa \eta).
\end{align*}
We conclude that 
\begin{align*}
f(-t) &= - e^{2\pi i t + 6\pi i \eta} \Gamma(\pm t + 2\eta; -8\eta, -2\kappa \eta) \theta_0(t - 4\eta; -8\eta) \theta_0(t - 2\mu \eta - 2\kappa \eta; -2\kappa\eta) \theta_0(1/2 + 8\mu \eta - 12\kappa \eta + 2t; -16\kappa\eta)\\
&= e^{-8\pi i \eta + 6\pi i \mu \eta} \frac{\Gamma(t + 2\eta; -8\eta, -2\kappa\eta)}{\Gamma(t - 2\eta; -8\eta, -2\kappa\eta)}\frac{\theta_0(t + 4\eta; -8\eta)}{\theta_0(t - 2\eta; -8\eta)} \frac{\theta_0(t - 2\mu \eta; -2\kappa\eta)}{\theta_0(t - 2\eta; -2\kappa\eta)} \theta_0(1/2 + 8\mu \eta - 12\kappa \eta + 2t; -16\kappa\eta).
\end{align*}
Because the cycle $\gamma$ is invariant under $t \mapsto -t$, we conclude that
\begin{multline*}
\wI_{\mu, \kappa}(-4\eta; -8\eta, \eta) = e^{-\frac{4\pi i \eta}{\kappa}\mu^2} (-16\kappa\eta; -16\kappa\eta) \int_\gamma f(t) dt\\ = e^{-\frac{4\pi i \eta}{\kappa}\mu^2} (-16\kappa\eta; -16\kappa\eta) \int_\gamma f(-t) dt = e^{-\frac{4\pi i \eta}{\kappa}\mu^2 - 8\pi i \eta} (-16\kappa\eta; -16\kappa\eta) \wI(-2\mu\eta; -2\kappa\eta, -\eta).
\end{multline*}
By Proposition \ref{prop:htf-prop}, we have that 
\begin{align*}
\Delta_{\mu, \kappa}(4\eta; -8\eta, \eta) &= e^{\frac{2\pi i \eta}{\kappa} \mu^2} Q(2\eta \mu; -2\kappa\eta, \eta) \Big(\wI_{\mu, \kappa}(4\eta; -8\eta, -\eta) - \wI_{\mu, \kappa}(-4\eta; -8\eta, -\eta)\Big)\\
&= e^{-\frac{2\pi i \eta}{\kappa}\mu^2 - 8\pi i \eta} (-16\kappa \eta; -16\kappa \eta) Q(2\eta \mu; -2\kappa\eta, \eta) I(2\mu\eta; -2\kappa\eta, -\eta).
\end{align*}
Applying Theorem \ref{thm:eval3} and noting that
\begin{align*}
Q(2\mu \eta; -2\kappa\eta, \eta) &= \frac{\theta(4\eta; -2\kappa \eta)\theta'(0, -2\kappa\eta)}{\theta(2\mu \eta - 2\mu; -2\kappa\eta) \theta(2\mu \eta + 2\eta; -2\kappa\eta)}\\
&= -2\pi i e^{-4\pi i \eta + 4\pi i \mu \eta} \frac{\theta_0(4\eta; -2\kappa\eta) (-2\kappa \eta, -2\kappa\eta)^2}{\theta_0(2\mu\eta - 2\eta; -2\kappa\eta)\theta_0(2\mu \eta + 2\eta; -2\kappa\eta)},
\end{align*}
we conclude that
\begin{align*}
\Delta_{\mu, \kappa}(4\eta; -8\eta, \eta) &= - 2\pi i e^{-\frac{2\pi i \eta}{\kappa}\mu^2 - 2\pi i \mu\eta} \frac{\Gamma(-6\eta; -2\kappa\eta, -8\eta)}{\Gamma(-2\eta; -2\kappa\eta, -8\eta)} \frac{\theta_0(2\mu\eta; -2\kappa\eta) (-2\kappa \eta; -2\kappa\eta)^2}{(-4\eta; -4\eta)(-2\eta+1/2; -2\eta)}.
\end{align*}
This finally implies that
\begin{align*}
P_{\mu, \kappa}(4\eta; -8\eta, \eta) &= e^{\frac{2\pi i \eta}{\kappa}(\mu + 2)^2 - 6\pi i \eta} \frac{\Delta_{\mu + 2, \kappa}(4\eta; -8\eta, \eta)}{\theta(2\eta; -8\eta)\theta(4\eta; -8\eta)\theta(6\eta; -8\eta)}\\
&= 2 \pi e^{12\pi i \eta - 2\pi i (\mu + 2)\eta} \frac{\Gamma(-6\eta; -2\kappa\eta, -8\eta)}{\Gamma(-2\eta; -2\kappa\eta, -8\eta)} \theta_0(2(\mu + 2)\eta; -2\kappa\eta) (-2\kappa \eta; -2\kappa\eta)^2\\
&\phantom{==} \frac{1}{(-8\eta; -8\eta)^3 \theta_0(2\eta; -8\eta)\theta_0(4\eta; -8\eta) \theta_0(6\eta; -8\eta)(-4\eta; -4\eta)(-2\eta+1/2; -2\eta)}.
\end{align*}
Notice now that 
\begin{align*}
(-8\eta; -8\eta)^3& \theta_0(2\eta; -8\eta)\theta_0(4\eta; -8\eta) \theta_0(6\eta; -8\eta)(-4\eta; -4\eta)(-2\eta+1/2; -2\eta)\\
&= (-8\eta; -8\eta)^3 (4\eta; -8\eta)(-12\eta; -8\eta) \theta_0(6\eta; -4\eta) (-4\eta; -4\eta)(-2\eta+1/2; -2\eta)\\
&= e^{16\pi i \eta} (1 - e^{8\pi i \eta}) (-8\eta; -8\eta) (-8\eta; -4\eta) (-4\eta; -4\eta)^2 (-2\eta; -4\eta)^2 (-2\eta+1/2; -2\eta)\\
&= - e^{24\pi i \eta} (-8\eta; -8\eta) (-4\eta; -4\eta)^2 (-2\eta; -2\eta).
\end{align*}
We conclude that 
\[
P_{\mu, \kappa}(4\eta; -8\eta, \eta) = -2 \pi e^{-12\pi i \eta - 2\pi i (\mu + 2)\eta} \frac{\Gamma(-6\eta; -2\kappa\eta, -8\eta)}{\Gamma(-2\eta; -2\kappa\eta, -8\eta)} \frac{\theta_0(2(\mu + 2)\eta; -2\kappa\eta) (-2\kappa \eta; -2\kappa\eta)^2}{(-8\eta; -8\eta) (-4\eta; -4\eta)^2 (-2\eta; -2\eta)}.\qedhere
\]
\end{proof}

\subsection{$SL(3, \ZZ)$ modular properties}

In this section we provide some motivation for Theorems \ref{thm:fv-val1}, \ref{thm:fv-val2}, and \ref{thm:ellmac-val} via the $SL(3, \ZZ)$ modular properties of Felder-Varchenko functions and the elliptic gamma function.  We were able to use this philosophy to conjecture and numerically verify their statements before finding their proofs via the elliptic hypergeometric integrals in Section \ref{sec:int-eval}.  In \cite[Theorem 4.1]{FV4}, modular transformation properties are given for hypergeometric theta functions.  When translated into modular properties for the elliptic Macdonald polynomials and specialized to $P_{0, 4}$, they become the following three term relations.
\begin{prop} \label{prop:ellmac-mod}
The function $P_{0, 4}(\lambda; \tau, \eta)$ satisfies the modular properties
\begin{equation} \label{eq:mod-prop1}
P_{0, 4}(\lambda; \tau, \eta) S^-(\tau, \eta) P_{0, 4}(\lambda; - 1/\tau, \eta/\tau)^{-1} = 4 \sqrt{2} \pi i\tau \exp\Big(\pi i \frac{4 + 216\eta^2 - 42 \eta(\tau - 1) + 3\tau + 4\tau^2}{12\tau}\Big).
\end{equation}
and
\begin{equation} \label{eq:mod-prop2}
P_{0, 4}(\lambda; \tau, \eta) S^+(\tau, \eta) P_{0, 4}(\lambda; -1/\tau, -\eta/\tau)^{-1} = -4 \sqrt{2}\pi i \tau \exp\Big(\pi i \frac{4 + 216 \eta^2 - 42 \eta(1 + \tau) - 3 \tau + 4\tau^2}{12\tau}\Big)
\end{equation}
for the quantities
\begin{align*}
S^-(\tau, \eta) &:= -2 \frac{\theta(1/2; \tau/8\eta)\theta'(0; \tau/8\eta)}{\theta(3/4; \tau/8\eta) \theta(1/4; \tau/8\eta)} u(1/2, 1/2, 1/8\eta, \tau/8\eta, -1/8)\\
S^+(\tau, \eta) &:= 2 \frac{\theta(1/2; -\tau/8\eta)\theta'(0; -\tau/8\eta)}{\theta(1/4; -\tau/8\eta) \theta(3/4; -\tau/8\eta)} u(1/2, -1/2, 1/8\eta, -\tau/8\eta, 1/8).
\end{align*}
\end{prop}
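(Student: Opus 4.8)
The plan is to obtain Proposition \ref{prop:ellmac-mod} as a specialization of the $SL(3,\ZZ)$ modular transformation law for hypergeometric theta functions established in \cite[Theorem 4.1]{FV4}. That result expresses $\wDelta_{\mu,\kappa}(\lambda;-1/\tau,\cdot)$, with the modular parameters $\tau$ and $-2\eta\kappa$ inverted appropriately, as the product of $\wDelta_{\mu,\kappa}(\lambda;\tau,\eta)$, an elliptic-gamma / Jacobi-theta prefactor governed by the cubic form $Q$ of Proposition \ref{prop:sl3-mod} together with the quadratic exponent of (\ref{eq:theta-mod}), and a ``fusion factor'' that is itself a Felder-Varchenko function $u$ with the modular parameters swapped. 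First I would record this transformation law for $\Delta_{\mu,\kappa}=\wDelta_{\mu,\kappa}(\lambda)-\wDelta_{\mu,\kappa}(-\lambda)$ using the integral form of Proposition \ref{prop:htf-prop}, and then push it through the definition $P_{\mu,\kappa}(\lambda;\tau,\eta)=e^{-\pi i\frac{4\eta+\tau}{2\kappa}(\mu+2)^2+\pi i 3\tau/4}\,\Delta_{\mu+2,\kappa}(\lambda;\tau,\eta)/\bigl(\theta(\lambda-2\eta;\tau)\theta(\lambda;\tau)\theta(\lambda+2\eta;\tau)\bigr)$, transforming the denominator triple product by three applications of (\ref{eq:theta-mod}). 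Since the $\lambda$-dependence on the two sides is the ratio of the same modular triple product, it cancels, and the identity reduces to one among $\lambda$-independent prefactors.

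Next I would specialize to $\mu=0$, $\kappa=4$, so that $\mu+2=2$ and the relevant object is $\Delta_{2,4}$. At these values the data entering the fusion factor collapse to the distinguished combination appearing in $S^{\mp}$: the modular parameters of the fusion $u$ become $1/(8\eta)$ and $\pm\tau/(8\eta)$, the images of $-2\eta\kappa=-8\eta$ and $\tau$ under the relevant $SL(3,\ZZ)$ element; its two position arguments both become $1/2$; and the deformed parameter $\eta$ becomes $\mp 1/8$, which requires the contour deformation noted in the remark preceding Theorem \ref{thm:fv-val1}. The $Q$-normalization factors $Q(2\eta j;-2\eta\kappa,\eta)$ carried by the definition of $\wDelta$, after the modular transformation and evaluation, produce exactly the ratios $\theta(1/2;\pm\tau/8\eta)\theta'(0;\pm\tau/8\eta)/\bigl(\theta(3/4;\pm\tau/8\eta)\theta(1/4;\pm\tau/8\eta)\bigr)$ multiplying $u$ inside $S^{\mp}$. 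The two sign choices correspond to the two elements $(\tau,\eta)\mapsto(-1/\tau,\eta/\tau)$ and $(\tau,\eta)\mapsto(-1/\tau,-\eta/\tau)$, yielding (\ref{eq:mod-prop1}) and (\ref{eq:mod-prop2}) respectively.

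It then remains only to pin down the scalar $\pm 4\sqrt2\,\pi i\tau\,\exp(\cdots)$. This factor is assembled from the cubic form $Q(z;\tau,-8\eta)$ of Proposition \ref{prop:sl3-mod} evaluated at the shifted arguments entering $\Omega_{2\eta}$ in $\wI_{\mu,\kappa}$, the quadratic exponents $e^{\pi i z^2/\tau}$ and the square roots $\sqrt{-i\tau}$ from the applications of (\ref{eq:theta-mod}) to the numerator and denominator theta functions (including the one carried by $\theta_{\mu,\kappa}$), and the explicit prefactor $e^{-\pi i\frac{4\eta+\tau}{2\kappa}(\mu+2)^2+\pi i 3\tau/4}$ attached to $P_{\mu,\kappa}$ on each side. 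I would substitute $\mu=0$, $\kappa=4$, collect, and verify that the accumulated cubic and quadratic terms telescope to $\pi i\frac{4+216\eta^2-42\eta(\tau-1)+3\tau+4\tau^2}{12\tau}$ and its sign-flipped companion, with $4\sqrt2\,\pi i\tau$ emerging from the surviving power of $\sqrt{-i\tau}$ and the factors of $2\pi i$ inside $\theta'(0;\cdot)$. The main obstacle is precisely this last bookkeeping step: there is no conceptual difficulty, but one must track a large number of exponential-quadratic and exponential-cubic contributions and confirm their massive cancellation, and keeping the normalization of $\theta'(0;\cdot)$, the branch of $\sqrt{-i\tau}$, and the sign conventions in $Q$ mutually consistent is where errors are most likely to arise.
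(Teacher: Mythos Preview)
Your approach is exactly what the paper does: it states Proposition \ref{prop:ellmac-mod} as the specialization to $\mu=0$, $\kappa=4$ of the modular transformation law \cite[Theorem 4.1]{FV4} for hypergeometric theta functions, pushed through the definition of $P_{\mu,\kappa}$. The paper in fact gives no further argument beyond that one-line attribution, so your proposal is already more detailed than the paper's treatment; the bookkeeping you flag as the main obstacle is simply left implicit there.
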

We guessed the statements of Theorems \ref{thm:fv-val1}, \ref{thm:fv-val2}, and \ref{thm:ellmac-val} by creating candidate expressions which are products of elliptic gamma functions and theta functions with the correct modular parameters, degenerate to known trigonometric and classical limits of these expressions, and satisfy the modular properties (\ref{eq:mod-prop1}) and (\ref{eq:mod-prop2}) as a result of the modular transformations (\ref{eq:theta-mod}) and (\ref{eq:ellgam-mod}).  Before finding proofs for Theorems \ref{thm:fv-val1}, \ref{thm:fv-val2}, and \ref{thm:ellmac-val}, we were then able to verify the conjectured expressions for generic special values of $\tau$ and $\eta$ by numerical integration.

\section{Affine Macdonald conjectures} \label{sec:aff-mac}

In this section, we relate our results on special values of elliptic Macdonald polynomials to special cases of the constant term and evaluation conjectures for the affine Macdonald polynomials defined by Etingof-Kirillov~Jr. in \cite{EK3}.  We first introduce the affine Macdonald polynomials, relate them to elliptic Macdonald polynomials using the results of \cite{Sun:qafftr}, and then state and prove the relevant conjectures.  In this section, we will only use multiplicative notation.

\subsection{The quantum affine algebra $U_q(\asl_{n})$} 

Let $\alpha_i$, $i = 1, \ldots, n - 1$ be the simple roots for $\sl_{n}$, $\theta$ the highest root, $\rho = \frac{1}{2} \sum_{\alpha > 0} \alpha$, and note that the dual Coxeter number is $\ch = 1 + (\theta, \rho) = n$. Let $A = (a_{ij})_{i, j = 0}^{n-1}$ be the extended Cartan matrix of $\asl_n$.  Let the Cartan and dual Cartan algebras be 
\[
\whh = \hh \oplus \CC c \oplus \CC d \text{ and } \whh^* = \hh^* \oplus \CC \Lambda_0 \oplus \CC \delta,
\]
with $\Lambda_0 = c^*$ and $\delta = d^*$. Take $\alpha_0 := \delta - \theta \in \whh^*$.  The algebra $\wsl_n$ admits a non-degenerate invariant form $(-,-)$ whose restriction to $\whh$ satisfies
\[
(d, d) = 0 \qquad (c, d) = 1 \qquad (\alpha_i, \alpha_i) = 2 \text{ for $i > 0$}
\]
and agrees with the standard non-degenerate form on $\hh$. Fix an orthonormal basis $\{x_i\}$ of $\hh$ under $(, )$. Define $\wrho := \rho + n \Lambda_0$.

Let $q$ be a non-zero complex number with $|q| < 1$.  The quantum affine algebra $U_q(\asl_{n})$ is the Hopf algebra generated as an algebra by $e_i, f_i, q^{\pm h_i}$ for $0 \leq i \leq r$ with relations
\begin{align*}
[q^{h_i}, q^{h_j}] &= 0 \qquad q^{h_i} e_j q^{-h_j} = q^{(h_i, \alpha_j)} e_j \qquad q^{h_i} f_j q^{-h_j} = q^{-(h_i, \alpha_j)} f_j \qquad [e_i, f_j] = \delta_{ij} \frac{q^{h_i} - q^{-h_i}}{q - q^{-1}} \\
\sum_{k = 0}^{1 - a_{ij}}& (-1)^k \binom{1 - a_{ij}}{k}_{q} e_i^{1 - a_{ij} - k} e_j e_i^k = 0 \qquad \sum_{k = 0}^{1 - a_{ij}} (-1)^k \binom{1 - a_{ij}}{k}_{q} f_i^{1 - a_{ij} - k} f_j f_i^k = 0,
\end{align*}
where we use the notations $[n] = \frac{q^n - q^{-n}}{q - q^{-1}}$, $[n]! = [n] \cdots [1]$, and $\binom{a}{b}_q = \frac{[a]_q!}{[b]_q! [a - b]_q!}$.  The coproduct of $U_q(\asl_{n})$ is
\begin{align*}
\Delta(e_i) &= e_i \otimes 1 + q^{h_i} \otimes e_i \qquad \Delta(f_i) = f_i \otimes q^{-h_i} + 1 \otimes f_i \qquad 
\Delta(q^{h_i}) = q^{h_i} \otimes q^{h_i},
\end{align*}
the antipode is
\[
S(e_i) = -  q^{-h_i}e_i \qquad S(f_i) = - f_iq^{h_i} \qquad S(q^{h_i}) = q^{-h_i},
\]
and the counit is 
\[
\eps(e_i) = \eps(f_i) = 0 \qquad \eps(q^{h_i}) = 1.
\]
Let $U_q(\wsl_n)$ be the extension of $U_q(\asl_n)$ by a generator $q^d$ which commutes with $q^{h_i}$ and interacts with $e_i$ and $f_i$ via
\[
[q^d, e_i] = [q^d, f_i] = 0 \text{ for $i \neq 0$} \qquad q^de_0 q^{-d} = q e_0 \qquad q^df_0q^{-d} = q^{-1} f_0
\]
and on which the coproduct, antipode, and counit are
\[
\Delta(q^d) = q^d \otimes q^d \qquad S(q^d) = q^{-d} \qquad \eps(q^d) = 1.
\]

\begin{remark}
This coproduct is the opposite of the one in \cite{FR, ESV} but agrees with those in the bosonizations of \cite{KQS, Kon, Mat}.  Our motivation for using it is to apply results from \cite{Sun:qafftr}, which uses the bosonization and coproduct of \cite{Mat}.  For a presentation of results parallel to those of \cite{ESV} in the coproduct of this paper, we refer the reader to \cite{Sun:esv-mod}.
\end{remark}

\subsection{Affine Macdonald polynomials}

Fix an integer $\mk \geq 0$.  For a dominant integral weight $\mu + k \Lambda_0$, let $L_{\mu + k\Lambda_0}$ denote the irreducible integrable module for $U_q(\asl_{n})$ with highest weight $\mu + k\Lambda_0$ and highest weight vector $v_{\mu + k \Lambda_0}$.  Let $V$ denote the dimension $n$ irreducible fundamental representation of $U_q(\sl_{n})$.  For $v \in \Sym^{n (\mk - 1)}V[0]$, we have a unique intertwiner 
\[
\Upsilon_{\mu, k, \mk}^v(z): L_{\mu + k \Lambda_0 + (\mk - 1)\wrho} \to L_{\mu + k \Lambda_0 + (\mk - 1)\wrho} \hotimes \Sym^{n(\mk - 1)}V(z)
\]
such that $\Upsilon^v_{\mu, k, \mk}(z) v_{\mu + k \Lambda_0 + (\mk - 1)\wrho} = v_{\mu + k \Lambda_0 + (\mk - 1)\wrho} \otimes v + (\text{l.o.t.})$, where $(\text{l.o.t.})$ denotes terms of lower weight in the first tensor factor.

Fix a choice of $w_0 \in \Sym^{n(\mk - 1)}V [0]$ and make the identification $\Sym^{n(\mk - 1)}V [0] \simeq \CC \cdot w_0$.  Define the trace function
\[
\chi_{\mu, k, \mk}(q, \lambda, \omega) = \Tr|_{L_{\mu + k \Lambda_0 + (\mk - 1)\wrho}}\Big(\Upsilon^{w_0}_{\mu, k, \mk}(z) q^{2\lambda + 2\omega d}\Big),
\]
where the trace is independent of $z$.  In \cite{EK3}, the affine Macdonald polynomial for $\asl_n$ at $t = q^{\mk}$ was defined to be
\[
J_{\mu, k, \mk}(q, \lambda, \omega) := \frac{\chi_{\mu, k, \mk}(q, \lambda, \omega)}{\chi_{0, 0, \mk}(q, \lambda, \omega)}.
\]
It is a symmetric Laurent series with highest term $q^{(\mu + k \Lambda_0, 2\lambda + 2\omega d)}$.  The denominator $\chi_{0, 0, \mk}(q, \lambda, \omega)$ was determined in \cite{EK3} up to an unknown multiplicative factor as follows.

\begin{prop}[{\cite[Theorem 11.1]{EK3}}] \label{prop:ek-const}
There is a function $f_{n, \mk}(q, q^{-2\omega})$ with unit constant term whose formal power series expansion in $q^{-2\omega}$ has rational function coefficients in $q$ such that 
\[
\chi_{0, 0, \mk}(q, \lambda, \omega) = f_{n, \mk}(q, q^{-2\omega}) q^{2(\mk - 1)(\rho, \lambda)} \prod_{i = 1}^{\mk - 1} \prod_{\alpha > 0} (1 - q^{-2(\alpha, \lambda + \omega d) + 2i}).
\]
\end{prop}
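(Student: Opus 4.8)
The plan is to analyze $\chi_{0,0,\mk}(q,\lambda,\omega)=\Tr|_{L_{(\mk-1)\wrho}}\big(\Upsilon^{w_0}_{0,0,\mk}(z)\,q^{2\lambda+2\omega d}\big)$ as a function of $\lambda$ and to pin that dependence down completely: one shows that $\chi_{0,0,\mk}$ is (i) quasi-invariant under the affine Weyl group $\widehat W$ acting on $\lambda+\omega d$ with a computable automorphy cocycle, and (ii) divisible by $\prod_{i=1}^{\mk-1}\prod_{\alpha>0}(1-q^{-2(\alpha,\lambda+\omega d)+2i})$ (positive affine roots counted with multiplicity). Comparing leading terms and matching the cocycle of this product then forces $\chi_{0,0,\mk}=f_{n,\mk}(q,q^{-2\omega})\,q^{2(\mk-1)(\rho,\lambda)}\prod_{i=1}^{\mk-1}\prod_{\alpha>0}(1-q^{-2(\alpha,\lambda+\omega d)+2i})$ for a $\lambda$-independent factor $f_{n,\mk}$.

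I would assemble the argument from three ingredients. First, the analytic structure: grading $L_{(\mk-1)\wrho}$ by the $d$-eigenvalue gives finite-dimensional graded pieces, so $\chi_{0,0,\mk}$ is a power series in $q^{-2\omega}$ whose coefficients are Laurent polynomials in $q^{2\lambda}$ with coefficients rational in $q$, convergent for $|q^{-2\omega}|$ small; the highest-weight vector contributes $q^{2(\mk-1)(\rho,\lambda)}$ (the $q^{2\omega d}$-part being trivial since $\wrho(d)=0$), which yields the stated prefactor and shows that $f_{n,\mk}$ has constant term $1$ in $q^{-2\omega}$ with rational-in-$q$ coefficients. Second, the $\widehat W$-quasi-invariance of trace functions, standard from the intertwiner/$R$-matrix formalism using that $w_0$ spans the zero-weight space $\Sym^{n(\mk-1)}V[0]$, together with the elementary check that the candidate product carries the same cocycle (each simple affine reflection permutes the positive affine roots while flipping one, and the monomial prefactor absorbs the leftover). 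Third --- the crux --- the divisibility: $\chi_{0,0,\mk}$ vanishes, with multiplicity $\mult(\alpha)$, on each hyperplane $q^{2(\alpha,\lambda+\omega d)}=q^{2i}$ for $\alpha>0$ and $1\le i\le\mk-1$. This I would obtain from a BGG-type resolution of $L_{(\mk-1)\wrho}$ by (quantum, affine) Verma modules: the intertwiner traces over Verma modules are elementary geometric-type products, the resonance vanishing is transparent from the reflected weights, and the alternating sum over $\widehat W$ assembles exactly the claimed factors --- the last step being a quantum/affine avatar of Macdonald's affine root-system (Weyl--Kac) denominator identity. Equivalently, one may realize $\chi_{0,0,\mk}$ as the ground state of the commuting family of affine Macdonald--Ruijsenaars difference operators of \cite{EK3} attached to $t=q^{\mk}$ and invoke a triangularity/uniqueness statement identifying that ground state with the explicit product up to a $\lambda$-free normalization.

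The main obstacle I anticipate is making the vanishing statement and the alternating sum rigorous in the affine setting: there are infinitely many resonance hyperplanes accumulating as the $d$-grading grows, and the BGG resolution for $U_q(\asl_n)$ must be handled with care (respectively, one needs the triangularity of the affine Macdonald--Ruijsenaars operator together with convergence of its ground-state expansion). It is exactly the delicacy of this control that leaves $f_{n,\mk}$ undetermined: that factor encodes the discrepancy between the integrable module $L_{(\mk-1)\wrho}$ and its Verma cover, and evaluating it --- in the first nontrivial case $n=\mk=2$ --- is one of the goals of the body of this paper.
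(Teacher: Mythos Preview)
The paper does not prove this proposition at all: it is quoted verbatim from \cite[Theorem 11.1]{EK3} and used as a black box, so there is no ``paper's own proof'' to compare your sketch against. Your outline --- $\widehat W$-quasi-invariance plus vanishing on the resonance hyperplanes plus leading-term matching --- is indeed the standard route and is essentially how the result is obtained in \cite{EK3}; in that sense your strategy is the right one, just not something the present paper undertakes.

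One small point of care: the product in the stated proposition runs over positive affine roots \emph{without} multiplicity, whereas your divisibility argument naturally produces the multiplicity-weighted product (since imaginary roots have multiplicity $n-1$). This is harmless --- the imaginary-root factors depend only on $q^{-2\omega}$, not on $\lambda$, so they can be freely shuttled between the product and $f_{n,\mk}$ --- but it is exactly the bookkeeping behind the remark following Conjecture~\ref{conj:f-val-conj}, and worth keeping straight. Your closing observation that $f_{n,\mk}$ encodes the discrepancy left over by this argument, and that pinning it down for $n=\mk=2$ is a goal of the paper, is precisely the point.
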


\begin{remark}
Our function $f_{n, k}(q, q^{-2\omega})$ corresponds to the function $f(p, q)$ in \cite[Theorem 11.1]{EK3}.
\end{remark}

\begin{remark}
To avoid conflict with the use of $k$ for level, we use $\mk$ to denote the parameter of the Macdonald polynomial.  This corresponds to the variable $k$ in \cite{EK3} and $m$ in \cite{FV4}.
\end{remark}

\subsection{Modified affine Macdonald conjectures}

In \cite{EK3}, denominator and evaluation conjectures were stated for affine Macdonald polynomials by analogy with the finite-type setting.  In this section, we make refinements and corrections to these conjectures based on the behavior of the $U_q(\asl_2)$ case determined from computer computations in Magma and Theorems \ref{thm:f-val} and \ref{thm:eval-conj} in Section \ref{sec:aff-ell}.  We begin with a conjecture which refines the result of \cite[Theorem 11.1]{EK3} by specifying a precise form for the affine Macdonald denominator.

\begin{conj}[Affine denominator conjecture] \label{conj:f-val-conj}
The affine denominator is given by 
\[
\chi_{0, 0, \mk}(q, \lambda, \omega) = q^{2(\mk - 1)(\rho, \lambda)} \frac{\prod_{i = 1}^{\mk - 1} (q^{-2\omega + 2i}; q^{-2\omega})}{\prod_{i = 1}^{\mk - 1} (q^{-2\omega + 2ni}; q^{-2\omega})}  \prod_{i = 1}^{\mk - 1} \prod_{\alpha > 0} (1 - q^{-2(\alpha, \lambda +\omega d) + 2i})^{\mult(\alpha)}.
\]
\end{conj}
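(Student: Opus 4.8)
By Proposition \ref{prop:ek-const}, the whole content of the conjecture is the determination of the normalizing factor $f_{n, \mk}(q, q^{-2\omega})$: comparing the two product formulas, Conjecture \ref{conj:f-val-conj} is equivalent to the closed form
\[
f_{n, \mk}(q, q^{-2\omega}) = \frac{\prod_{i = 1}^{\mk - 1} (q^{-2\omega + 2i}; q^{-2\omega})^{n - 1}}{\prod_{i = 1}^{\mk - 1} (q^{-2\omega + 2ni}; q^{-2\omega})}.
\]
I would first record that this is self-consistent: the discrepancy between the plain product $\prod_{\alpha > 0}(1 - q^{-2(\alpha, \lambda + \omega d) + 2i})$ of Proposition \ref{prop:ek-const} and the multiplicity-weighted product of the conjecture is carried entirely by the imaginary roots $m\delta$, which have multiplicity $n - 1$ and satisfy $(m\delta, \lambda + \omega d) = m\omega$; since this does not involve $\lambda$, the surplus is a function of $q$ and $q^{-2\omega}$ alone, exactly as Proposition \ref{prop:ek-const} requires. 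So the target is the displayed formula for $f_{n,\mk}$.

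\textbf{The base case $n = 2$, $\mk = 2$.} Here the assertion is $f_{2, 2}(q, q^{-2\omega}) = (q^{-2\omega + 2}; q^{-2\omega})/(q^{-2\omega + 4}; q^{-2\omega})$, which is Theorem \ref{thm:f-val}. I would prove it along the chain assembled in the paper: specialize Proposition \ref{prop:fv-conj} to $\mu = k = 0$, where $J_{0, 0, 2} \equiv 1$ by definition of the affine Macdonald polynomial, so that the relation pins down $f_{2, 2}(q, q^{-2\omega})$ in terms of the special value $P_{0, 4}(2\eta\lambda; -2\eta\omega, \eta)$; evaluate that value by Theorem \ref{thm:ellmac-val}; and substitute the resulting $f_{2, 2}$ back into Proposition \ref{prop:ek-const}. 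The analytic core is Theorem \ref{thm:ellmac-val}, which reduces to the theta hypergeometric integral evaluation of Theorem \ref{thm:eval3}, itself obtained by rearranging the integrand (Lemmas \ref{lem:int-rearrange}--\ref{lem:full-sym}) into instances of Spiridonov's elliptic beta integral (Theorem \ref{thm:ell-beta}) with well-chosen modular parameters. Numerical and Magma computations would serve as a check on the closed form.

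\textbf{Beyond the base case.} For general $(n, \mk)$ I would pursue two complementary routes. First, one can push the Felder--Varchenko/elliptic-beta method: larger $\mk$ corresponds to higher symmetric powers $\Sym^{n(\mk - 1)}V$ and hence to higher-dimensional theta hypergeometric integrals, to be evaluated by iterated applications of the elliptic beta integral, while larger $n$ requires the $U_q(\asl_n)$ free-field realizations and trace computations of the type developed in \cite{Sun:qafftr}. Second, and more robustly, one can develop the quantum-affine-algebra approach in the spirit of \cite{EK-mac}: realize $\chi_{0, 0, \mk}$ through affine Macdonald--Ruijsenaars difference operators and a Shapovalov/norm computation on $L_{(\mk - 1)\wrho}$, and extract the product formula from the structure of those operators together with its degeneration to the finite-type Macdonald denominator as $\omega \to i\infty$. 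Computer computation should be used throughout to fix the precise shape of $f_{n, \mk}$ for the first several $(n, \mk)$ and to sanity-check intermediate identities.

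\textbf{The main obstacle.} Everything except the exact form of $f_{n, \mk}$ is, in principle, a (hard) affine generalization of Etingof--Kirillov's finite-type arguments; the genuinely new phenomenon is the $f$-factor, in particular the shifted denominator $\prod_i (q^{-2\omega + 2ni}; q^{-2\omega})$ against the unshifted numerator $\prod_i (q^{-2\omega + 2i}; q^{-2\omega})^{n-1}$. I expect the crux of a conceptual proof to be recognizing this factor as the Kac--Moody counterpart of the imaginary-root contribution to the Gindikin--Karpelevich formula for $p$-adic loop groups of \cite{BFK12, BK13, BGKP14, BKP16}, and matching the affine trace computation against that structure rather than against a naive affinization of the finite-type denominator formula; making this match precise is where the difficulty lies.
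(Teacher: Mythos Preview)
The statement is a \emph{conjecture}, and the paper does not prove it in general: it establishes only the base case $n = 2$, $\mk = 2$ (Theorem \ref{thm:f-val}) and records Magma verification up to first order in $q^{-2\omega}$ for a handful of further $(n,\mk)$. Your proposal correctly reflects this status, and your route to the base case---set $\mu = k = 0$ in Proposition \ref{prop:fv-conj} so that $J_{0,0,2}\equiv 1$ forces an expression for $f_{2,2}$, evaluate $P_{0,4}$ via Theorem \ref{thm:ellmac-val} (hence Theorem \ref{thm:eval3} and the elliptic beta integral), and simplify---is exactly the paper's argument.

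Your discussion beyond the base case is, as it must be, programmatic rather than a proof. It is broadly aligned with what the paper itself says about future directions (a quantum-affine-algebra approach in the spirit of \cite{EK-mac}, and the link to the affine Gindikin--Karpelevich correction factor), but none of it constitutes an argument, and the paper offers nothing further to compare it against. So: correct on what is actually proved, same approach as the paper for the proved case, and appropriately flagged as open otherwise.
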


\begin{remark}
Conjecture \ref{conj:f-val-conj} is equivalent to the fact that the correction term $f_{n, \mk}(q, q^{-2\omega})$ is given by
\[
f_{n, \mk}(q, q^{-2\omega}) = \prod_{i = 1}^{\mk - 1} \prod_{\substack{\alpha > 0 \\ \alpha \text{ imaginary}}} \frac{(1 - q^{- 2(\alpha, \lambda + \omega d) + 2i})^{n-1}}{1 - q^{-2(\alpha, \lambda + \omega d) + 2ni}} = \frac{\prod_{i = 1}^{\mk - 1} (q^{-2\omega + 2i}; q^{-2\omega})^{n-1}}{\prod_{i = 1}^{\mk - 1} (q^{-2\omega + 2ni}; q^{-2\omega})}.
\]
However, recalling that the multiplicity of each imaginary root for $\asl_n$ is $n - 1$, it is more natural to write the conjectured expression as 
\[
\Delta_\mk(q, q^{-2\omega}) \cdot q^{2(\mk - 1) (\rho, \lambda)} \prod_{i = 1}^{k - 1} \prod_{\alpha > 0} (1 - q^{-2(\alpha, \lambda + \omega d) + 2i})^{\mult(\alpha)},
\]
where $\Delta_\mk(q, q^{-2\omega})$ is a correction factor given by
\begin{equation} \label{eq:delta-def}
\Delta_\mk(q, q^{-2\omega}) := \frac{\prod_{i = 1}^{\mk - 1} (q^{-2\omega + 2i}; q^{-2\omega})}{\prod_{i = 1}^{\mk - 1} (q^{-2\omega + 2n i}; q^{-2\omega})}.
\end{equation}
and
\begin{equation} \label{eq:aff-analogue}
q^{2(\mk - 1) (\rho, \lambda)} \prod_{i = 1}^{k - 1} \prod_{\alpha > 0} (1 - q^{-2(\alpha, \lambda + \omega d) + 2i})^{\mult(\alpha)}
\end{equation}
is an affine analogue of the finite-type Macdonald denominator.  Note that the presence of the multiplicity in the exponents of (\ref{eq:aff-analogue}) is motivated by their appearance in the Weyl-Kac character formula.
\end{remark}

\begin{remark}
For $n = 2$ and $\mk = 2$, Conjecture \ref{conj:f-val-conj} is established in Theorem \ref{thm:f-val}.  For $n = 2$ and $3 \leq \mk \leq 15$ and $n = 3$ and $2 \leq \mk \leq 3$, we verified in the computer algebra system Magma (see \cite{Magma}) that Conjecture \ref{conj:f-val-conj} holds up to first order in $q^{-2\omega}$.  Our code and instructions for reproducing our computations are provided at \url{github.com/yi-sun/aff-mac}.
\end{remark}

For the affine evaluation conjecture, we must modify \cite[Conjecture 11.3]{EK3}, which in our notations states that $J_{\mu, k, \mk}(q, \mk \rho, \mk n)$ is given by 
\[
J^\text{conj}_{\mu, k, \mk}(q, \mk\rho, \mk n) := q^{-2(\mu, \mk \rho)} \prod_{\alpha > 0} \prod_{i = 0}^{\mk - 1} \frac{1 - q^{2(\alpha, \mu + k \Lambda_0 + \mk \wrho) + 2i}}{1 - q^{2(\alpha, \mk \wrho) + 2i}}.
\]
Note that the presence of $q^{2\mk n d}$ in the expression for
\[
\chi_{\mu, k, \mk}(q, \mk \rho, \mk n) = \Tr|_{L_{\mu + k \Lambda_0 + (\mk - 1)\wrho}}\Big(\Upsilon^{w_0}_{\mu, k, \mk}(z) q^{2\mk\rho + 2\mk n d}\Big),
\]
implies that the trace can only converge for $|q| > 1$.  On the other hand, the conjectural expression $J^\text{conj}_{\mu, k, \mk}(q, \mk\rho, \mk n)$ of \cite[Conjecture 11.3]{EK3} does not converge for $|q| > 1$, meaning that we must replace it by an expression which does.  We propose in the following Conjecture \ref{conj:aff-eval} a modification which converges for $|q| > 1$ and incorporates a multiplicative correction of form similar to that of Conjecture \ref{conj:f-val-conj}.

\begin{conj}[Affine evaluation conjecture] \label{conj:aff-eval}
For $|q| > 1$, we have that
\begin{equation} \label{eq:aff-eval-eq}
J_{\mu, k, \mk}(q, \mk \rho, \mk n) = q^{2(\mu, \mk \rho)} \frac{\prod_{i = 1}^{\mk - 1} (q^{-2i}; q^{-2(k + \mk n)})}{\prod_{i = 1}^{\mk - 1} (q^{-2ni}; q^{-2(k + \mk n)})} \frac{\prod_{i = 1}^{\mk - 1} (q^{-2ni}; q^{-2\mk n})}{\prod_{i = 1}^{\mk - 1} (q^{-2i}; q^{-2\mk n})}  \prod_{\alpha > 0} \prod_{i = 0}^{\mk - 1} \frac{(1 - q^{-2(\alpha, \mu + k \Lambda_0 + \mk \wrho) - 2i})^{\mult(\alpha)}}{(1 - q^{-2(\alpha, \mk \wrho) - 2i})^{\mult(\alpha)}},
\end{equation}
where
\[
\frac{\prod_{i = 1}^{\mk - 1} (q^{-2i}; q^{-2(k + \mk n)})}{\prod_{i = 1}^{\mk - 1} (q^{-2ni}; q^{-2(k + \mk n)})} = \prod_{i = 1}^{\mk - 1} \prod_{\substack{\alpha > 0 \\ \alpha \text{ imaginary}}} \frac{1 - q^{-2(\alpha, \mu + k \Lambda_0 + \mk \wrho) - 2i}}{1 - q^{-2(\alpha, \mu + k \Lambda_0 + \mk \wrho) - 2ni}}
\]
and
\[
 \frac{\prod_{i = 1}^{\mk - 1} (q^{-2ni}; q^{-2\mk n})}{\prod_{i = 1}^{\mk - 1} (q^{-2i}; q^{-2\mk n})} = \prod_{i = 1}^{\mk - 1} \prod_{\substack{\alpha > 0 \\ \alpha \text{ imaginary}}} \frac{1 - q^{-2(\alpha, \mk \wrho) - 2ni}}{1 - q^{-2(\alpha, \mk \wrho) - 2i}}.
\]
\end{conj}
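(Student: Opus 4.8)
\emph{Proof strategy.} In full generality this conjecture is open; the plan is to prove the first case $n = \mk = 2$ (Theorem \ref{thm:eval-conj}), from which the conjecture is extrapolated together with the Magma computations, and to indicate how one might attack the general case. For the first case I would specialize Proposition \ref{prop:fv-conj} (= \cite[Theorem 9.9]{Sun:qafftr}) at $\lambda = \mk\rho = 2$ and $\omega = \mk n = 4$. Since $|q^{-8}| < |q^{-6}|$ for $|q| > 1$, these values lie in the convergence range of that proposition (this is the origin of the hypothesis $|q| > 1$), and $J_{\mu, k, 2}(q, 2, 4)$ becomes an explicit product of $q$-Pochhammer factors --- involving the normalizing function $f_{2,2}$ and $\wk = k + 4 = k + \mk n$ --- times the elliptic Macdonald polynomial $P_{\mu, \wk}$ evaluated at $(4\eta; -8\eta, \eta)$. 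That is exactly the special point computed in Theorem \ref{thm:ellmac-eval}. Substituting the closed form for $P_{\mu, \kappa}(4\eta; -8\eta, \eta)$ and carrying out the cancellations of elliptic gamma and theta factors (translating between additive and multiplicative notation as in Section \ref{sec:int-eval}) should produce the right-hand side of (\ref{eq:aff-eval-eq}) up to the value of $f_{2,2}$.

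The remaining input is the denominator $\chi_{0,0,2}$: by Theorem \ref{thm:f-val} --- established in the same way, from Theorem \ref{thm:ellmac-val} via a $\lambda$-specialization of Proposition \ref{prop:fv-conj} --- the function $f_{2,2}(q, q^{-2\omega})$ is a ratio of Pochhammer symbols, and dividing $\chi_{\mu, k, 2}$ by $\chi_{0,0,2}$ is precisely what produces the imaginary-root correction prefactors $\frac{(q^{-2}; q^{-2\wk})}{(q^{-4}; q^{-2\wk})}$ and (with $\mk n = 4$) $\frac{(q^{-4}; q^{-4})}{(q^{-2}; q^{-4})}$ in (\ref{eq:aff-eval-eq}). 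The subtlety here is that $q^{-2\mu}$ must be small for Proposition \ref{prop:fv-conj}, so one is proving an identity of rational/Pochhammer expressions on an open set of $\mu$ and must argue it continues to all dominant $\mu$; since both sides are determined by finitely many coefficients this should be routine.

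For general $n$ and $\mk$ no analogue of Proposition \ref{prop:fv-conj} is yet known, so this route first requires extending \cite{Sun:qafftr} to $U_q(\asl_n)$ and to $\Sym^{n(\mk-1)}V$, realizing $J_{\mu, k, \mk}(q, \mk\rho, \mk n)$ as a special value of an $(\mk-1)$-dimensional theta hypergeometric integral. One would then symmetrize the integrand as in Lemmas \ref{lem:int-rearrange}--\ref{lem:full-sym} and evaluate the symmetrized integral by an elliptic Selberg integral of van Diejen--Spiridonov type (the higher-rank generalization of the elliptic beta integral of \cite{Spi00}); its factorized value should reproduce the product over positive real roots in (\ref{eq:aff-eval-eq}), with the imaginary-root prefactors coming from $\Delta_\mk(q, q^{-2\omega})$ of (\ref{eq:delta-def}). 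An alternative is the quantum-group approach announced in the introduction in the spirit of \cite{EK-mac}: derive affine Macdonald difference equations characterizing $J_{\mu, k, \mk}$, establish an affine Macdonald duality exchanging $\mu$ and $\lambda$, reduce the evaluation at $\lambda = \mk\rho$ to the known top-weight normalization, and pin down the constant via Conjecture \ref{conj:f-val-conj}. That the exponents in the product over positive roots carry the multiplicities $\mult(\alpha)$, matching the Weyl-Kac denominator, is evidence that either approach terminates in exactly this form.

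The hard part in the general case is the affine duality (or, equivalently, the precise higher-dimensional elliptic Selberg evaluation needed): the traces are over the infinite-dimensional integrable modules $L_{\mu + k\Lambda_0 + (\mk-1)\wrho}$, and a clean affine analogue of Macdonald's $P_\mu(\lambda) \leftrightarrow P_\lambda(\mu)$ symmetry --- the device that in finite type reduces the evaluation conjecture to the value $J(\rho)$ --- is not currently available. Even for $n = 2$ and general $\mk$, the remaining obstacle is to show that the $(\mk-1)$-fold elliptic Selberg integral factorizes into the expected product and to identify its imaginary-root prefactor with $\Delta_\mk$.
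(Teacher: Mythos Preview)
Your proposal is correct and matches the paper's approach: the paper does not prove Conjecture~\ref{conj:aff-eval} in general but establishes only the case $n=\mk=2$ (Theorem~\ref{thm:eval-conj}) by exactly the route you describe---specializing Proposition~\ref{prop:fv-conj} at $(\lambda,\omega)=(2,4)$, inserting the value of $f_{2,2}$ from Theorem~\ref{thm:f-val}, substituting Theorem~\ref{thm:ellmac-eval} for $P_{\mu,\wk}(4\eta;-8\eta,\eta)$, and simplifying the resulting products. One small slip: the second imaginary-root prefactor should have modular parameter $q^{-2\mk n}=q^{-8}$, i.e.\ $\frac{(q^{-4};q^{-8})}{(q^{-2};q^{-8})}$, not $\frac{(q^{-4};q^{-4})}{(q^{-2};q^{-4})}$; and your remark about needing analytic continuation in $\mu$ is a valid point that the paper's proof of Theorem~\ref{thm:eval-conj} leaves implicit.
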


\begin{remark}
For $n = 2$ and $\mk = 2$, Conjecture \ref{conj:aff-eval} is established in Theorem \ref{thm:eval-conj}.
\end{remark}

\begin{remark}
The conjectured expression (\ref{eq:aff-eval-eq}) in Conjecture \ref{conj:aff-eval} differs from $J^\text{conj}_{\mu, k, \mk}(q, \mk \rho, \mk n)$ in two ways.  First, the ordinary Macdonald evaluation formula of \cite[Corollary 4.4]{EK-mac} may be rewritten as
\[
P_\mu(q^{2\mk \rho}; q^2, q^{2\mk}) = q^{2(\mu, \mk \rho)} \prod_{\alpha > 0} \prod_{i = 0}^{\mk - 1} \frac{1 - q^{-2(\alpha, \mu + \mk \rho) - 2i}}{1 - q^{-2(\alpha, \mk \rho) - 2i}},
\]
where $P_\mu(x; q, t)$ denotes the ordinary Macdonald polynomial.  One multiplicative factor is given by taking the affine analogue of this form, which has better convergence properties for $|q| > 1$ in the affine setting.  Second, the multiplicative correction factor 
\[
\frac{\prod_{i = 1}^{\mk - 1} (q^{-2i}; q^{-2(k + \mk n)})}{\prod_{i = 1}^{\mk - 1} (q^{-2ni}; q^{-2(k + \mk n)})} \frac{\prod_{i = 1}^{\mk - 1} (q^{-2ni}; q^{-2\mk n})}{\prod_{i = 1}^{\mk - 1} (q^{-2i}; q^{-2\mk n})}
\]
has been added in analogy with the presence of $f_{n, \mk}(q, q^{-2\omega})$.
\end{remark}

\subsection{Proof of affine Macdonald conjectures for $n = 2$ and $\mk = 2$} \label{sec:aff-ell}

In the remainder of this section, we resolve Conjectures \ref{conj:f-val-conj} and \ref{conj:aff-eval} for $n = 2$ and $\mk = 2$.  This partially answers two questions from \cite{EK3}.  Our approach uses a connection between affine Macdonald polynomials for $n = 2$ and $\mk = 2$ and elliptic Macdonald polynomials conjectured to exist by Felder-Varchenko in \cite{FV4} and established in a precise form by Sun in \cite{Sun:qafftr}.  To state the relation in Proposition \ref{prop:fv-conj}, we specialize and make explicit Proposition \ref{prop:ek-const} in Proposition \ref{prop:ek-const-2}.  

\begin{prop}[{\cite[Theorem 11.1]{EK3}}] \label{prop:ek-const-2}
There is a function $f_{2, 2}(q, q^{-2\omega})$ with unit constant term whose formal power series expansion in $q^{-2\omega}$ has rational function coefficients in $q$ such that 
\[
\chi_{0, 0, 2}(q, \lambda, \omega) = f_{2, 2}(q, q^{-2\omega}) q^\lambda (q^{-2\lambda + 2}; q^{-2\omega})(q^{2\lambda + 2} q^{-2\omega}; q^{-2\omega}) (q^{-2\omega + 2}; q^{-2\omega}).
\]
\end{prop}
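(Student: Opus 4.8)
The plan is to obtain Proposition~\ref{prop:ek-const-2} directly as the $n = 2$, $\mk = 2$ case of Proposition~\ref{prop:ek-const}, the only content being to write out the product over affine positive roots explicitly. The structural assertions on $f_{2,2}(q, q^{-2\omega})$---unit constant term, and a formal power series expansion in $q^{-2\omega}$ with rational-function coefficients in $q$---are precisely those guaranteed by Proposition~\ref{prop:ek-const}, so we simply take $f_{2,2}$ to be the function produced there, and nothing new needs to be proved about it.

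Setting $n = 2$ and $\mk = 2$, the product $\prod_{i=1}^{\mk-1}$ reduces to the single index $i = 1$, so Proposition~\ref{prop:ek-const} gives
\[
\chi_{0,0,2}(q,\lambda,\omega) = f_{2,2}(q, q^{-2\omega})\, q^{2(\rho,\lambda)} \prod_{\alpha > 0}\bigl(1 - q^{-2(\alpha,\lambda + \omega d) + 2}\bigr),
\]
where $\alpha$ runs over the positive roots of $\asl_2$. In rank one, writing $\lambda$ for the coordinate $(\alpha,\lambda)$ of the finite simple root $\alpha$ (so that $(\rho,\lambda) = \tfrac12\lambda$), the prefactor is $q^{2(\rho,\lambda)} = q^{\lambda}$. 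I would then enumerate the positive roots of $\asl_2$ in three families: $\{m\delta + \alpha : m \geq 0\}$ and $\{m\delta - \alpha : m \geq 1\}$, real of multiplicity one, together with the imaginary roots $\{m\delta : m \geq 1\}$, of multiplicity $n - 1 = 1$; in particular for $n = 2$ every positive root has multiplicity one, so the bare product above already carries the correct multiplicities. Pairing against the spectral parameter via the invariant form on $\whh^*$ (identifying $d$ with $\Lambda_0$) one finds $(m\delta \pm \alpha,\ \lambda + \omega d) = m\omega \pm \lambda$ and $(m\delta,\ \lambda + \omega d) = m\omega$.

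Substituting these values and collecting each of the three resulting geometric progressions into a $q$-Pochhammer symbol yields
\[
\prod_{\alpha > 0}\bigl(1 - q^{-2(\alpha,\lambda+\omega d)+2}\bigr) = (q^{-2\lambda+2}; q^{-2\omega})\,(q^{2\lambda+2}q^{-2\omega}; q^{-2\omega})\,(q^{-2\omega+2}; q^{-2\omega}),
\]
the three factors coming respectively from $m\delta + \alpha$ ($m \geq 0$), $m\delta - \alpha$ ($m \geq 1$), and $m\delta$ ($m \geq 1$); combined with the prefactor $q^{\lambda}$ this is exactly the asserted formula. There is no genuine obstacle here---the statement is bookkeeping over $\asl_2$---and the only points demanding care are fixing the normalization of the pairing $(\alpha, \lambda + \omega d)$ so that the exponents match the claimed Pochhammer arguments, and recording that the multiplicity-one structure of $\asl_2$ makes the plain product of Proposition~\ref{prop:ek-const} already the correct multiplicity-weighted one.
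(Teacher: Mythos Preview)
Your proposal is correct and matches the paper's treatment: the paper simply states Proposition~\ref{prop:ek-const-2} as the specialization of Proposition~\ref{prop:ek-const} to $n=2$, $\mk=2$ (writing ``we specialize and make explicit Proposition~\ref{prop:ek-const}''), with no additional argument beyond the root-by-root bookkeeping you carry out. Your enumeration of the three families of positive roots of $\asl_2$ and the resulting Pochhammer factorization are exactly what is needed.
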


\begin{prop}[{\cite[Theorem 9.9]{Sun:qafftr}}] \label{prop:fv-conj}
For $|q| > 1$, $|q^{-2\omega}| < |q^{-6}|$, and $q^{-2\mu}$ sufficiently close to $0$, the elliptic and affine Macdonald polynomials for $U_q(\asl_2)$ are related by
\begin{multline*}
J_{\mu, k, 2}(q, \lambda, \omega)\\
 = \frac{P_{\mu, \wk}(2\eta\lambda; -2\eta\omega, \eta)}{2\pi f_{2, 2}(q, q^{-2\omega})} \frac{(q^{-4}; q^{-2\omega})(q^{-2\omega}; q^{-2\omega})^3}{(q^{-2\omega + 2}; q^{-2\omega})} \frac{(q^{-2\omega + 2}; q^{-2\omega}, q^{-2\wk})^2}{(q^{-2\omega - 2}; q^{-2\omega}, q^{-2\wk})^2} \frac{q^{\mu + 4}(q^{-2\mu - 6}; q^{-2\wk})(q^{2\mu + 2} q^{-2\wk}; q^{-2\wk})}{(q^{-4}; q^{-2\wk})(q^{-2\wk}; q^{-2\wk})},
\end{multline*}
where $f_{2, 2}(q, q^{-2\omega})$ is the normalizing function of Proposition \ref{prop:ek-const-2} and $\wk = k + 4$.
\end{prop}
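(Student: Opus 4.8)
This is \cite[Theorem 9.9]{Sun:qafftr}, so strictly no proof is needed here; I sketch instead the strategy of that reference, and in particular how the scalar prefactor gets pinned down.

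The plan is to compute the trace function $\chi_{\mu, k, 2}(q, \lambda, \omega)$ directly using the free-field (Wakimoto) bosonization of $U_q(\asl_2)$ in the coproduct of \cite{Mat}. First one realizes the integrable module $L_{\mu + k\Lambda_0 + \wrho}$ inside a bosonic Fock space---more precisely as a subquotient of a Wakimoto module via a (two-sided) resolution---and realizes the intertwiner $\Upsilon^{w_0}_{\mu, k, 2}(z)$, which takes values in $\Sym^{2} V(z) \simeq \CC \cdot w_0$, as a screened product of $q$-vertex operators between such Fock modules. Taking the trace of this operator against $q^{2\lambda + 2\omega d}$ then reduces, by the standard boson-trace technology, to a one-dimensional contour integral in the screening variable whose integrand is a product of elliptic gamma functions and Jacobi theta functions, with one modular parameter governed by $q^{-2\omega}$ (the ``$\tau$'' direction of Section~\ref{sec:fv}) and the other by $q^{-2\wk} = q^{-2(k+4)}$ (the ``$\sigma = -2\eta\kappa$'' direction). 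Comparing contours and integrands identifies this integral, up to scalars, with the Felder--Varchenko hypergeometric theta function $\wDelta_{\mu+2,\wk}(2\eta\lambda; -2\eta\omega, \eta)$ of Proposition~\ref{prop:htf-prop}, under the dictionary $q = e^{2\pi i\eta}$ (up to the usual sign conventions), $\lambda \mapsto 2\eta\lambda$, $\omega \mapsto -2\eta\omega = \tau$, and the level identification $\wk = k+4$: here the first ``$+2$'' is the shift of the highest weight by $(\mk-1)\wrho = \wrho$ with $\mk = 2$, which raises the level from $k$ to $k+2$, and the remaining ``$+2$'' is the dual Coxeter number $n = 2$ (equivalently, the ``level $\kappa+2$'' built into the definition of $\wDelta_{\mu,\kappa}$).

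Next I would match normalizations. The affine Macdonald polynomial is $J_{\mu,k,2} = \chi_{\mu,k,2}/\chi_{0,0,2}$, and by Proposition~\ref{prop:ek-const-2} the denominator equals $f_{2,2}(q,q^{-2\omega})$ times the explicit product $q^{\lambda}(q^{-2\lambda+2}; q^{-2\omega})(q^{2\lambda+2}q^{-2\omega}; q^{-2\omega})(q^{-2\omega+2}; q^{-2\omega})$, whose $\lambda$-dependent factors are, up to an explicit exponential, precisely the three theta factors $\theta_0(\lambda; \tau)$, $\theta_0(\lambda-2\eta; \tau)$, $\theta_0(\lambda+2\eta; \tau)$ (with $\lambda$ replaced by $2\eta\lambda$) appearing in the denominator of $P_{\mu,\wk}$. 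Since the trace is symmetric under $\lambda \mapsto -\lambda$, the asymmetric integral descends to the symmetrized combination $\Delta_{\mu+2,\wk}$, and dividing by $\chi_{0,0,2}$ converts it into $P_{\mu,\wk}(2\eta\lambda; -2\eta\omega, \eta)$ divided by $2\pi f_{2,2}(q,q^{-2\omega})$, times a $\lambda$-independent constant. That constant is the product of: (i) the normalization $q$-Pochhammer symbols in $q^{-2\omega}$ alone produced by the boson trace, including the cube $(q^{-2\omega}; q^{-2\omega})^{3}$ matching the three theta factors and the ratio $(q^{-4}; q^{-2\omega})/(q^{-2\omega+2}; q^{-2\omega})$; (ii) the double $q$-Pochhammer ratio $(q^{-2\omega+2}; q^{-2\omega}, q^{-2\wk})^{2}/(q^{-2\omega-2}; q^{-2\omega}, q^{-2\wk})^{2}$ coming from the elliptic gamma prefactor $\Omega_{2\eta}$ of the integrand; and (iii) the $\mu$-dependent factor $q^{\mu+4}(q^{-2\mu-6}; q^{-2\wk})(q^{2\mu+2}q^{-2\wk}; q^{-2\wk})/\big((q^{-4}; q^{-2\wk})(q^{-2\wk}; q^{-2\wk})\big)$ coming from the $Q(2\eta(\mu+2); -2\eta\wk, \eta)$ factor together with the exponential prefactors in the definitions of $\wDelta_{\mu+2,\wk}$ and $P_{\mu,\wk}$. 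Carefully collecting these yields the displayed identity.

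The main obstacle is the first step: constructing the bosonic formula for the trace and proving it coincides with the Felder--Varchenko integral with all constants correct, together with the attendant convergence analysis. The delicate points are (a) passing from Wakimoto modules to the integrable module, which requires controlling a resolution and showing the extra terms contribute nothing new to the trace; (b) the convergence of the resulting series and integral, which is exactly why one imposes $|q| > 1$, $|q^{-2\omega}| < |q^{-6}|$, and $q^{-2\mu}$ close to $0$---these ensure that the geometric and theta series defining $\wDelta_{\mu+2,\wk}$ converge absolutely and that the stated contour is admissible; and (c) the exact bookkeeping of the scalar prefactor, which rests on several nontrivial identities among single and double $q$-Pochhammer symbols and is the reason the final formula takes the particular form above.
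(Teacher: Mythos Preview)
Your proposal is correct and aligns with the paper's treatment: the paper also states this proposition as a citation of \cite[Theorem 9.9]{Sun:qafftr} without proof, accompanied only by a remark clarifying notational and normalization differences from the published version. Your sketch of the bosonization strategy and the bookkeeping of the scalar prefactor goes beyond what the paper itself provides, and the outline you give is consistent with the method of the cited reference.
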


\begin{remark}
The version of \cite[Theorem 9.9]{Sun:qafftr} in Proposition \ref{prop:fv-conj} differs from the published version in the following ways:
\begin{itemize}
\item there is a normalization error in the published version yielding a multiplicative factor of $2\pi$;

\item our function $f_{2, 2}(q, q^{-2\omega})$ corresponds to the function $f(p, q)$ defined in \cite[Theorem 11.1]{EK3} and is related to the function of \cite[Theorem 9.9]{Sun:qafftr} by $f^{\text{\cite{Sun:qafftr}}}(q, q^{-2\omega}) = f_{2, 2}(q, q^{-2\omega}) (q^{-2\omega + 2}; q^{-2\omega})$;

\item our notation for elliptic Macdonald polynomials is related to that of \cite{Sun:qafftr} by $\wJ_{\mu, \kappa}(q, \lambda, \omega) = P_{\mu, \kappa}(2\eta\lambda; -2\eta \omega, \eta)$, where $q = e^{2\pi i \eta}$;

\item the published condition that $|q^{-2\omega}|$ is sufficiently close to $0$ can be weakened to $|q^{-2\omega}| < |q^{-6}|$, since the cycle in Proposition \ref{prop:htf-prop} may be deformed so that the integrand has no poles with $\Imm(\tau) > -5\Imm(\eta)$, meaning that the expression on the right of Proposition \ref{prop:fv-conj} admits a convergent expansion as a series in $q^{-2\omega}$ with no poles in the region $\{|q^{-2\omega}| < |q^{-6}|\}$.
\end{itemize}
\end{remark}

We now use this connection to determine the multiplicative factor $f_{2, 2}(q, q^{-2\omega})$ in Theorem \ref{thm:f-val} and to establish the first case of the affine Macdonald evaluation conjecture in Theorem \ref{thm:eval-conj}.

\begin{thm} \label{thm:f-val}
For $n = 2$ and $\mk = 2$, the function $f_{2, 2}(q, q^{-2\omega})$ takes the value
\[
f_{2, 2}(q, q^{-2\omega}) = \frac{(q^{-2\omega + 2}; q^{-2\omega})}{(q^{-2\omega + 4}, q^{-2\omega})},
\]
meaning that the affine denominator is given by
\[
\chi_{0, 0, 2}(q, \lambda, \omega) = q^\lambda \frac{(q^{-2\omega + 2}; q^{-2\omega})}{(q^{-2\omega + 4}, q^{-2\omega})} (q^{-2\lambda + 2}; q^{-2\omega}) (q^{2\lambda + 2} q^{-2\omega}; q^{-2\omega}) (q^{-2\omega + 2}; q^{-2\omega}).
\]
\end{thm}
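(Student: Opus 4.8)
The plan is to extract $f_{2,2}(q, q^{-2\omega})$ by specializing Proposition \ref{prop:fv-conj} to $\mu = 0$ and $k = 0$. Since $J_{\mu,k,\mk}(q,\lambda,\omega) = \chi_{\mu,k,\mk}(q,\lambda,\omega)/\chi_{0,0,\mk}(q,\lambda,\omega)$ by definition, we have trivially $J_{0,0,2}(q,\lambda,\omega) \equiv 1$. On the other hand, Proposition \ref{prop:fv-conj} (valid at $\mu = 0$ by analytic continuation in the variable $q^{-2\mu}$, since its stated hypothesis only constrains the size of $q^{-2\mu}$) expresses $J_{0,0,2}(q,\lambda,\omega)$ as $\tfrac{1}{2\pi f_{2,2}(q,q^{-2\omega})}$ times the elliptic Macdonald polynomial $P_{0,4}(2\eta\lambda; -2\eta\omega,\eta)$ (here $\wk = k+4 = 4$) times an explicit product of $q$-Pochhammer symbols, in which the $\mu$-dependent factors collapse: $q^{\mu+4} = q^4$, and both $(q^{-2\mu-6}; q^{-2\wk})$ and $(q^{2\mu+2}q^{-2\wk}; q^{-2\wk})$ equal $(q^{-6}; q^{-8})$. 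Setting this expression equal to $1$ and solving gives a closed formula for $f_{2,2}(q,q^{-2\omega})$.

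Next I would feed in the value of $P_{0,4}$ supplied by Theorem \ref{thm:ellmac-val}. Its key feature is that this value is independent of $\lambda$ — consistent with $J_{0,0,2} \equiv 1$ — so the resulting expression for $f_{2,2}(q,q^{-2\omega})$ is indeed $\lambda$-free, as it must be. At this point $f_{2,2}(q,q^{-2\omega})$ is written as a product of an elliptic gamma ratio $\Gamma(-6\eta; -2\eta\omega,-8\eta)/\Gamma(-2\eta; -2\eta\omega, -8\eta)$, a theta factor $\theta_0(4\eta; -2\eta\omega)$, and single and double $q$-Pochhammer symbols with modular parameters $\tau = -2\eta\omega$ and $-8\eta$.

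The remaining work is bookkeeping. Pass from additive to multiplicative notation via $q = e^{2\pi i\eta}$ (so $q^{-2\omega} = e^{2\pi i\tau}$ and $q^{-8} = q^{-2\wk} = e^{-16\pi i\eta}$), expand the elliptic gammas using $\Gamma^{\mathrm{mult}}(u;q,r) = (u^{-1}qr;q,r)/(u;q,r)$ and the thetas using $\theta_0^{\mathrm{mult}}(u;q) = (u;q)(u^{-1}q;q)$, cancel the factor $(q^{-2\omega};q^{-2\omega})^3$ coming from $P_{0,4}$ against the one already present in Proposition \ref{prop:fv-conj}, split the double Pochhammers into single ones, and collapse the purely-$q$ factors (those not involving $\omega$) using identities such as $(q^{-4};q^{-4}) = (q^{-4};q^{-8})(q^{-8};q^{-8})$ and $(q^{-2};q^{-4}) = (q^{-2};q^{-8})(q^{-6};q^{-8})$. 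Everything should collapse to $(q^{-2\omega+2};q^{-2\omega})/(q^{-2\omega+4};q^{-2\omega})$; as a check, letting $q^{-2\omega} \to 0$ must give $1$, matching the normalization of $f_{2,2}$ fixed in Propositions \ref{prop:ek-const} and \ref{prop:ek-const-2}. Substituting this value of $f_{2,2}(q,q^{-2\omega})$ into Proposition \ref{prop:ek-const-2} then yields the stated formula for $\chi_{0,0,2}(q,\lambda,\omega)$.

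The main obstacle is precisely this final telescoping: tracking which shifts by $q^{\pm 2}, q^{\pm 4}, q^{\pm 6}$ pair with which modular parameter ($q^{-2\omega}$, $q^{-4}$, or $q^{-8}$) as the elliptic gammas and thetas are expanded, and splitting the double Pochhammers correctly so that all the $\omega$-independent debris cancels down to the constant $1$. A secondary point is the legitimacy of evaluating Proposition \ref{prop:fv-conj} at $\mu = 0$, $k = 0$, which I would justify by analytic continuation in $q^{-2\mu}$ together with the fact that all $\mu$-dependence on the right-hand side enters through entire functions of $q^{-2\mu}$.
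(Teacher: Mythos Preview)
Your proposal is correct and follows essentially the same route as the paper: set $J_{0,0,2}=1$, solve Proposition~\ref{prop:fv-conj} for $f_{2,2}$, insert the value of $P_{0,4}$ from Theorem~\ref{thm:ellmac-val}, and simplify the resulting product of gamma, theta, and Pochhammer factors down to $(q^{-2\omega+2};q^{-2\omega})/(q^{-2\omega+4};q^{-2\omega})$. The paper's proof carries out exactly this computation, with the key intermediate simplification
\[
\frac{\Gamma(q^{-6}; q^{-2\omega}, q^{-8})}{\Gamma(q^{-2}; q^{-2\omega}, q^{-8})} \frac{(q^{-2\omega + 2}; q^{-2\omega}, q^{-8})^2}{(q^{-2\omega - 2}; q^{-2\omega}, q^{-8})^2} = \frac{(q^{-2}; q^{-8}) (q^{-2\omega + 2}; q^{-2\omega})^2}{(q^{-6}; q^{-8})},
\]
and does not explicitly address the $\mu=0$ specialization issue you flag --- your remark about analytic continuation in $q^{-2\mu}$ is a reasonable justification that the paper leaves implicit.
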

\begin{proof}
By definition, we have that $J_{0, 0, 2}(q, \lambda, \omega) = 1$, so by Proposition \ref{prop:fv-conj} and Theorem \ref{thm:ellmac-val}, we have that 
\begin{align*}
f_{2, 2}(q, q^{-2\omega}) &= \frac{P_{0, 4}(2\eta\lambda; -2\eta\omega, \eta)}{2\pi} \frac{(q^{-4}; q^{-2\omega})(q^{-2\omega}; q^{-2\omega})^3}{(q^{-2\omega + 2}; q^{-2\omega})} \frac{(q^{-2\omega + 2}; q^{-2\omega}, q^{-8})^2}{(q^{-2\omega - 2}; q^{-2\omega}, q^{-8})^2} \frac{q^{4}(q^{- 6}; q^{-8})^2}{(q^{-4}; q^{-8})(q^{-8}; q^{-8})}\\
&= - \frac{\Gamma(q^{-6}; q^{-2\omega}, q^{-8})}{\Gamma(q^{-2}; q^{-2\omega}, q^{-8})} \frac{(q^{-2\omega + 2}; q^{-2\omega}, q^{-8})^2}{(q^{-2\omega - 2}; q^{-2\omega}, q^{-8})^2} \frac{(q^{-4}; q^{-2\omega})}{\theta_0(q^4; q^{-2\omega}) (q^{-2\omega + 2}; q^{-2\omega})} \frac{q^4 (q^{-6}; q^{-8})^2 (q^{-4}; q^{-4})}{(q^{-4}; q^{-8})(q^{-8}; q^{-8}) (q^{-2}; q^{-4})}.
\end{align*}
Notice now that 
\begin{align*}
\frac{\Gamma(q^{-6}; q^{-2\omega}, q^{-8})}{\Gamma(q^{-2}; q^{-2\omega}, q^{-8})} \frac{(q^{-2\omega + 2}; q^{-2\omega}, q^{-8})^2}{(q^{-2\omega - 2}; q^{-2\omega}, q^{-8})^2} &= \frac{(q^{-2}; q^{-2\omega}, q^{-8}) (q^{-2\omega - 2}; q^{-2\omega}, q^{-8})}{(q^{-6}; q^{-2\omega}, q^{-8})(q^{-2\omega - 6}; q^{-2\omega}, q^{-8})} \frac{(q^{-2\omega + 2}; q^{-2\omega}, q^{-8})^2}{(q^{-2\omega - 2}; q^{-2\omega}, q^{-8})^2}\\
&= \frac{(q^{-2}; q^{-8}) (q^{-2\omega + 2}; q^{-2\omega})^2}{(q^{-6}; q^{-8})}.
\end{align*}
Substituting back in, we obtain
\[
f_{2, 2}(q, q^{-2\omega}) = -\frac{(q^{-2\omega + 2}; q^{-2\omega})^2(q^{-4}; q^{-2\omega})}{(q^{-2\omega + 2}; q^{-2\omega})\theta_0(q^4; q^{-2\omega})} \frac{q^4 (q^{-6}; q^{-8})^2 (q^{-2}; q^{-8})}{(q^{-2}; q^{-4}) (q^{-6}; q^{-8})} = \frac{(q^{-2\omega + 2}; q^{-2\omega})}{(q^{-2\omega + 4}; q^{-2\omega})}. \qedhere
\]
\end{proof}

\begin{thm} \label{thm:eval-conj}
For $|q| > 1$ and $n = 2$, the affine Macdonald polynomial satisfies the evaluation
\[
J_{\mu, k, 2}(q, 2, 4) = q^{2\mu} \frac{(q^{-2}; q^{-2\wk})}{(q^{-4}; q^{-2\wk})} \frac{\theta_0(q^{-2\mu - 4}; q^{-2\wk}) (q^{-2\mu - 6}; q^{-2\wk})(q^{2\mu + 2} q^{-2\wk}; q^{-2\wk})(q^{-2\wk}; q^{-2\wk}) (q^{-2\wk - 2}; q^{-2\wk})}{(q^{-4}; q^{-2})(q^{-6};q^{-8}) (q^{-2}; q^{-8})}.
\]
\end{thm}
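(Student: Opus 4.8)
The plan is to deduce the evaluation from Proposition~\ref{prop:fv-conj} together with the explicit values already obtained in Theorems~\ref{thm:f-val} and~\ref{thm:ellmac-eval}. First I would specialize Proposition~\ref{prop:fv-conj} to $\lambda = 2$ and $\omega = 4$. Under this choice the elliptic Macdonald polynomial on the right-hand side is $P_{\mu, \wk}(2\eta \cdot 2; -2\eta \cdot 4, \eta) = P_{\mu, \wk}(4\eta; -8\eta, \eta)$, which is precisely the special value evaluated in Theorem~\ref{thm:ellmac-eval} with $\kappa = \wk = k + 4$; note also that the hypothesis $|q^{-2\omega}| < |q^{-6}|$ of Proposition~\ref{prop:fv-conj} is implied by $|q| > 1$ at $\omega = 4$. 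Simultaneously, the normalizing function $f_{2,2}(q, q^{-2\omega})$ is evaluated at $q^{-2\omega} = q^{-8}$ via Theorem~\ref{thm:f-val}, giving $f_{2,2}(q, q^{-8}) = (q^{-6}; q^{-8})/(q^{-4}; q^{-8})$.

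After these substitutions the right-hand side is an explicit product of elliptic gamma functions, theta functions, and single and double $q$-Pochhammer symbols, and the remainder of the argument is bookkeeping. I would translate the output of Theorem~\ref{thm:ellmac-eval} from additive to multiplicative notation via $q = e^{2\pi i \eta}$, collecting the prefactor $e^{-12\pi i \eta - 2\pi i (\mu + 2)\eta}$ into the power $q^{-\mu - 8}$ and rewriting $\theta_0(2(\mu+2)\eta; -2\wk\eta)$ as $\theta_0(q^{2\mu + 4}; q^{-2\wk})$. The latter is brought to the form $\theta_0(q^{-2\mu - 4}; q^{-2\wk})$ using the quasi-symmetry $\theta_0(u^{-1}; q) = -u^{-1}\theta_0(u; q)$, which produces a factor $q^{2\mu + 4}$ together with a sign that cancels the sign in Theorem~\ref{thm:ellmac-eval}; combined with the prefactor $q^{-\mu - 8}$ and the factor $q^{\mu + 4}$ already present in Proposition~\ref{prop:fv-conj}, this yields the leading $q^{2\mu}$. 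The elliptic gamma factors $\Gamma(q^{-6}; q^{-2\wk}, q^{-8})/\Gamma(q^{-2}; q^{-2\wk}, q^{-8})$ are expanded via $\Gamma(u; q, r) = (u^{-1}qr; q, r)/(u; q, r)$ and combined with the double Pochhammer factors $(q^{-6}; q^{-8}, q^{-2\wk})^2/(q^{-10}; q^{-8}, q^{-2\wk})^2$ from Proposition~\ref{prop:fv-conj}; collapsing each resulting double Pochhammer to a single one when one of its two moduli drops out (using peeling identities of the form $(uq^m; q) = (u; q)/\prod_{0 \le j < m}(1 - uq^j)$) eliminates all elliptic gamma and double Pochhammer symbols.

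Finally I would reduce the remaining product of single $q$-Pochhammer symbols to the claimed closed form using standard identities, chiefly the doublings $(q^{-4}; q^{-4}) = (q^{-4}; q^{-8})(q^{-8}; q^{-8})$ and $(q^{-2}; q^{-2}) = (1 - q^{-2})(q^{-4}; q^{-2})$ and the shift $(q^{-2\wk - 2}; q^{-2\wk}) = (q^{-2}; q^{-2\wk})/(1 - q^{-2})$; a short computation shows that the prefactor built from $(q^{-4}; q^{-8})$, $(q^{-8}; q^{-8})$, $(q^{-4}; q^{-4})$, and $(q^{-2}; q^{-2})$ collapses to $1$, leaving exactly the right-hand side of the theorem. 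The only non-routine point is that Proposition~\ref{prop:fv-conj} is stated under the assumption that $q^{-2\mu}$ is sufficiently close to $0$; since both sides of the asserted identity are specializations of meromorphic functions of $q^{-2\mu}$ agreeing on a nonempty open set, the identity extends to all admissible $\mu$. I expect the main obstacle to be purely the $q$-Pochhammer bookkeeping, since all the genuinely analytic content is already packaged in Theorems~\ref{thm:f-val} and~\ref{thm:ellmac-eval} (and ultimately in the integral evaluation of Theorem~\ref{thm:eval3}).
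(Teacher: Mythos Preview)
Your proposal is correct and follows essentially the same route as the paper: specialize Proposition~\ref{prop:fv-conj} at $(\lambda,\omega)=(2,4)$, insert the value of $f_{2,2}$ from Theorem~\ref{thm:f-val} and of $P_{\mu,\wk}(4\eta;-8\eta,\eta)$ from Theorem~\ref{thm:ellmac-eval}, and then simplify the resulting product of gamma, theta, and Pochhammer factors. Your handling of the sign and $q$-powers via $\theta_0(u^{-1};q)=-u^{-1}\theta_0(u;q)$ and your remark on extending past the ``$q^{-2\mu}$ sufficiently small'' hypothesis by meromorphic continuation are both fine; the paper performs the same substitutions and collapses the double Pochhammer and elliptic gamma terms in the same way.
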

\begin{proof}
Noting that $|q^{-8}| < |q^{-6}|$ for $|q| > 1$, we may apply Proposition \ref{prop:fv-conj} and Theorem \ref{thm:f-val} to find that
\[
J_{\mu, k, 2}(q, 2, 4) = \frac{P_{\mu, \wk}(4\eta; -8\eta, \eta)}{2\pi} \frac{\theta_0(q^{-4}; q^{-8})(q^{-8}; q^{-8})^3}{(q^{-6}; q^{-8})^2} \frac{(q^{-6}; q^{-8}, q^{-2\wk})^2}{(q^{-10}; q^{-8}, q^{-2\wk})^2}\frac{q^{\mu + 4}(q^{-2\mu - 6}; q^{-2\wk})(q^{2\mu + 2} q^{-2\wk}; q^{-2\wk})}{(q^{-4}; q^{-2\wk})(q^{-2\wk}; q^{-2\wk})}.
\]
In multiplicative notation, the result of Theorem \ref{thm:ellmac-eval} states that
\[
P_{\mu, \wk}(4\eta; -8\eta, \eta) = -2 \pi q^{-\mu - 8} \frac{\Gamma(q^{-6}; q^{-2\wk}, q^{-8})}{\Gamma(q^{-2}; q^{-2\wk}, q^{-8})} \frac{\theta_0(q^{2\mu + 4}; q^{-2\wk}) (q^{-2\wk}; q^{-2\wk})^2}{(q^{-8}; q^{-8})(q^{-4}; q^{-4})^2 (q^{-2}; q^{-2})}.
\]
Substituting in, we find that 
\begin{align*}
J_{\mu, k, 2}(q, 2, 4) &= - q^{- 4} \theta_0(q^{2\mu + 4}; q^{-2\wk}) (q^{-2\mu - 6}; q^{-2\wk})(q^{2\mu + 2} q^{-2\wk}; q^{-2\wk}) \frac{\Gamma(q^{-6}; q^{-2\wk}, q^{-8})}{\Gamma(q^{-2}; q^{-2\wk}, q^{-8})}\frac{(q^{-6}; q^{-8}, q^{-2\wk})^2}{(q^{-10}; q^{-8}, q^{-2\wk})^2}\\
&\phantom{==} \frac{(q^{-2\wk}; q^{-2\wk})}{(q^{-4}; q^{-4})^2 (q^{-2}; q^{-2})} \frac{\theta_0(q^{-4}; q^{-8})(q^{-8}; q^{-8})^2}{(q^{-6}; q^{-8})^2(q^{-4}; q^{-2\wk})}.
\end{align*}
Observe now that 
\begin{align*}
\frac{\Gamma(q^{-6}; q^{-2\wk}, q^{-8})}{\Gamma(q^{-2}; q^{-2\wk}, q^{-8})}\frac{(q^{-6}; q^{-8}, q^{-2\wk})^2}{(q^{-10}; q^{-8}, q^{-2\wk})^2} &= \frac{(q^{-6}; q^{-2\wk}, q^{-8}) (q^{-2\wk - 2}; q^{-2\wk}, q^{-8}) (q^{-2}; q^{-2\wk}, q^{-8})}{(q^{-2\wk - 6}; q^{-2\wk}, q^{-8})(q^{-10}; q^{-2\wk}, q^{-8})^2}\\
&= (q^{-6}; q^{-8}) \frac{(q^{-2}; q^{-2\wk})^2}{(q^{-2}; q^{-8})},
\end{align*}
which implies that 
\begin{multline*}
J_{\mu, k, 2}(q, 2, 4) = q^{2\mu}\frac{(q^{-2}; q^{-2\wk})}{(q^{-4}; q^{-2\wk})}\\ \frac{\theta_0(q^{-2\mu - 4}; q^{-2\wk}) (q^{-2\mu - 6}; q^{-2\wk})(q^{2\mu + 2} q^{-2\wk}; q^{-2\wk})(q^{-2\wk}; q^{-2\wk}) (q^{-2\wk - 2}; q^{-2\wk})}{(q^{-4}; q^{-2})(q^{-6};q^{-8}) (q^{-2}; q^{-8})}. \qedhere
\end{multline*}
\end{proof}

\subsection{Limits of the conjectures} \label{sec:conj-limits}

In this section we discuss three degenerations of our affine Macdonald polynomials and the known or conjectured relations of our conjectures to results for other known objects.  Affine Macdonald theory has a four-dimensional space of parameters, with coordinates given by $q$, $t = q^\mk$, $k$, and $\omega$, and each degeneration will be to an integrable system with a three-dimensional space of parameters.  In particular, in the classical, affine Hall, and critical limits, we recover relations to the affine Jack, affine Hall, and elliptic Macdonald-Ruijsenaars integrable systems.

\subsubsection{The classical limit}

In the classical limit $q \to 1$, the affine Macdonald polynomials become the affine Jack polynomials defined in \cite{EK3} in terms of traces of intertwiners between irreducible integrable modules for the classical affine algebra.  More precisely, let $L^\cl_{\mu + k\Lambda_0}$ denote the irreducible integrable module for $U_q(\asl_n)$ with highest weight $\mu + k \Lambda_0$ and highest weight vector $v^\cl_{\mu + k \Lambda_0}$, and let $V^\cl$ denote the dimension $n$ irreducible fundamental representation.  For $v \in \Sym^{n(\mk - 1)} V [0]$, there is a unique classical $U_q(\asl_n)$-intertwiner
\[
\Upsilon^{v, \cl}_{\mu, k, \mk}(z): L^\cl_{\mu + k \Lambda_0 + (\mk - 1) \wrho} \to L^\cl_{\mu + k\Lambda_0 + (\mk - 1)\wrho} \hotimes \Sym^{n (\mk - 1)}V (z)
\]
such that $\Upsilon^{v, \cl}_{\mu, k, \mk}(z) v^\cl_{\mu + k \Lambda_0 + (\mk - 1)\wrho} = v^\cl_{\mu + k \Lambda_0 + (\mk -1 )\wrho} \otimes v + (\text{l.o.t.})$.  Fixing a choice of $w_0 \in \Sym^{n(\mk - 1)}V[0]$ and identifying $\Sym^{n(\mk - 1)}V[0]\simeq \CC \cdot w_0$, we define the classical trace function by
\[
\chi^\cl_{\mu, k, \mk}(\Lambda, \Omega) := \Tr|_{L^\cl_{\mu + k \Lambda_0 + (\mk - 1)\wrho}}\Big(\Upsilon^{w_0, \cl}_{\mu, k, \mk}(z) e^{2\Lambda + 2\Omega d}\Big),
\]
where the trace is independent of $z$.  In \cite{EK3}, the affine Jack polynomial with parameter $\mk$ was defined by
\[
J^\cl_{\mu, k, \mk}(\Lambda, \Omega) := \frac{\chi^\cl_{\mu, k, \mk}(\Lambda, \Omega)}{\chi^\cl_{0, 0, \mk}(\Lambda, \Omega)}.
\]
It is the classical limit of the affine Macdonald polynomial in the sense that
\[
J^\cl_{\mu, k, \mk}(\Lambda, \Omega) = \lim_{\eps \to 0} J_{\mu, k, \mk}(e^\eps, \eps^{-1} \Lambda, \eps^{-1} \Omega).
\]
This limit of Conjecture \ref{conj:f-val-conj} was proven in \cite[Theorem 7.5]{EK3}, as the correction term $f_{n, \mk}(q, q^{-2\omega})$ becomes $1$ in the limit.  For Conjecture \ref{conj:f-val-conj}, the corresponding classical limit is more singular.  The conjectured value in Conjecture \ref{conj:aff-eval} has a singular limit as $q \to 1$, and the conjecture implies that the asymptotics of this limit should match the asymptotics of the evaluation $J^\cl_{\mu, k, \mk}(\eta \cdot \mk \rho, \eta \cdot \mk n)$ of the affine Jack polynomial as $\eta \to 0$.  In terms of the integral formulas for the affine Jack polynomials from \cite[Theorem 3.2]{FSV}, this becomes a statement about asymptotics of integrals.  In the case $n = 2$ and $\mk = 2$, we believe this can be verified using the method of steepest descent.

\subsubsection{The affine Hall limit}

In this section we discuss a limit which is consistent with the form of the conjectured affine denominator in Conjecture \ref{conj:f-val-conj}.  In the context of affine root systems, Macdonald studied a factor similar to $\Delta_{\mk}(q, q^{-2\omega})$ from (\ref{eq:delta-def}) in \cite{Mac03}; more precisely, he proved the following identity.

\begin{prop}[{\cite{Mac03}}] \label{prop:mac-identity}
If $W_\aff$ is the affine Weyl group of $\asl_n$, we have that 
\[
\Delta^\text{Mac}(t, p) := \frac{(p^2t^{2n}; p^2)}{(p^2t^2; p^2)} = \frac{1}{W_\aff(t^{2})} \sum_{w \in W_\aff} w \cdot \left(\frac{\prod_{\alpha > 0} (1 - t^{2(\alpha, \lambda) + 2} p^{2(\alpha, d)})}{\prod_{\alpha > 0} (1 - t^{2(\alpha, \lambda)} p^{2(\alpha, d)})}\right)^{\mult(\alpha)},
\]
where
\[
W_\aff(t) := \sum_{w \in W_\aff} t^{\ell(w)}.
\]
\end{prop}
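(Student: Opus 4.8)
The plan is to recognize the right-hand side as a Poincaré-series-weighted average over the affine Weyl group $W_\aff$ of a product over positive roots, and to evaluate it by the standard ``constant term'' or residue technique that appears throughout Macdonald's work on affine root systems. First I would set $F(\lambda) := \prod_{\alpha > 0}\bigl(\tfrac{1 - t^{2(\alpha,\lambda)+2} p^{2(\alpha,d)}}{1 - t^{2(\alpha,\lambda)} p^{2(\alpha,d)}}\bigr)^{\mult(\alpha)}$ and observe that, because the affine root system $\asl_n$ decomposes its positive roots into the finite part together with the shifts $\alpha + k\delta$ for real $\alpha$ and $k \geq 1$, plus the imaginary roots $k\delta$ with multiplicity $n-1$, the quotient $F(\lambda)$ is essentially the ratio of two affine Weyl denominators evaluated at shifted arguments. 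The key point is that $\frac{1}{W_\aff(t^2)}\sum_{w \in W_\aff} w\cdot F(\lambda)$ is independent of $\lambda$ (one checks the summand transforms correctly under $W_\aff$ so that the average is $W_\aff$-invariant and, being a bounded-degree object in the appropriate completed group algebra, must be constant), so it suffices to compute the limit as $\lambda$ tends to a generic point where everything degenerates.

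The key steps, in order: (1) record the decomposition of positive roots of $\asl_n$ and the multiplicities, writing $F(\lambda)$ as a genuine product and isolating the imaginary-root contribution, which is exactly $\bigl(\tfrac{1-t^2 p^{2(\delta,d)}\cdots}{\cdots}\bigr)$-type and does not depend on the Weyl group action since $\delta$ is $W_\aff$-fixed; (2) apply Macdonald's constant-term formula for affine root systems (the ``$\sum_{w} w \cdot (\text{denominator ratio}) = W_\aff \times \text{constant}$'' identity, which is the affine analogue of the finite Macdonald constant term computed via Weyl's denominator formula) to the real-root part; (3) identify the resulting constant: the real-root contributions telescope against the Poincaré series $W_\aff(t^2)$, leaving behind precisely the imaginary-root factor, which after collecting the $n-1$ copies of each $k\delta$ and summing the geometric-type products gives $\tfrac{(p^2 t^{2n}; p^2)}{(p^2 t^2; p^2)}$; (4) conclude by matching with the stated $\Delta^{\text{Mac}}(t,p)$.

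I expect the main obstacle to be step (2) and (3): correctly bookkeeping how the affine Weyl group action permutes the real affine roots and verifying that the sum over $W_\aff$ of the real-root denominator ratio equals $W_\aff(t^2)$ times $1$ (rather than some other constant), which requires either invoking Macdonald's affine constant term theorem in precisely the normalization used here or reproving it via a careful induction on the length filtration of $W_\aff$ together with the Bruhat-order/length generating function. A secondary subtlety is convergence: the sum over the infinite group $W_\aff$ must be interpreted as a formal power series in $p$ (equivalently $q^{-2\omega}$) with coefficients rational in $t$, and one must check that term-by-term manipulations are legitimate in that completed ring; since $|p| < 1$ this is routine but should be stated. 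Once the real-root part is handled, extracting the imaginary-root factor and recognizing it as $(p^2 t^{2n};p^2)/(p^2 t^2; p^2)$ is a short direct computation.
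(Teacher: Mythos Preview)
The paper does not prove this proposition at all; it is simply quoted from Macdonald's work \cite{Mac03} and used as input. So there is no ``paper's own proof'' to compare against.

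That said, your sketch contains a genuine gap in step (3). You assert that after factoring out the imaginary-root contribution (which is indeed $W_\aff$-invariant since $\delta$ is fixed), the averaged real-root part equals $1$, so that the final answer is exactly the imaginary-root factor. But compute that factor directly: the imaginary positive roots of $\asl_n$ are $k\delta$ for $k \geq 1$, each with multiplicity $n-1$, and $(k\delta, \lambda) = 0$, $(k\delta, d) = k$. Hence the imaginary-root part of $F(\lambda)$ is
\[
\prod_{k \geq 1} \left(\frac{1 - t^{2} p^{2k}}{1 - p^{2k}}\right)^{n-1} = \frac{(p^2 t^2; p^2)^{n-1}}{(p^2; p^2)^{n-1}},
\]
which is \emph{not} equal to $(p^2 t^{2n}; p^2)/(p^2 t^2; p^2)$ (check $n=2$: the former is $(p^2 t^2; p^2)/(p^2; p^2)$, the latter $(p^2 t^4; p^2)/(p^2 t^2; p^2)$). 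So the real-root average over $W_\aff$ cannot be $1$; it must contribute the nontrivial ratio
\[
\frac{(p^2 t^{2n}; p^2)\,(p^2; p^2)^{n-1}}{(p^2 t^2; p^2)^{n}}.
\]
Your plan therefore reduces the problem to precisely the hard step --- evaluating the real-root $W_\aff$-average --- without supplying any mechanism for it beyond invoking ``Macdonald's affine constant term theorem,'' which is essentially the statement you are trying to prove. The actual proof in \cite{Mac03} (and Cherednik's general proof of the Macdonald constant term conjectures) does not proceed by separating real and imaginary roots and reading off the imaginary part; it requires either Cherednik's double affine Hecke algebra machinery or a delicate shift-operator/recursion argument in the parameter $t$.
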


The factor $\Delta^\text{Mac}(t, p)$ has recently appeared as a normalization factor in the affine Satake isomorphism and affine Gindikin-Karpelevich formula in the works \cite{BFK12, BK13, BGKP14, BKP16} of Braverman, Finkelberg, Garland, Kazhdan, and Patnaik.  These works involved the study of affine Kac-Moody groups and should correspond to the limit of affine Macdonald polynomials to affine Hall-Littlewood polynomials.  By the general philosophy of Macdonald theory, the resulting expressions are therefore expected to be related to the limit of our affine Macdonald theory as $q \to 0$ and $q^\mk$ and $q^{-2\omega}$ are held constant.  Indeed, notice that for
\[
\Delta(q, t, p) := \frac{(p^2 q^2; p^2, q^{2})}{(p^2 t^2; p^2, q^{2})} \frac{(p^2 t^{2n}; p^2, q^{2n})}{(p^2 q^{2n}; p^2, q^{2n})},
\]
we have that $\Delta_\mk(q, q^{-2\omega}) = \Delta(q, q^\mk, q^{-\omega})$.  The desired limit transition is therefore
\[
\lim_{q \to 0} \Delta(q, t, p) = \frac{(p^2 t^{2n}; p^2)}{(p^2 t^2; p^2)} = \Delta^\text{Mac}(t, p),
\]
providing a link between our correction factor and the affine Hall case.

\subsubsection{The critical limit}

Noting that the dual Coxeter number for $U_q(\asl_n)$ is given by $n$, the critical limit corresponds to $\kappa = k + \mk n \to 0$; in particular, if $\mk = 1$, we recover the critical level $k \to -n$. In the classical limit $q \to 1$, the $q$-KZB heat equation becomes the classical KZB heat equation, which at the critical level reduces to the elliptic Calogero-Sutherland integrable system.  It was shown in \cite{EK2} that this limit preserves eigenfunctions in the $\asl_2$ case, meaning that the relevant limit transition sends trace functions for Verma modules for $\asl_2$ to eigenfunctions of the classical Lam\'e operator.  

In the full $q$-deformed setting, we expect the $q$-KZB heat equation to degenerate to the elliptic Macdonald-Ruijsenaars integrable system.  We may also check that in the case of $U_q(\asl_2)$, the critical limit preserves eigenfunctions.  That is, by applying the steepest descent method to the integral formula for Felder-Varchenko functions, we find that the limit transition sends trace functions for Verma modules for $U_q(\asl_2)$ to the eigenfunctions for the $q$-Lam\'e integrable system obtained via algebraic Bethe ansatz by Felder-Varchenko in \cite{FV5, FV-ruij}.

In both the classical and $q$-deformed settings, the critical limit becomes significantly more complicated in the symmetrized setting of affine Jack and affine Macdonald polynomials, and we have not yet been able to identify the limiting object in terms of the elliptic Calogero and Macdonald-Ruijsenaars integrable systems.  Once such a connection is made, the limit of Conjecture \ref{conj:aff-eval} would be a special value identity for certain eigenfunctions related to these systems.

\bibliographystyle{alpha}
\bibliography{spec-val-bib}
\end{document}